\newtheorem{lemma}{Lemma}[section]
\newtheorem{thm}{Theorem}[section]
\def\text#1{\mbox{\rm #1}}
\def\T{\mathcal{T}}
\def\T{{ \mathrm{\scriptscriptstyle T} }}
\def\H{{ \mathrm{\scriptscriptstyle H} }}
\def\n{{ \mathcal{N} }}
\def\cn{{ \mathcal{CN} }}
\newcommand{\argmin}{\mathop{\rm argmin}}
\newcommand{\norm}[1]{\left\|{#1} \right\|}
\newcommand{\wh}{\widehat}
\newcommand{\wt}{\widetilde}
\newcommand{\fnorm}[1]{\|#1\|_{\rm F}}
\newcommand{\opnorm}[1]{\|#1\|_{\rm op}}
\newcommand{\Tr}{\mathop{\sf Tr}}
\newcommand{\diag}{\mathop{\text{diag}}}
\newtheorem*{condb'}{Condition B'}
\newcommand{\br}[1]{\left( #1 \right)}
\newcommand{\abs}[1]{\left| #1 \right|}
\newcommand{\im}{{\rm Im}}
\newcommand{\re}{{\rm Re}}
\newcommand{\diff}{{\rm d}}
\title{SDP Achieves Exact Minimax Optimality in Phase Synchronization
}
\author[1]{Chao Gao}
\author[2]{Anderson Y. Zhang}
\affil[1]{
University of Chicago
}
\affil[2]{
University of Pennsylvania
}
\begin{document}
\maketitle

\begin{abstract}
We study the phase synchronization problem with noisy measurements $Y=z^*z^{*\H}+\sigma W\in\mathbb{C}^{n\times n}$, where $z^*$ is an $n$-dimensional complex unit-modulus vector and $W$ is a complex-valued Gaussian random matrix. It is assumed that each entry $Y_{jk}$ is observed with probability $p$. We prove that an SDP relaxation of the MLE achieves the error bound $(1+o(1))\frac{\sigma^2}{2np}$ under a normalized  squared $\ell_2$ loss. This result matches the minimax lower bound of the problem, and even the leading constant is sharp. The analysis of the SDP is based on an equivalent non-convex programming whose solution can be characterized as a fixed point of the generalized power iteration lifted to a higher dimensional space. This viewpoint unifies the proofs of the statistical optimality of three different methods: MLE, SDP, and generalized power method. The technique is also applied to the analysis of the SDP for $\mathbb{Z}_2$ synchronization, and we achieve the minimax optimal error $\exp\left(-(1-o(1))\frac{np}{2\sigma^2}\right)$ with a sharp constant in the exponent.

\smallskip

\end{abstract}


\section{Introduction}

Consider the problem of phase synchronization \citep{singer2011angular} with observations
\begin{equation}
Y_{jk}=z_j^*\bar{z}_k^*+\sigma W_{jk}\in\mathbb{C},\label{eq:basic-model}
\end{equation}
for $1\leq j<k\leq n$, where $\bar{z}_k^*$ stands for the complex conjugate of $z_k^*$. Our goal is to estimate $z_1^*,\cdots,z_n^*\in\mathbb{C}_1=\{x\in\mathbb{C}:|x|=1\}$. Since $|z_j^*|=1$, we can write $z_j^*=e^{i\theta_j^*}$ with some $\theta_j^*\in(0,2\pi]$ for all $j\in[n]$, and thus $Y_{jk}$ is understood to be a noisy observation of the pairwise difference between two angles $\theta_j^*$ and $\theta_k^*$. Following \cite{abbe2017group, bandeira2017tightness, gao2020multi, zhong2018near}, we consider an additive noise model and we assume that $W_{jk}$ is a standard complex Gaussian variable independently for all $1\leq j<k\leq n$.\footnote{For $W_{jk}\sim \cn(0,1)$, we have $\re(W_{jk})\sim \n\left(0,\frac{1}{2}\right)$ and $\im(W_{jk})\sim \n\left(0,\frac{1}{2}\right)$ independently.} 

Recently, minimax risk of estimating $z^*\in\mathbb{C}_1^n$ has been studied by \cite{gao2020exact} under the loss function
\begin{equation}
\ell(\wh{z},z)=\min_{a\in\mathbb{C}_1}\frac{1}{n}\sum_{j=1}^n|\wh{z}_j-z_ja|^2. \label{eq:loss-basic-z}
\end{equation}
We note that the minimization of $a\in\mathbb{C}_1^n$ in the definition of (\ref{eq:loss-basic-z}) is necessary, since a global rotation of the angles $\theta_1^*,\cdots,\theta_n^*$ does not change the distribution of the observations $\{Y_{jk}\}_{1\leq j<k\leq n}$. It was proved by \cite{gao2020exact} that the minimax risk of phase synchronization has the following lower bound
\begin{equation}
\inf_{\wh{z}\in\mathbb{C}_1^n}\sup_{z\in\mathbb{C}_1^n}\mathbb{E}_z\ell(\wh{z},z)\geq (1-o(1))\frac{\sigma^2}{2n}, \label{eq:lower-intro}
\end{equation}
and the maximum likelihood estimator (MLE), defined as a global maximizer of
\begin{equation}
\max_{z\in\mathbb{C}_1^n}z^{\H}Yz, \label{eq:basic-MLE}
\end{equation}
is proved to achieve the error bound $(1+o(1))\frac{\sigma^2}{2n}$, and is therefore asymptotically minimax optimal. However, the optimization problem (\ref{eq:basic-MLE}) is a  constrained quadratic programming that is generally known to be NP-hard. This motivates researchers to consider a convex relaxation of (\ref{eq:basic-MLE}) in the form of semi-definite programming (SDP) \citep{singer2011angular,bandeira2014open,bandeira2017tightness,zhong2018near,ling2020solving}. Write $Z=zz^{\H}$. For any $z\in\mathbb{C}_1^n$, $Z$ is a complex positive-semidefinite Hermitian matrix whose diagonal entries are all one. The SDP relaxation of (\ref{eq:basic-MLE}) is then defined as
\begin{equation}
\max_{Z=Z^{\H}\in\mathbb{C}^{n\times n}}\Tr(YZ)\quad\text{subject to}\quad \diag(Z)=I_n\text{ and }Z\succeq 0. \label{eq:SDP-intro}
\end{equation}
A global maximizer of (\ref{eq:SDP-intro}), denoted as $\wh{Z}$, can thus be used as an estimator of the matrix $z^*z^{*\H}$.
The tightness of the SDP (\ref{eq:SDP-intro}) has been thoroughly investigated in the literature of phase synchronization. When $\sigma^2=O(n^{1/2})$, it was proved by \cite{bandeira2017tightness} that the solution to (\ref{eq:SDP-intro}) is a rank-one matrix $\wh{Z}=\wh{z}\wh{z}^{\H}$, with $\wh{z}$ being a global maximizer of (\ref{eq:basic-MLE}). This result was recently proved by \cite{zhong2018near} to hold under a weaker condition $\sigma^2=O\left(\frac{n}{\log n}\right)$. Given the tightness of the SDP and the minimax optimality of the MLE in \cite{gao2020exact}, we can immediately claim that the SDP (\ref{eq:SDP-intro}) is also asymptotically minimax optimal under the condition $\sigma^2=O\left(\frac{n}{\log n}\right)$. Without the condition $\sigma^2=O\left(\frac{n}{\log n}\right)$, whether SDP is still statistically optimal remains as an open question in the literature.

In this paper, we study the statistical properties of the SDP (\ref{eq:SDP-intro}) directly without the need to establish any connection between SDP and MLE. This allows us to go beyond the condition $\sigma^2=O\left(\frac{n}{\log n}\right)$ and we are able to derive sharp statistical error bounds of the SDP (\ref{eq:SDP-intro}) as long as $\sigma^2=o(n)$. According to the minimax lower bound (\ref{eq:lower-intro}), the condition $\sigma^2=o(n)$ is necessary for any estimator to have an error rate of a nontrivial order. To formally state our main result, we introduce a more general statistical estimation setting that allows the possibility of missing data. Instead of observing $Y_{jk}$ for all $1\leq j<k\leq n$, we assume each $Y_{jk}$ is observed with probability $p$. In other words, consider a random graph $A_{jk}\sim \text{Bernoulli}(p)$ independently for all $1\leq j<k\leq n$, and we only observe $Y_{jk}$ that follows (\ref{eq:basic-model}) when $A_{jk}=1$. The SDP can be extended to this more general setting by replacing all $Y_{jk}$'s with $A_{jk}Y_{jk}$'s in (\ref{eq:SDP-intro}). The formula will be given by (\ref{eq:SDP-complex-p}) in Section \ref{sec:main}.

\begin{thm}\label{thm:intro}
Assume $\frac{np}{\sigma^2}\rightarrow\infty$ and $\frac{np}{\log n}\rightarrow\infty$. Let $\wh{Z}$ be a global maximizer of the SDP (\ref{eq:SDP-complex-p}) and $\wh{z}_j=u_j/|u_j|$ for $j\in[n]$ with $u\in\mathbb{C}^n$ being the leading eigenvector of $\wh{Z}$. There exists some $\delta=o(1)$ such that
\begin{eqnarray*}
\frac{1}{n^2}\fnorm{\wh{Z}-z^*z^{*\H}}^2 \leq (1+\delta)\frac{\sigma^2}{np}, \\
\ell(\wh{z},z^*)\leq (1+\delta)\frac{\sigma^2}{2np},
\end{eqnarray*}
with probability at least $1-n^{-8}-\exp\left(-\left(\frac{np}{\sigma^2}\right)^{1/4}\right)$.
\end{thm}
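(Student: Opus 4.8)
The plan is to avoid any detour through the MLE and instead analyze the SDP solution $\wh Z$ directly via its first-order optimality conditions. Since $\wh Z$ is a global maximizer of a linear objective over the spectrahedron $\{Z=Z^\H: \diag(Z)=I_n,\ Z\succeq 0\}$, standard SDP duality gives the existence of a diagonal dual matrix $\wh\Lambda$ such that the complementary-slackness / stationarity relation $(\wh\Lambda - A\circ Y)\wh Z = 0$ holds with $\wh\Lambda - A\circ Y\succeq 0$, where $A\circ Y$ denotes the observed (masked) data matrix appearing in (\ref{eq:SDP-complex-p}). The key structural observation I would exploit is the one advertised in the abstract: this KKT system says exactly that $\wh Z$ is a fixed point of a ``lifted generalized power iteration'' — each column of $\wh Z$ is, after normalization by the corresponding diagonal dual entry, the image of $\wh Z$ under $A\circ Y$. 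Concretely, writing $\wh\Lambda = \diag(\wh d)$, we get $A\circ Y\,\wh Z = \wh d\,\wh Z$ in the sense that $(A\circ Y\,\wh Z)_{jk} = \wh d_j \wh Z_{jk}$, and also $\wh d_j = (A\circ Y\,\wh Z)_{jj}\geq 0$. This is the non-convex reformulation referenced in the excerpt, and I would first establish this equivalence (or invoke it if it is proved in the preceding omitted section).

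Second, I would set up the error decomposition. Write $G = A\circ Y = p\, z^*z^{*\H} + \Delta$, where $\Delta$ collects the centered graph randomness $(A_{jk}-p)z_j^*\bar z_k^*$ and the noise term $\sigma A_{jk} W_{jk}$ (plus diagonal corrections). The two quantities that control everything are the operator norm $\opnorm{\Delta}$ and a certain $\ell_\infty\to\ell_2$-type row-wise bound on $\Delta$ restricted near $z^*$; under $np/\log n\to\infty$ these are $\opnorm{\Delta}\lesssim \sqrt{np}+\sigma\sqrt{np}\asymp \sigma\sqrt{np}$ (up to lower-order terms) with probability $1-n^{-8}$, by standard matrix-Bernstein / Gaussian-matrix concentration, which is presumably a lemma in the earlier part of the paper. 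From the KKT system I would first prove a crude bound: $\wh Z$ is close to $z^*z^{*\H}$ in Frobenius norm at the rate $\opnorm{\Delta}^2/(np)^2 \cdot n^2 = o(n^2)$, using a Davis–Kahan / Weyl argument on the rank-one structure $z^*z^{*\H}$ together with the fact that $\iprod{G}{\wh Z}\geq \iprod{G}{z^*z^{*\H}}$ (optimality). This crude bound localizes $\wh Z$, and in particular shows its leading eigenvector $u$ is aligned with $z^*$ up to $o(1)$, and — crucially — that $\wh Z$ is approximately rank one, so $\wh Z\approx uu^\H$.

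Third, the sharp-constant argument: this is where the ``fixed point of lifted power iteration'' viewpoint pays off and where I expect the main difficulty to lie. Once $\wh Z$ is localized, I would bootstrap. Plug the approximate identity $\wh Z\approx uu^\H$ into the KKT relation to get, for each coordinate $j$, an equation of the form $\wh d_j u_j \approx (G\,u)_j\, \overline{(\text{phase})}$, i.e. $u$ itself nearly satisfies $G u = D u$ for a diagonal $D$, which is precisely the generalized power method fixed-point equation whose statistical analysis is by hypothesis already available (the paper says the technique ``unifies'' MLE/SDP/GPM). The delicate part is handling the discrepancy between $\wh Z$ and the genuine rank-one matrix $uu^\H$ — controlling it well enough that it contributes only to the $\delta=o(1)$ term and does not spoil the leading constant $\sigma^2/(np)$. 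For this I would (a) bound $\fnorm{\wh Z - uu^\H}^2$ by $2(n - \lambda_1(\wh Z))$ times $\opnorm{\wh Z}\leq$ (trace bound) $= n$, after lower-bounding $\lambda_1(\wh Z)$ using optimality $\iprod{G}{\wh Z}\ge \iprod{G}{z^*z^{*\H}}$ and the upper bound $\iprod{G}{\wh Z}\le \lambda_1(\wh Z)\,\opnorm{G}\le\ldots$; this forces $\lambda_1(\wh Z)=n-o(\text{something small})$; and (b) feed the resulting bound back into the coordinate-wise KKT equations. Iterating this refinement a bounded number of times drives the error to $(1+o(1))\sigma^2/(np)$ for $\tfrac1{n^2}\fnorm{\wh Z - z^*z^{*\H}}^2$, and then the per-coordinate bound $\ell(\wh z,z^*)\le (1+o(1))\sigma^2/(2np)$ follows by relating the normalized eigenvector loss to the matrix loss via the rank-one approximation (the factor $1/2$ coming from $|\hat z_j - z_j^* a|^2 \approx \tfrac12|\hat Z_{jj'} - z_j^*\bar z_{j'}^*|^2$-type identities under unit modulus). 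The failure probability $n^{-8}+\exp(-(np/\sigma^2)^{1/4})$ is exactly what the two concentration inputs deliver — the polynomial term from the operator-norm bound on $\Delta$, and the stretched-exponential term from the Gaussian-tail control of the coordinate-wise noise $\iprod{\Delta}{z^* e_j^\H}$ at the scale $(np/\sigma^2)^{1/4}$ standard deviations, which is the price of the sharp constant. The main obstacle throughout is step three: keeping every inequality tight to leading order and ensuring the rank-one-approximation error is genuinely negligible, rather than merely $o(1)$ relative to a crude bound.
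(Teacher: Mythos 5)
Your first two steps are essentially the paper's: the fixed-point/KKT characterization of $\wh Z$ is the same structural fact the paper derives via the factorization $\wh Z=\wh V^{\H}\wh V$ and the block-coordinate optimality condition $\wh V=f(\wh V)$ in (\ref{eq:fixed-point}), and your crude localization from $\iprod{A\circ Y}{\wh Z}\geq\iprod{A\circ Y}{z^*z^{*\H}}$ plus operator-norm bounds is exactly Lemma \ref{lem:SDP-crude}. The genuine gap is in your step three, where you funnel the sharp-constant argument through the rank-one approximation $\wh Z\approx uu^{\H}$. For the rank-one defect not to pollute the leading constant you would need $\fnorm{\wh Z-\lambda_1(\wh Z)uu^{\H}}^2=o(n\sigma^2/p)$, but the quantitative control you propose cannot deliver this. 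Optimality together with $\opnorm{A\circ Y-pz^*z^{*\H}}\lesssim(1+\sigma)\sqrt{np}$ only gives $n-\lambda_1(\wh Z)\lesssim n\sqrt{(1+\sigma^2)/(np)}$, so your bound $\fnorm{\wh Z-\lambda_1 uu^{\H}}^2\leq\lambda_2(\wh Z)\,(n-\lambda_1(\wh Z))\leq n(n-\lambda_1(\wh Z))$ is of order $n^2\sqrt{(1+\sigma^2)/(np)}$, which exceeds the target $n\sigma^2/p$ by the diverging factor $\sqrt{np(1+\sigma^2)}/\sigma^2$. Even a posteriori, knowing $\fnorm{\wh Z-z^*z^{*\H}}^2\leq(1+o(1))n\sigma^2/p$ only yields $n-\lambda_1(\wh Z)\lesssim\sqrt{n\sigma^2/p}$, so $\sum_{i\geq2}\lambda_i(\wh Z)^2$ can a priori be of the same order as the entire error budget. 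Your proposed remedy, ``iterating the refinement,'' has no contraction mechanism: $n-\lambda_1(\wh Z)$ and $\sum_{i\geq2}\lambda_i(\wh Z)^2$ are fixed attributes of the single SDP optimizer, not iterates of an algorithm, and making them provably second-order small is essentially the SDP tightness question, which is only known under $\sigma^2=O(n/\log n)$ --- precisely the barrier this theorem is meant to bypass. The same obstruction hits your reduction of $u$ to an ``approximate GPM fixed point'': the available sharp GPM analysis is for exact iterates/fixed points, and the perturbation you would feed into it is not shown to be negligible at the scale of the constant.

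The paper's proof avoids this entirely: it never approximates $\wh Z$ by a rank-one matrix. It keeps all $n$ columns of $\wh V$ and measures error by $\ell(\wh V,z^*)=\min_{\|a\|=1}\frac1n\sum_j\|\wh V_j-\bar z_j^*a\|^2$ with a free alignment vector $a\in\mathbb{C}^n$, proves the uniform one-step contraction of the lifted map $f$ (Lemma \ref{lem:critical}), applies it to the fixed point $\wh V=f(\wh V)$ after the crude bound, and then gets the matrix bound from the elementary inequality $\frac{1}{n^2}\fnorm{\wh V^{\H}\wh V-z^*z^{*\H}}^2\leq 2\ell(\wh V,z^*)$ (Lemma \ref{lem:loss-relation}). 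A second, smaller gap in your sketch: the eigenvector bound $\ell(\wh z,z^*)\leq(1+o(1))\frac{\sigma^2}{2np}$ does not follow by generically relating the eigenvector loss to the matrix loss --- a Davis--Kahan/Wedin step alone loses the factor-two-sharp constant; the paper's proof of Theorem \ref{thm:round-phase} uses Wedin only to align $\wh a$ with $a$ and then re-runs the coordinate-wise error decomposition of Lemma \ref{lem:critical} for $\wt z_j=\wh V_j^{\H}\wh a$ to isolate the imaginary-part noise that carries the constant $1/2$. So your plan would need either a genuinely new argument controlling $\lambda_2(\wh Z)$ (and the rank-one defect) to second order, or a switch to the full-factorization loss, which is the paper's route.
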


Compared with the minimax lower bound (Theorem \ref{thm:lower} in Section \ref{sec:main}), Theorem \ref{thm:intro} shows that SDP leads to both minimax optimal estimations of the matrix $z^*z^{*\H}$ and of the vector $z^*$. The two error bounds are not just rate-optimal, but the leading constants are sharp as well. We remark that both conditions $\sigma^2=o(np)$ and $\frac{np}{\log n}\rightarrow\infty$ are essential for the results of the above theorem to hold. Since the minimax risk of the problem is of order $\frac{\sigma^2}{np}$, the condition $\sigma^2=o(np)$, which is equivalent to $\frac{\sigma^2}{np}=o(1)$, guarantees that the minimax risk is of smaller order than the trivial one. The order $O(1)$ is trivial, as it can simply be achieved by random guess. The condition $\frac{np}{\log n}\rightarrow\infty$ guarantees that the random graph $A$ is connected with high probability. Our technical analysis would still go through when $p$ is of the same order as $\frac{\log n}{n}$, but in this regime only rate optimality is achieved.

Our analysis of the SDP does not rely on its connection to the MLE, and it is therefore fundamentally different from the approaches considered by \cite{bandeira2017tightness,zhong2018near,ling2020solving}. To study the statistical properties of SDP directly, we consider the following iteration procedure, \footnote{When the denominator (\ref{eq:critical-iter-intro}) is zero, take $V_j^{(t)} = V_j^{(t-1)}$.}
\begin{equation}
V_j^{(t)} = \frac{\sum_{k\in[n]\backslash\{j\}}\bar{Y}_{jk}V_k^{(t-1)}}{\left\|\sum_{k\in[n]\backslash\{j\}}\bar{Y}_{jk}V_k^{(t-1)}\right\|}\in\mathbb{C}^n,\quad j=1,\cdots,n.\label{eq:critical-iter-intro}
\end{equation}
Define the matrix $V^{(t)}\in\mathbb{C}^{n\times n}$ with its $j$th column being $V_j^{(t)}$. The above iteration can be shorthanded as $V^{(t)}=f(V^{(t-1)})$. We use (\ref{eq:critical-iter-intro}) as a non-convex characterization of the SDP (\ref{eq:SDP-intro}), because the solution to (\ref{eq:SDP-intro}) can always be written as $\wh{Z}=\wh{V}^{\H}\wh{V}$ for some $\wh{V}\in\mathbb{C}^{n\times n}$ satisfying the fixed-point equation $\wh{V}=f(\wh{V})$. Note that the iterative procedure (\ref{eq:critical-iter-intro}) resembles the formula of the generalized power method (GPM) \citep{singer2011angular,boumal2016nonconvex,zhong2018near}, \footnote{When the denominator (\ref{eq:GPM-intro}) is zero, take $z_j^{(t)} = z_j^{(t-1)}$.}
\begin{equation}
z_j^{(t)} = \frac{\sum_{k\in[n]\backslash\{j\}}{Y}_{jk}z_k^{(t-1)}}{\left|\sum_{k\in[n]\backslash\{j\}}{Y}_{jk}z_k^{(t-1)}\right|}\in\mathbb{C},\quad j=1,\cdots,n.\label{eq:GPM-intro}
\end{equation}
We can therefore think of (\ref{eq:critical-iter-intro}) as a lift of the GPM (\ref{eq:GPM-intro}) into a higher dimensional space. This allows us to analyze the statistical error of SDP from an iterative algorithm perspective, and previous techniques of analyzing general iterative algorithms in \cite{lu2016statistical,gao2019iterative} can be borrowed for the current purpose.
To understand the exact statistical error of SDP, we establish the following convergence result for the iterative procedure (\ref{eq:critical-iter-intro}),
\begin{equation}
\ell(V^{(t)},z^*)\leq \delta \ell(V^{(t-1)},z^*) + \textsf{optimal statistical error},\quad \text{for all }t\geq 1, \label{eq:important-intro}
\end{equation}
for some $\delta=o(1)$ with high probability, as long as it is properly initialized. Here, with slight abuse of notation, the loss of $\wh{V}$ is defined by
\begin{equation}
\ell(\wh{V},z^*)=\min_{a\in\mathbb{C}^n:\|a\|^2=1}\frac{1}{n}\sum_{j=1}^n\|\wh{V}_j-\bar{z}^*_ja\|^2, \label{eq:loss-V-z}
\end{equation}
which is natural given that the matrix $\wh{Z}=\wh{V}^{\H}\wh{V}$ is used to estimate $z^*z^{*\H}$. Since the SDP solution is a fixed point of the iteration (\ref{eq:critical-iter-intro}), the convergence result (\ref{eq:important-intro}) directly leads to the sharp statistical error bounds in Theorem \ref{thm:intro}.

Our analysis of SDP through (\ref{eq:critical-iter-intro}) also unifies the understandings of the GPM and the MLE. Given the relation between (\ref{eq:critical-iter-intro}) and (\ref{eq:GPM-intro}), the convergence result (\ref{eq:important-intro}) directly implies
\begin{equation}
\ell(z^{(t)},z^*)\leq \delta \ell(z^{(t-1)},z^*) + \textsf{optimal statistical error},\quad \text{for all }t\geq 1, \label{eq:GPM-conv-intro}
\end{equation}
for some $\delta=o(1)$ with high probability, as long as the GPM is properly initialized. This provides an alternative proof to the minimax optimality of the GPM that has been previously established by \cite{gao2020exact}. In addition, just as the SDP can be viewed as a fixed point of the iteration (\ref{eq:critical-iter-intro}), the MLE can be viewed as a fixed point of the iteration (\ref{eq:GPM-intro}). The minimax optimality of the MLE can also be derived. To summarize, we are able to show the exact minimax optimality of SDP, GPM, and MLE using a single proof based on the iterative procedure (\ref{eq:critical-iter-intro}).

In addition to phase synchronization, we also establish the optimality of the SDP for $\mathbb{Z}_2$ synchronization. In the setting of $\mathbb{Z}_2$ synchronization, one observes $Y_{jk}=z_j^*z_k^*+\sigma W_{jk}\in\mathbb{R}$ for $1\leq j<k\leq n$, and the goal is to estimate $z_1^*,\cdots,z_n^*\in\{-1,1\}$. Assume $W_{jk}\sim\n(0,1)$ and each $Y_{jk}$ is observed with probability $p$, we show that the SDP for $\mathbb{Z}_2$ synchronization achieves the error
\begin{equation}
\exp\left(-(1-o(1))\frac{np}{2\sigma^2}\right). \label{eq:exp-rate}
\end{equation}
We also prove a matching lower bound for this problem. Since $\mathbb{Z}_2$ synchronization is a discrete parameter estimation problem, the minimax risk is an exponential function of the signal-to-noise ratio, compared with the polynomial function for phase synchronization. Despite being a continuous optimization method, the SDP is able to adapt to the discreteness of the problem. The exponential rate (\ref{eq:exp-rate}) has been previously derived for $p=1$ by \cite{fei2020achieving}. Our analysis based on the iterative algorithm perspective generalizes their result to more general values of $p\gg\frac{\log n}{n}$.

\paragraph{Paper Organization.} The rest of the paper is organized as follows. In Section \ref{sec:main}, we establish the statistical optimality of the SDP for phase synchronization. The implications of the SDP analysis on the statistical error bounds of GPM and MLE are discussed in Section \ref{sec:GPM-MLE}. The analysis of the SDP for $\mathbb{Z}_2$ synchronization is presented in Section \ref{sec:z2}. Finally, Section \ref{sec:pf} collects all the technical proofs of the paper.

\paragraph{Notation.}

For $d \in \mathbb{N}$, we write $[d] = \{1,\dotsc,d\}$.  Given $a,b\in\mathbb{R}$, we write $a\vee b=\max(a,b)$ and $a\wedge b=\min(a,b)$. For a set $S$, we use $\mathbb{I}\{S\}$ and $|S|$ to denote its indicator function and cardinality respectively. For a complex number $x\in\mathbb{C}$, we use $\bar{x}$ for its complex conjugate, $\re(x)$ for its real part, $\im(x)$ for its imaginary part, and $\abs{x}$ for its modulus. For a complex vector $x\in\mathbb{C}^d$, we use $\norm{x} = \sqrt{\sum_{j=1}^d |x_j|^2}$ for its norm.
For a matrix $B =(B_{jk})\in\mathbb{C}^{d_1\times d_2}$, we use $B^\H \in\mathbb{C}^{d_2\times d_1}$ for its conjugate transpose such that $B^{\H}= (\bar{B}_{kj})$. 
The Frobenius norm and operator norm of $B$ are defined by $\fnorm{B}=\sqrt{\sum_{j=1}^{d_1}\sum_{k=1}^{d_2}|B_{jk}|^2}$ and $\opnorm{B} = \sup_{u\in\mathbb{C}^{d_1},v\in\mathbb{C}^{d_2}:\norm{u}=\norm{v}=1} u^\H Bv$. We use $\Tr(B)$ for the trace of a squared matrix $B$. For $U,V\in\mathbb{C}^{d_1\times d_2}$, $U\circ V\in\mathbb{R}^{d_1\times d_2}$ is the Hadamard product $U\circ V=(U_{jk}V_{jk})$.
The notation $\mathbb{P}$ and $\mathbb{E}$ are generic probability and expectation operators whose distribution is determined from the context. 
For two positive sequences $\{a_n\}$ and $\{b_n\}$, $a_n\lesssim b_n$ or $a_n=O(b_n)$ means $a_n\leq Cb_n$ for some constant $C>0$ independent of $n$. We also write $a_n=o(b_n)$ or $\frac{b_n}{a_n}\rightarrow\infty$ when $\limsup_n\frac{a_n}{b_n}=0$.

\section{Main Results}\label{sec:main}

\subsection{Problem Settings}

Recall that we observe a random graph $A_{jk}\sim\text{Bernoulli}(p)$ independently for all $1\leq j<k\leq n$. For each pair $(j,k)$, we observe $Y_{jk}=z_j^*\bar{z}_k^*+\sigma W_{jk}$ with $W_{jk}\sim\cn(0,1)$ whenever $A_{jk}=1$. The observations can be organized as an adjacency matrix $A$ and a masked version of the pairwise interactions $A\circ Y$. All the matrices $A$, $W$, and $Y$ are Hermitian as we define  $A_{jk}=A_{kj}$, $W_{jk}=\bar{W}_{kj}$, and $Y_{jk}=\bar{Y}_{kj}$  for all $1\leq k<j\leq n$ and $A_{jj} =W_{jj}=0$ and  $Y_{jj}=1$ for all $j\in[n]$. Hence we have the matrix representation $Y=z^*z^{*\H}+\sigma W$.

To estimate the vector $z^*\in\mathbb{C}_1^n$, the MLE is defined as a global maximizer of the following optimization problem
\begin{equation}
\max_{z\in\mathbb{C}_1^n}z^{\H}(A\circ Y)z. \label{eq:MLE-p}
\end{equation}
Since (\ref{eq:MLE-p}) is computationally infeasible, we consider the following convex relaxation of (\ref{eq:MLE-p}) via SDP,
\begin{equation}
\max_{Z=Z^{\H}\in\mathbb{C}^{n\times n}}\Tr((A\circ Y)Z)\quad\text{subject to}\quad \diag(Z)=I_n\text{ and }Z\succeq 0. \label{eq:SDP-complex-p}
\end{equation}
The goal of our paper is to establish the statistical optimality of the SDP (\ref{eq:SDP-complex-p}). We first provide a minimax lower bound as the benchmark of the problem.

\begin{thm}[Theorem 4.1 of \cite{gao2020exact}]\label{thm:lower}
Assume $\sigma^2=o(np)$. Then, we have
\begin{eqnarray*}
\inf_{\wh{Z}\in\mathbb{C}^{n\times n}}\sup_{z\in\mathbb{C}_1^n}\mathbb{E}_z\frac{1}{n^2}\fnorm{\wh{Z}-zz^{\H}}^2\geq (1-\delta)\frac{\sigma^2}{np}, \\
\inf_{\wh{z}\in\mathbb{C}_1^n}\sup_{z\in\mathbb{C}_1^n}\mathbb{E}_z\ell(\wh{z},z)\geq (1-\delta)\frac{\sigma^2}{2np},
\end{eqnarray*}
for some $\delta=o(1)$.
\end{thm}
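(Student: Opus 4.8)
This is Theorem 4.1 of \cite{gao2020exact}, so one may simply cite it; here is the argument I would give. The plan is to lower bound the minimax risk by a Bayes risk under a localized product prior on the phases $\theta_j^*$ and then apply a multivariate van Trees (Bayesian Cram\'er--Rao) inequality whose Fisher information term turns out to be \emph{exactly} a graph Laplacian; this exactness is what keeps the constant $\tfrac12$ intact.

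Write $z_j^*=e^{i\theta_j^*}$ and take the $\theta_j^*$ i.i.d.\ from a smooth density $\pi_n$ supported on a window around $0$ that shrinks to a point slowly. The shrinking support forces $z^*\approx\mathbf 1$, which lets one linearize simultaneously the map $\theta\mapsto e^{i\theta}$ and the alignment $\min_{|a|=1}$ in the definition of $\ell$; concretely, on the event $\{\ell(\wh z,z^*)=o(1)\}$ and writing $\wh\theta$ for a phase lift of $\wh z$ scaled so that $\Norm{\wh z}=\sqrt n$, one gets $\ell(\wh z,z^*)=\tfrac2n\big(n-|\iprod{\wh z}{z^*}|\big)=(1+o(1))\tfrac1n\Norm{P_\perp(\wh\theta-\theta^*)}^2$, where $P_\perp$ projects off the all-ones vector $\mathbf 1$ --- the direction annihilated by the global phase symmetry, which is precisely $\ker L_A$ for the graph Laplacian $L_A=D_A-A$. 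The complementary event contributes negligibly, since under the localized prior being far from $z^*$ modulo phase already forces loss $\gg\sigma^2/(np)$. Because $\{A_{jk}\}$ is ancillary for $\theta^*$, I condition on $A$; by spectral concentration for sparse random graphs with $np/\log n\to\infty$, with probability $1-o(1)$ the graph is connected and every nonzero eigenvalue of $L_A$ satisfies $\lambda_i(L_A)\le(1+o(1))np$, and it suffices to argue on this event.

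Next, compute the Fisher information of the data in $\theta^*$. For each observed pair $(j,k)$, the real and imaginary parts of $Y_{jk}$ are independent Gaussians with means $\cos(\theta_j^*-\theta_k^*)$ and $\sin(\theta_j^*-\theta_k^*)$ and common variance $\tfrac{\sigma^2}{2}$, and $\partial_{\theta_j}(\cos,\sin)(\theta_j-\theta_k)=-\partial_{\theta_k}(\cos,\sin)(\theta_j-\theta_k)$ is a unit vector; hence the edge $(j,k)$ contributes $+\tfrac{2}{\sigma^2}$ to the diagonal entries $(j,j)$ and $(k,k)$ and $-\tfrac{2}{\sigma^2}$ to the off-diagonal entries $(j,k)$ and $(k,j)$, so the full Fisher information matrix is $\mathcal I(\theta^*)=\tfrac{2}{\sigma^2}L_A$, independent of $\theta^*$ (the cosines cancel). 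The multivariate van Trees inequality applied to the functional $\beta=P_\perp\theta$ then gives $\Expect\Norm{P_\perp(\wh\theta-\theta^*)}^2\ge\Tr\big[P_\perp(\mathcal I(\theta^*)+\mathcal J)^{-1}P_\perp\big]=\tfrac{\sigma^2}{2}\sum_{i=2}^{n}\tfrac{1}{\lambda_i(L_A)+o(np)}\ge(1-o(1))\tfrac{\sigma^2}{2}\cdot\tfrac{n-1}{np}$, where $\mathcal J$ is the prior information matrix with $\opnorm{\mathcal J}\lesssim 1$ (so $\tfrac{\sigma^2}{2}\opnorm{\mathcal J}=o(np)$ since $\sigma^2=o(np)$) and the last step uses $\lambda_i(L_A)\le(1+o(1))np$. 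Dividing by $n$ yields $\sup_z\Expect_z\ell(\wh z,z^*)\ge(1-o(1))\tfrac{\sigma^2}{2np}$, the second bound. The first follows from the identity $\fnorm{\wh z\wh z^\H-z^*z^{*\H}}^2=2\big(n^2-|\iprod{\wh z}{z^*}|^2\big)=(2+o(1))\,n^2\,\ell(\wh z,z^*)$ after reducing a general $\wh Z$ to rank one through its leading eigenvector --- in the remaining case $\wh Z$ is far from rank one, so $\tfrac1{n^2}\fnorm{\wh Z-z^*z^{*\H}}^2$ only becomes larger, $z^*z^{*\H}$ being rank one.

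The hard part is keeping the $1+o(1)$ factors honest so that the constant $\tfrac12$ genuinely survives. Three pieces require care: (i) linearizing $\theta\mapsto e^{i\theta}$ together with $\min_{|a|=1}$ and bounding the remainder uniformly over the localized prior; (ii) choosing $\pi_n$ with Fisher information $O(1)$ --- so its contribution to the van Trees denominator is $o(np)$ --- yet support wide enough that the linearization error is $o(\sigma^2/(np))$; and (iii) invoking the sharp upper eigenvalue bound $\lambda_i(L_A)\le(1+o(1))np$ for sparse random graphs and discarding its low-probability complement. Each step is routine, but together they are where the exact-constant claim is earned.
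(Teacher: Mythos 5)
You should first note that the paper does not actually prove this statement: it is quoted verbatim from \cite{gao2020exact}, and the only argument recorded here is the remark that the cited proof establishes the \emph{matrix}-loss lower bound $(1-o(1))\frac{\sigma^2}{np}$ first, after which the vector bound follows immediately from $\inf_{\wh z}\sup_z\mathbb{E}_z\ell(\wh z,z)\geq\frac12\inf_{\wh Z}\sup_z\mathbb{E}_z\frac{1}{n^2}\fnorm{\wh Z-zz^{\H}}^2$, which in turn rests on the elementary fact that for unit-modulus $\wh z$ one has $\frac{1}{n^2}\fnorm{\wh z\wh z^{\H}-zz^{\H}}^2=\bigl(1+\tfrac{|\wh z^{\H}z|}{n}\bigr)\ell(\wh z,z)\leq 2\ell(\wh z,z)$ (Lemma \ref{lem:loss-relation}). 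Your van Trees/Laplacian computation for the second display is a plausible sketch of a direct proof of the vector bound (and the Fisher information $\frac{2}{\sigma^2}L_A$ and the eigenvalue bound $\lambda_i(L_A)\leq(1+o(1))np$ are correct), with the caveats you yourself flag in (i)--(ii) about uniform linearization and the global-phase alignment being where the constant is actually earned.

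The genuine gap is your derivation of the first display from the second, i.e.\ the matrix-loss bound with constant $1$ in $\frac{\sigma^2}{np}$. You invert the logical order of the cited proof, and the inversion does not preserve the constant. The inequality quoted above only transfers a matrix lower bound to a vector lower bound; to go the other way you must show that an \emph{arbitrary} $\wh Z\in\mathbb{C}^{n\times n}$ (not PSD, not unit-diagonal, not rank one) can be rounded to some $\wh z\in\mathbb{C}_1^n$ with $\frac{1}{n^2}\fnorm{\wh Z-zz^{\H}}^2\geq(2-o(1))\,\ell(\wh z,z)-o\!\left(\frac{\sigma^2}{np}\right)$, and your one-line reduction does not do this. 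Passing to the best rank-one approximation $\lambda_1uu^{\H}$ via Eckart--Young and the triangle inequality only gives $\fnorm{\lambda_1uu^{\H}-zz^{\H}}\leq 2\fnorm{\wh Z-zz^{\H}}$, i.e.\ a factor $4$ loss in squared Frobenius norm, which would yield at best a bound of order $\frac{\sigma^2}{4np}$; and your dichotomy (``if $\wh Z$ is far from rank one the loss only becomes larger'') necessarily places the threshold at the scale $\frac{\sigma^2}{np}$ itself, so in the near-rank-one case the reduction error is of the \emph{same order} as the bound being proved rather than negligible, and the sharp constant---which is the entire content of the theorem beyond the rate---is lost. The additional rounding of a non-unit-modulus eigenvector to $\mathbb{C}_1^n$ costs further constants. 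The way to repair this is the route of \cite{gao2020exact}: prove the matrix bound directly by a Bayes-risk computation on the entries $z_j\bar z_k=e^{i(\theta_j-\theta_k)}$, where the posterior uncertainties of $\theta_j$ and $\theta_k$ both contribute (this is exactly where the doubled constant $\frac{\sigma^2}{np}$ versus $\frac{\sigma^2}{2np}$ comes from), and then deduce the vector bound by the easy direction displayed above.
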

The above theorem has been established by \cite{gao2020exact} as the minimax lower bound for phase synchronization. In fact, Theorem 4.1 of \cite{gao2020exact} only states the lower bound result for the loss function $\ell(\wh{z},z)$. However, the proof of Theorem 4.1 of \cite{gao2020exact} actually established the lower bound under the loss $\frac{1}{n^2}\fnorm{\wh{Z}-zz^{\H}}^2$, and the lower bound for $\ell(\wh{z},z)$ is proved as a direct consequence  in view of the inequality
$$\inf_{\wh{z}\in\mathbb{C}_1^n}\sup_{z\in\mathbb{C}_1^n}\mathbb{E}_z\ell(\wh{z},z)\geq \frac{1}{2}\inf_{\wh{Z}\in\mathbb{C}^{n\times n}}\sup_{z\in\mathbb{C}_1^n}\mathbb{E}_z\frac{1}{n^2}\fnorm{\wh{Z}-zz^{\H}}^2.$$
Since the solution of the SDP (\ref{eq:SDP-complex-p}) is a matrix, it is natural to study the statistical error under $\frac{1}{n^2}\fnorm{\wh{Z}-zz^{\H}}^2$ in addition to the loss $\ell(\wh{z},z)$.


\subsection{A Convergence Lemma}

Our analysis of the SDP (\ref{eq:SDP-complex-p}) relies on an equivalent non-convex characterization. Since $Z$ is a positive semi-definite Hermitian matrix, it admits a decomposition
$$Z=V^{\H}V,$$
for some $V\in\mathbb{C}^{n\times n}$. Let $V_j$ be the $j$th column of $V$, and we have $Z_{jk}=V_j^{\H}V_k$. In particular, the constraint $\diag(Z)=I_n$ can be written as $Z_{jj}=\|V_j\|^2=1$ for all $j\in[n]$. Replacing $Z$ by $V^{\H}V$, the SDP (\ref{eq:SDP-complex-p}) can be equivalently represented as
\begin{equation}
\max_{V\in\mathbb{C}^{n\times n}}\Tr((A\circ Y)V^{\H}V)\quad\text{subject to}\quad \|V_j\|^2=1\text{ for all }j\in[n]. \label{eq:SDP-nonconvex}
\end{equation}
The formulation (\ref{eq:SDP-nonconvex}) is closely related to the  Burer-Monteiro problem \cite{burer2003nonlinear, javanmard2016phase} for the SDP except that here $V$ is still an $n\times n$ matrix  without dimension reduction. This non-convex formulation allows us to derive sharp statistical error bounds of the SDP (\ref{eq:SDP-complex-p}).

We analyze (\ref{eq:SDP-nonconvex}) through the following iteration procedure,
\begin{equation}
V_j^{(t)} = \begin{cases}
 \frac{\sum_{k\in[n]\backslash\{j\}}A_{jk}\bar{Y}_{jk}V_k^{(t-1)}}{\left\|\sum_{k\in[n]\backslash\{j\}}A_{jk}\bar{Y}_{jk}V_k^{(t-1)}\right\|}, &\sum_{k\in[n]\backslash\{j\}}A_{jk}\bar{Y}_{jk}V_k^{(t-1)} \neq 0,\\
 V_j^{(t-1)}, &\sum_{k\in[n]\backslash\{j\}}A_{jk}\bar{Y}_{jk}V_k^{(t-1)} = 0.
\end{cases} 
 \label{eq:critical-iter}
\end{equation}
Let us shorthand the above formula by
\begin{equation}\label{eqn:f_update}
V^{(t)}=f(V^{(t-1)}),
\end{equation}
by introducing a map $f:\mathbb{C}_1^{n\times n}\rightarrow \mathbb{C}_1^{n\times n}$ such that the $j$th column of $f(V^{(t-1)})$ is given by (\ref{eq:critical-iter}). We use the notation $\mathbb{C}_1^{n\times n}$ for the set of $n\times n$ complex matrices whose columns all have unit norms. The update (\ref{eqn:f_update}) can be seen as a local approach (or more precisely, a block coordinate ascent approach) \cite{wang2017mixing, erdogdu2018convergence} to solve (\ref{eq:SDP-nonconvex}). To see why this is true, consider the following local optimization problem
\begin{equation}
\max_{V_j\in\mathbb{C}^n:\|V_j\|^2=1}\left[V_j^{\H}\left(\sum_{k\in[n]\backslash\{j\}}A_{jk}\bar{Y}_{jk}{V}^{(t-1)}_k\right)+\left(\sum_{k\in[n]\backslash\{j\}}A_{jk}\bar{Y}_{jk}{V}^{(t-1)}_k\right)^{\H}V_j\right].\label{eq:SDP-local}
\end{equation}
The objective of (\ref{eq:SDP-local}) collects the terms in the expansion of $\Tr((A\circ Y)V^{\H}V)=\sum_{jk}A_{jk}\bar{Y}_{jk}V_j^{\H}V_k$ that depend on $V_j$ and replaces  $V_k$ by ${V}^{(t-1)}_k$ for all $k\in[n]\backslash \{j\}$. By simple algebra, we can see the solution of (\ref{eq:SDP-local}) is exactly (\ref{eq:critical-iter}).

Let $\wh{V}$ be a global maximizer of (\ref{eq:SDP-nonconvex}). The matrix $\wh{V}$ must be a fixed point of the map $f$,
\begin{equation}
\wh{V}=f(\wh{V}). \label{eq:fixed-point}
\end{equation}
To see why (\ref{eq:fixed-point}) holds, we consider the local optimization problem (\ref{eq:SDP-local}) with ${V}^{(t-1)}_k$ replaced by $\wh{V}_k$ for all $k\in[n]\backslash \{j\}$.
Thus, as long as $\wh{V}$ maximizes (\ref{eq:SDP-nonconvex}), its $j$th column $\wh{V}_j$ must maximize this local optimization problem, which then implies the fixed-point equation (\ref{eq:fixed-point}).

Since the SDP solution $\wh{Z}=\wh{V}^{\H}\wh{V}$ is an estimator of the matrix $z^{*}z^{*\H}$, we can think of $\wh{V}_j$ as an estimator of $\bar{z}_j^*$ embedded in $\mathbb{C}^n$. Note that
$$z_j^*\bar{z}_k^*=z_j^*a^{\H}a\bar{z}_k^*,$$
for any $a\in\mathbb{C}^n$ such that $\|a\|^2=1$, and thus we can embed each $\bar{z}_j^*$ in $\mathbb{C}^n$ by considering the vector $\bar{z}_j^*a\in\mathbb{C}^n$. This motivates the definition of the loss function $\ell(\wh{V},z^*)$ given in (\ref{eq:loss-V-z}). The following lemma characterizes the evolution of this loss function through the map $f$.

\begin{lemma}\label{lem:critical}
Assume $\frac{np}{\sigma^2}>c_1$ and $\frac{np}{\log n}>c_2$ for some sufficiently large constants $c_1, c_2>0$. Then, for any $\gamma\in[0,1/16)$, we have 
\begin{eqnarray*}
&& \mathbb{P}\left(\ell(f(V),z^*)\leq \delta_1\ell(V,z^*)+(1+\delta_2)\frac{\sigma^2}{2np}\text{ for all }V\in\mathbb{C}_1^{n\times n}\text{ such that }\ell(V,z^*)\leq \gamma\right) \\
&\geq& 1-(2n)^{-1}-\exp\left(-\left(\frac{np}{\sigma^2}\right)^{1/4}\right).
\end{eqnarray*}
where  $\delta_1=C_1\sqrt{\frac{\log n+\sigma^2}{np}}$ and $\delta_2=C_2\left(\gamma^2 + \frac{\log n+\sigma^2}{np}\right)^{1/4}$ for some constants $C_1,C_2>0$.
\end{lemma}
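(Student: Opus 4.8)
The plan is to fix $V\in\mathbb{C}_1^{n\times n}$ with $\ell(V,z^*)\le\gamma$, let $a\in\mathbb{C}^n$ with $\|a\|=1$ be the optimal alignment vector in (\ref{eq:loss-V-z}), and write $V_k=\bar z_k^* a + E_k$ where $\frac1n\sum_k\|E_k\|^2=\ell(V,z^*)\le\gamma$. The numerator defining $f(V)_j$ is then
\[
N_j := \sum_{k\ne j}A_{jk}\bar Y_{jk}V_k
     = \sum_{k\ne j}A_{jk}(z_j^*\bar z_k^* + \sigma\bar W_{jk})(\bar z_k^* a + E_k).
\]
First I would split $N_j$ into its ``signal'' part, which after dividing by $np$ points in the direction $\bar z_j^* a$, plus error terms coming from (i) the deviation of $\sum_{k\ne j}A_{jk}$ from $np$, (ii) the noise sum $\sigma\sum_{k\ne j}A_{jk}\bar W_{jk}\bar z_k^*a$, which is the term producing the sharp $\frac{\sigma^2}{2np}$ contribution, (iii) the cross term $\sum_{k\ne j}A_{jk}z_j^*\bar z_k^* E_k$ (note $|\bar z_k^*|=1$), which is controlled by $\opnorm{A_{\text{centered}}}$-type bounds times $\|E\|$ and contributes to $\delta_1\ell(V,z^*)$, and (iv) the noise$\times$error term $\sigma\sum_{k\ne j}A_{jk}\bar W_{jk}E_k$. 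After normalizing $V_j^{(t)} = N_j/\|N_j\|$, a Taylor expansion of the map $x\mapsto x/\|x\|$ around the signal direction converts these additive errors into bounds on $\|V_j^{(t)} - \bar z_j^* a\|^2$, and summing over $j$ and reoptimizing over the alignment gives the claimed recursion.

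The key probabilistic inputs I would assemble as preliminary high-probability events (each holding with probability $\ge 1-(4n)^{-1}$ say, so the union survives the stated bound), all under $np\gtrsim\log n$ and $np\gtrsim\sigma^2$:
(a) concentration of the degrees $\sum_{k\ne j}A_{jk}=np(1+o(1))$ uniformly in $j$, via Bernstein;
(b) an operator-norm bound $\opnorm{A - p(\mathbf 1\mathbf 1^\T - I)}\lesssim\sqrt{np}$ and the analogous $\opnorm{(A\circ W)_{\text{centered}}}\lesssim\sqrt{np}$ (matrix Bernstein / spectral bounds for inhomogeneous random matrices), which handle terms (iii) and (iv) by Cauchy–Schwarz against $\|E\|$;
(c) for the sharp term (ii), a careful second-moment computation: conditionally on $A$, $\sigma\sum_{k\ne j}A_{jk}\bar W_{jk}\bar z_k^* a$ is a complex Gaussian vector with covariance $\sigma^2(\sum_{k\ne j}A_{jk})\,a a^\H \approx \sigma^2 np\, aa^\H$ in the $a$-direction plus the orthogonal complement, so its squared norm is $\sigma^2 np(n + o(n))$-ish; dividing by $\|N_j\|^2\approx(np)^2$ and summing the component orthogonal to $\bar z_j^* a$ over $j$ yields exactly $(1+o(1))\frac{\sigma^2}{2np}$ (the factor $1/2$ coming from projecting out the one ``aligned'' real direction, consistent with the $\frac12$ in the loss and the footnote that $\re W, \im W$ each have variance $1/2$).

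The normalization step needs the denominators $\|N_j\|$ to be bounded below by $\sim np$ with high probability; this follows once the signal term dominates, which requires $\gamma$ small (hence the restriction $\gamma<1/16$) and the error bounds above — this also disposes of the degenerate case in (\ref{eq:critical-iter}). Reoptimizing the alignment $a$ after one step: the natural choice is to keep the same $a$, giving an upper bound on $\ell(f(V),z^*)$ that already has the right form, so no delicate argument is needed there.

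I expect the main obstacle to be getting the leading constant in term (ii) exactly right — i.e., tracking that the noise contributes $(1+o(1))\frac{\sigma^2}{2np}$ and not, say, $\frac{\sigma^2}{np}$ — while simultaneously showing all other error terms ($\delta_1\ell(V,z^*)$ and the $\delta_2\frac{\sigma^2}{2np}$ slack) are genuinely lower order. This means I cannot be cavalier with constants anywhere the $\sigma^2/(np)$ scale appears: the degree-concentration error, the $\opnorm{A\circ W}$ bound, and the Taylor-remainder of $x\mapsto x/\|x\|$ all feed cross terms of size $\sqrt{\frac{\log n+\sigma^2}{np}}\cdot\frac{\sigma}{\sqrt{np}}$ into the aligned direction, and I must verify these are absorbed into $\delta_1$ and $\delta_2$ with the exponents as stated ($\delta_1\asymp\sqrt{(\log n+\sigma^2)/(np)}$, $\delta_2\asymp(\gamma^2+(\log n+\sigma^2)/(np))^{1/4}$). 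A secondary technical point is that the event must be uniform over \emph{all} $V$ with $\ell(V,z^*)\le\gamma$, not a fixed $V$; I would handle this by noting that $f(V)$ depends on $V$ only through the quantities $\sum_{k\ne j}A_{jk}\bar Y_{jk}V_k$, bounding everything via operator norms of the (data-dependent but $V$-independent) matrices $A$ and $A\circ W$ contracted against the $V$-dependent but norm-controlled vectors, so the high-probability events are on the matrices alone and the bound is automatically uniform in $V$.
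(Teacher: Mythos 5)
Your skeleton --- decompose $\sum_{k\ne j}A_{jk}\bar Y_{jk}V_k$ into a signal part, a pure-noise part, an $A$-fluctuation part and a noise-times-error part, control the latter two through operator norms of $A-\mathbb{E}A$ and $A\circ W$ so that the high-probability events are $V$-independent, and extract the sharp constant $\tfrac12$ from the real/imaginary split of the aligned noise coefficient --- matches the paper's proof. But there is a genuine gap in the normalization step, and it is precisely the step that matters in the regime the lemma is designed for. You claim that $\|N_j\|\gtrsim np$ (and implicitly positivity of the aligned real part) holds for \emph{every} $j$ once $\gamma<1/16$ and your events (a)--(c) hold. The per-coordinate noise scalar $\frac{\sigma}{np}\bigl|\sum_{k\ne j}A_{jk}\bar W_{jk}\bar z_k^*\bigr|$ is typically of size $\sqrt{\sigma^2/(np)}$, but its maximum over $j\in[n]$ is of size $\sqrt{\sigma^2\log n/(np)}$; since the lemma only assumes $np/\sigma^2>c_1$ for a large \emph{constant} $c_1$ (possibly $np/\sigma^2\ll\log n$), there will, with non-negligible probability, be coordinates where the noise is comparable to or exceeds the signal, and none of your listed events (degrees, operator norms, second-moment sums) rules this out --- nor can any event with failure probability $(4n)^{-1}$, which is why the lemma's probability bound carries the extra term $\exp\bigl(-(np/\sigma^2)^{1/4}\bigr)$ that your plan never produces. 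The paper handles this by a truncation threshold $\rho$: coordinates with $\|F_j\|\vee|G_j|\vee\|H_j\|>\rho$ pay the trivial error $4$, their number is controlled by Lemma \ref{lem:gaomalu} plus Markov (event (\ref{eq:hollow5})), and this is the source of both the $\exp\bigl(-(np/\sigma^2)^{1/4}\bigr)$ probability term and part of $\delta_2$ via the choice $\rho^2\asymp\sqrt{(\log n+\sigma^2)/(np)}$. Without this mechanism your argument proves the lemma only under the much stronger condition $\sigma^2\lesssim np/\log n$, i.e.\ it does not go beyond the previously known regime.

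A second, fixable but real, issue is your item (iii) together with the claim that keeping the same alignment ``needs no delicate argument.'' Writing $V_k=\bar z_k^*a+E_k$, the cross term splits into a centered part, which is indeed $\lesssim\opnorm{A-\mathbb{E}A}\fnorm{E}$ and feeds into $\delta_1\ell(V,z^*)$, and a mean part, common to all $j$, equal (after dividing by the degree) to roughly $\frac1n\sum_k z_k^*E_k$, whose norm is only bounded by $\sqrt{\gamma}$ in general. If this vector had a component orthogonal to $a$, it would contribute order $\gamma$ to $\ell(f(V),z^*)$ and destroy the contraction, so it is \emph{not} covered by centered operator-norm bounds. It is harmless only because $a$ is the optimal alignment, so $\frac1n\sum_k z_k^*E_k=(\|m\|-1)a$ with $m=\frac1n\sum_k z_k^*V_k$ is exactly parallel to $a$ and only perturbs the projected-out real aligned coordinate; equivalently, the paper absorbs it by re-centering the alignment at $a_0=b+\frac{1}{n-1}\sum_k z_k^*(V_k-\bar z_k^*b)$ and working with $a=a_0/\|a_0\|$. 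You need to invoke this first-order optimality (or re-center as the paper does) explicitly; as written, the step would fail.
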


The lemma shows that for any $V\in\mathbb{C}_1^{n\times n}$ that has a nontrivial error, the matrix $f(V)$ will have an error that is smaller by a multiplicative factor $\delta_1$ up to an additive term $(1+\delta_2)\frac{\sigma^2}{2np}$. Define $V^*\in\mathbb{C}_1^{n\times n}$ with the $j$th column given by $V_j^*=\bar{z}_j^*a$ for some $a\in\mathbb{C}^n$ that satisfies $\|a\|^2=1$. We immediately have
$$\ell(f(V^*),z^*)\leq (1+\delta_2)\frac{\sigma^2}{2np}.$$
Thus, the additive term $(1+\delta_2)\frac{\sigma^2}{2np}$ can be understood as the oracle statistical error given the knowledge of $z^*$.

The two conditions $\frac{np}{\sigma^2}\geq c_1$ and $\frac{np}{\log n}\geq c_2$ are essential for the result to hold. While $\frac{np}{\sigma^2}\geq c_1$ makes sure that the statistical error $\frac{\sigma^2}{2np}$ is of a nontrivial order, the condition $\frac{np}{\log n}\geq c_2$ guarantees that the random graph is connected. We can slightly strengthen both conditions to $\frac{np}{\sigma^2}\rightarrow\infty$ and $\frac{np}{\log n}\rightarrow\infty$ so that both $\delta_1$ and $\delta_2$ are varnishing.

\subsection{Statistical Optimality of SDP}

In this section, we show the result of Lemma \ref{lem:critical} implies the statistical optimality of the SDP (\ref{eq:SDP-complex-p}). Since the solution of the SDP can be written as $\wh{Z}=\wh{V}^{\H}\wh{V}$ with $\wh{V}$ satisfying the fixed-point equation (\ref{eq:fixed-point}), we can apply the result of Lemma \ref{lem:critical} to $\wh{V}=f(\wh{V})$ as long as a crude bound $\ell(\wh{V},z^*)\leq \gamma$ can be proved for some $\gamma <1/16$.

\begin{lemma}\label{lem:SDP-crude}
Assume $\frac{np}{\log n}>c$ for some sufficiently large constant $c>0$. Let $\wh{Z}=\wh{V}^{\H}\wh{V}$ be a global maximizer of the SDP (\ref{eq:SDP-complex-p}). Then, there exits some constant $C>0$ such that
$$\ell(\wh{V},z^*)\leq C\sqrt{\frac{\sigma^2+1}{np}},$$
with probability at least $1-n^{-9}$.
\end{lemma}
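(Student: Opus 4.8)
The plan is to use only the optimality of $\wh Z$ in (\ref{eq:SDP-complex-p}), together with two by-now-standard operator-norm bounds, and to pass from a bound on the quadratic form $z^{*\H}\wh Z z^*$ to a bound on $\ell(\wh V,z^*)$ through an exact identity. Since every column of $\wh V$ and of $V^*$ (with $V^*_j=\bar z^*_ja$, $\|a\|=1$) has unit norm, expanding the square in (\ref{eq:loss-V-z}) and optimizing over $\|a\|=1$ by Cauchy--Schwarz gives
\[
\ell(\wh V,z^*)=2-\frac{2}{n}\|\wh V z^*\|=\frac{2}{n}\Bigl(n-\sqrt{z^{*\H}\wh Z z^*}\Bigr)\le\frac{2}{n^2}\Bigl(n^2-z^{*\H}\wh Z z^*\Bigr),
\]
where I used $\|\wh V z^*\|^2=z^{*\H}\wh Z z^*$ and $z^{*\H}\wh Z z^*\le\lambda_{\max}(\wh Z)\|z^*\|^2\le\Tr(\wh Z)\cdot n=n^2$. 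So it suffices to bound $n^2-z^{*\H}\wh Z z^*$ from above.

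For that I would use the basic inequality from feasibility of $z^*z^{*\H}$: optimality of $\wh Z$ yields $\Tr\bigl((A\circ Y)(\wh Z-z^*z^{*\H})\bigr)\ge 0$. Writing $A\circ Y=A\circ(z^*z^{*\H})+\sigma(A\circ W)$ and using $\E[A\circ(z^*z^{*\H})]=p(z^*z^{*\H}-I_n)$ together with $\Tr(\wh Z)=n$, $\diag(z^*z^{*\H})=I_n$ and $\Tr((z^*z^{*\H})^2)=n^2$, the deterministic part of the signal term equals exactly $p(z^{*\H}\wh Z z^*-n^2)$. Bounding each of the two remaining traces by $\opnorm{\cdot}\cdot\nnorm{\wh Z-z^*z^{*\H}}$, with $\nnorm{\wh Z-z^*z^{*\H}}\le\Tr(\wh Z)+\Tr(z^*z^{*\H})=2n$ (both matrices are positive semidefinite), and noting that $A\circ(z^*z^{*\H})-\E[A\circ(z^*z^{*\H})]=\diag(z^*)(A-\E A)\diag(\bar z^*)$ has operator norm $\opnorm{A-\E A}$, one obtains
\[
p\bigl(n^2-z^{*\H}\wh Z z^*\bigr)\le 2n\bigl(\opnorm{A-\E A}+\sigma\opnorm{A\circ W}\bigr).
\]
Finally, under the hypothesis $np\gtrsim\log n$ with a large enough constant, standard estimates give $\opnorm{A-\E A}\lesssim\sqrt{np}$ and $\opnorm{A\circ W}\lesssim\sqrt{np}$, each with probability at least $1-\tfrac12 n^{-9}$; combining this with the two displays above gives $\ell(\wh V,z^*)\lesssim(1+\sigma)/\sqrt{np}\le\sqrt 2\,\sqrt{(\sigma^2+1)/(np)}$, and a union bound over the two events yields probability at least $1-n^{-9}$.

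The routine part is the trace bookkeeping and the elementary algebra above; the only substantive inputs are the two operator-norm estimates, both available under $np\gtrsim\log n$ and both standard in the random-matrix literature (they will presumably be recorded as auxiliary lemmas in Section~\ref{sec:pf}). I expect $\opnorm{A\circ W}\lesssim\sqrt{np}$ for the Bernoulli-masked complex Gaussian Wigner matrix to be the more delicate of the two, since it requires controlling both the fluctuation of the degree sequence of $A$ (its maximum degree being $\lesssim np$ with high probability, which is where the $np\gtrsim\log n$ condition enters) and, conditionally on $A$, the operator norm of a Gaussian matrix with a prescribed sparsity pattern.
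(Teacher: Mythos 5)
Your proposal is correct and follows essentially the same route as the paper's proof: the basic optimality inequality $\Tr((A\circ Y)(\wh Z-z^*z^{*\H}))\geq 0$, the two operator-norm bounds $\opnorm{A-\E A}\lesssim\sqrt{np}$ and $\opnorm{A\circ W}\lesssim\sqrt{np}$ (the paper's Lemmas \ref{lem:ER-graph} and \ref{lem:bandeira}), and a reduction of $\ell(\wh V,z^*)$ to the deficiency $n^2-z^{*\H}\wh Z z^*=\Tr(z^*z^{*\H}(z^*z^{*\H}-\wh Z))$. The only differences are cosmetic bookkeeping: you use the exact identity $\ell(\wh V,z^*)=2-\tfrac{2}{n}\|\wh V z^*\|$ and a nuclear-norm H\"older bound (giving constant $2/n^2$ rather than the paper's $4/n^2$), while the paper uses the normalization inequality $\|x/\|x\|-y/\|y\|\|\leq 2\|x-y\|/\|x\|$ and bounds the traces of $\wh Z$ and $z^*z^{*\H}$ separately.
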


Under the condition that $\frac{np}{\sigma^2}$ and $\frac{np}{\log n}$ are sufficiently large, we have $\ell(\wh{V},z^*)\leq \gamma$ for some $\gamma<1/16$. Thus, Lemma \ref{lem:critical} and the fact $\wh{V}=f(\wh{V})$ imply that
\begin{equation}
\ell(\wh{V},z^*)\leq \delta_1\ell(\wh{V},z^*)+(1+\delta_2)\frac{\sigma^2}{2np}.\label{eq:apply-l-2-SDP}
\end{equation}
After rearrangement, we obtain the bound $\ell(\wh{V},z^*)\leq \frac{1+\delta_2}{1-\delta_1}\frac{\sigma^2}{2np}$. The result is summarized into the following theorem.
\begin{thm}\label{thm:main1}
Assume $\frac{np}{\sigma^2}>c_1$ and $\frac{np}{\log n}>c_2$ for some sufficiently large constants $c_1, c_2>0$. Let $\wh{Z}=\wh{V}^{\H}\wh{V}$ be a global maximizer of the SDP (\ref{eq:SDP-complex-p}). Then, there exists some  $\delta=C\left(\frac{\log n+\sigma^2}{np}\right)^{1/4}$ for some constant $C>0$, such that
\begin{eqnarray*}
\ell(\wh{V},z^*) &\leq& (1+\delta)\frac{\sigma^2}{2np}, \\
\frac{1}{n^2}\fnorm{\wh{Z}-z^*z^{*\H}}^2 &\leq& (1+\delta)\frac{\sigma^2}{np},
\end{eqnarray*}
with probability at least $1-2n^{-9}-\exp\left(-\left(\frac{np}{\sigma^2}\right)^{1/4}\right)$.
\end{thm}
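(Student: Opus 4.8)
The plan is to feed the fixed-point structure of the SDP into Lemma~\ref{lem:critical}. Since $\wh Z=\wh V^\H\wh V$ is a global maximizer of the SDP~(\ref{eq:SDP-complex-p}), equivalently $\wh V$ maximizes the non-convex program~(\ref{eq:SDP-nonconvex}), and hence by~(\ref{eq:fixed-point}) it satisfies $\wh V=f(\wh V)$. First I would invoke Lemma~\ref{lem:SDP-crude}: on an event $\mathcal E_1$ of probability at least $1-n^{-9}$ one has $\ell(\wh V,z^*)\le C\sqrt{(\sigma^2+1)/np}=:\gamma$, and under $np/\sigma^2>c_1$, $np/\log n>c_2$ with $c_1,c_2$ large this forces $\gamma<1/16$, so Lemma~\ref{lem:critical} applies with this $\gamma$. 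On $\mathcal E_1$ intersected with the event $\mathcal E_2$ of Lemma~\ref{lem:critical} — a union bound over the two events gives the claimed probability $1-2n^{-9}-\exp(-(np/\sigma^2)^{1/4})$, after sharpening the $n^{-1}$-order term in Lemma~\ref{lem:critical} to $n^{-9}$, which only changes absolute constants — the fixed-point identity $f(\wh V)=\wh V$ turns the conclusion of Lemma~\ref{lem:critical} into
\[
\ell(\wh V,z^*)=\ell(f(\wh V),z^*)\le \delta_1\,\ell(\wh V,z^*)+(1+\delta_2)\frac{\sigma^2}{2np},
\]
hence $\ell(\wh V,z^*)\le\frac{1+\delta_2}{1-\delta_1}\cdot\frac{\sigma^2}{2np}$ after rearranging.

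Next I would check the size of $\delta_1,\delta_2$. Because $\gamma^2\asymp(\sigma^2+1)/np\le(\log n+\sigma^2)/np$, the definitions in Lemma~\ref{lem:critical} give $\delta_2=C_2\bigl(\gamma^2+\frac{\log n+\sigma^2}{np}\bigr)^{1/4}\lesssim\bigl(\frac{\log n+\sigma^2}{np}\bigr)^{1/4}$, while $\delta_1=C_1\bigl(\frac{\log n+\sigma^2}{np}\bigr)^{1/2}\lesssim\bigl(\frac{\log n+\sigma^2}{np}\bigr)^{1/4}$ since this ratio is $o(1)$ under the hypotheses. Consequently $\frac{1+\delta_2}{1-\delta_1}=1+\delta$ with $\delta=C\bigl(\frac{\log n+\sigma^2}{np}\bigr)^{1/4}$, which yields the first bound $\ell(\wh V,z^*)\le(1+\delta)\frac{\sigma^2}{2np}$.

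For the matrix error I would pass from $\ell(\wh V,z^*)$ to $\frac1{n^2}\fnorm{\wh Z-z^*z^{*\H}}^2$ via the identity
\[
\fnorm{\wh Z-z^*z^{*\H}}^2=\fnorm{\wh Z}^2-2\,z^{*\H}\wh V^\H\wh V z^*+\fnorm{z^*z^{*\H}}^2 .
\]
Here $\fnorm{z^*z^{*\H}}^2=n^2$; the SDP constraints ($\diag(\wh Z)=I_n$, $\wh Z\succeq0$) force $|\wh Z_{jk}|\le1$ and hence $\fnorm{\wh Z}^2\le n^2$; and minimizing the loss~(\ref{eq:loss-V-z}) over $\|a\|^2=1$ shows that the minimizer is $a=\wh V z^*/\|\wh V z^*\|$ and $\ell(\wh V,z^*)=2-2\|\wh V z^*\|/n$, i.e. $z^{*\H}\wh V^\H\wh V z^*=\|\wh V z^*\|^2=n^2\bigl(1-\tfrac12\ell(\wh V,z^*)\bigr)^2$. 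Plugging these in,
\[
\frac1{n^2}\fnorm{\wh Z-z^*z^{*\H}}^2\le 2-2\bigl(1-\tfrac12\ell(\wh V,z^*)\bigr)^2=2\ell(\wh V,z^*)-\tfrac12\ell(\wh V,z^*)^2\le 2\ell(\wh V,z^*)\le(1+\delta)\frac{\sigma^2}{np},
\]
which is the second bound.

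I expect no genuine obstacle beyond Lemma~\ref{lem:critical} itself, which does all of the work and is assumed here. The one place demanding care is the \emph{leading constant} in the last step: estimating $\fnorm{\wh Z-z^*z^{*\H}}$ by a columnwise triangle inequality against the aligned matrix $V^*$ with columns $\bar z_j^* a$ would lose a factor $2$, so I instead use the exact identity above together with the first-order optimality of $a=\wh V z^*/\|\wh V z^*\|$, which is precisely what makes the relevant cross terms collapse to an $O(\ell(\wh V,z^*)^2)$ contribution and lets the constant come out sharp.
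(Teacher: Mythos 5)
Your proposal is correct and follows essentially the same route as the paper: the crude bound of Lemma~\ref{lem:SDP-crude} gives $\ell(\wh{V},z^*)\leq\gamma<1/16$, the fixed-point property $\wh{V}=f(\wh{V})$ combined with Lemma~\ref{lem:critical} yields (\ref{eq:apply-l-2-SDP}) and hence $\ell(\wh{V},z^*)\leq\frac{1+\delta_2}{1-\delta_1}\frac{\sigma^2}{2np}$, with the same bookkeeping of $\delta_1,\delta_2$ and of the failure probabilities. Your inline derivation of $\frac{1}{n^2}\fnorm{\wh{Z}-z^*z^{*\H}}^2\leq 2\ell(\wh{V},z^*)$ via the exact expansion and the minimizing $a=\wh{V}z^*/\|\wh{V}z^*\|$ is exactly the computation the paper isolates as Lemma~\ref{lem:loss-relation}, so there is no substantive difference in approach.
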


Theorem \ref{thm:main1} gives sharp statistical error bounds for both loss functions $\ell(\wh{V},z^*)$ and $\frac{1}{n^2}\fnorm{\wh{Z}-z^*z^{*\H}}^2$. While the result for $\ell(\wh{V},z^*)$ is derived from (\ref{eq:apply-l-2-SDP}), the result for $\frac{1}{n^2}\fnorm{\wh{Z}-z^*z^{*\H}}^2$ is a consequence of the inequality
$$\frac{1}{n^2}\fnorm{\wh{V}^{\H}\wh{V}-z^*z^{*\H}}^2\leq 2\ell(\wh{V},z^*),$$
which is established by Lemma \ref{lem:loss-relation} in Section \ref{sec:aux}. Compared with the minimax lower bound in Theorem \ref{thm:lower}, we can conclude that the SDP (\ref{eq:SDP-complex-p}) is minimax optimal for the estimation of the matrix $z^*z^{*\H}$. It not only achieves the optimal rate, but the leading constant is also sharp when $\sigma^2=o(np)$  and $\frac{np}{\log n}\rightarrow\infty$. 
Figure \ref{fig:1} verifies the correctness of the leading constants of the two loss functions. Both loss functions are approximately linear at least when $\sigma^2$ is small. 
When $\sigma^2$ and $np$ are of the same order, SDP does not have the optimal constant anymore, and its asymptotics is predicted by a very different technique \citep{javanmard2016phase}. We also remark that the $\ell_2$ error control does not imply that each individual $z_j^*$ can be accurately recovered. This is also reflected in Figure \ref{fig:1} with the comparison between $\ell_2$ and $\ell_{\infty}$ loss.
\begin{figure}[h]
	\centering
	\includegraphics[width=0.5\textwidth, trim={20 18 20 40},clip]{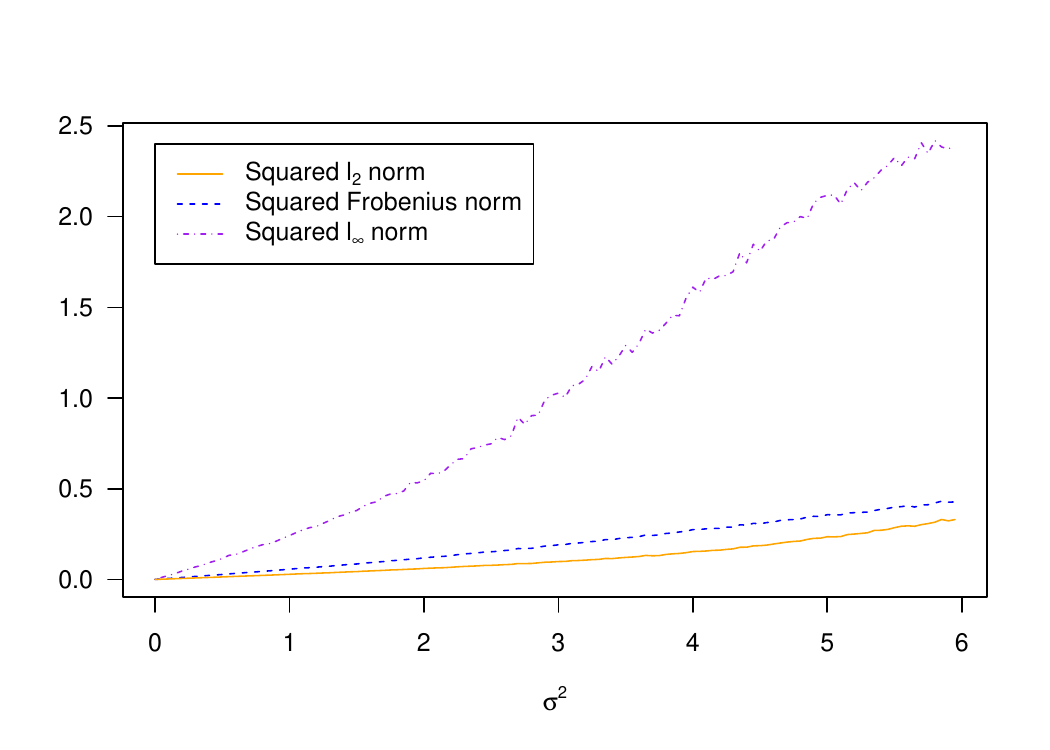}\includegraphics[width=0.5\textwidth, trim={20 18 20 40},clip]{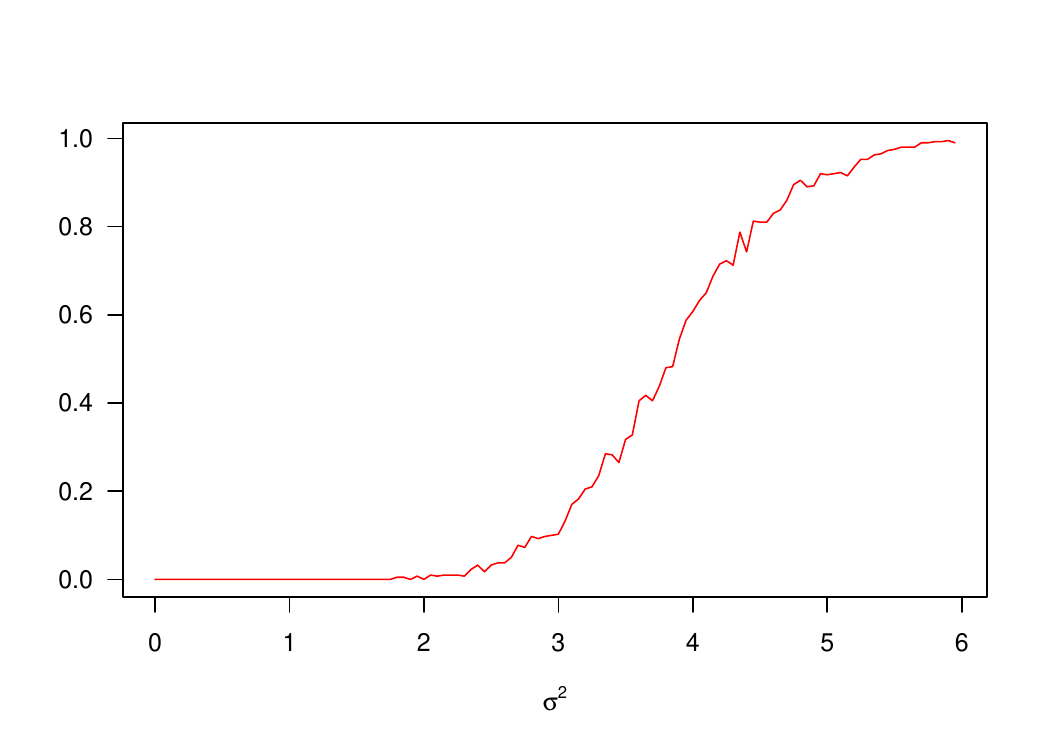}
	\caption{\textsl{Left: average values of $\ell(\wh{V},z^*)$ (orange solid), $\frac{1}{n^2}\fnorm{\wh{Z}-z^*z^{*\H}}^2$ (blue dashed) and $\max_{1\leq j\leq n}\|\wh{V}_j-\bar{z}_j^*a\|^2$ (purple dash-dotted) with $n=100$, $p=0.2$ and $\sigma^2$ varying in $[0,6]$ across $400$ independent experiments. The vector $a\in\mathbb{C}^n$ used in the squared $\ell_{\infty}$ loss is the minimizer of the right hand side of (\ref{eq:loss-V-z}). Right: probability of the event that the second eigenvalue of $\wh{Z}$ is nonzero (i.e., $\wh{Z}$ is not rank-one) for the same experiments.}}
	\label{fig:1}
\end{figure}

We emphasize that our proof of the optimality of the SDP is based on a direct statistical error analysis, regardless of whether the SDP relaxation is tight or not. It is shown by \cite{zhong2018near} that the tightness of SDP (when the solution has rank one) requires $\sigma^2=O\left(\frac{n}{\log n}\right)$ at least when $p=1$. When $\sigma^2=o(np)$, it is possible that SDP is not tight but still statistically optimal. This point is also illustrated by Figure \ref{fig:1}. 

Since the solution of the SDP is a matrix, some post-processing step is required to obtain a vector estimator for $z^*$. This can easily be done by extracting the leading eigenvector of $\wh{Z}$. Let $u\in\mathbb{C}^n$ be the leading eigenvector of $\wh{Z}$, and define $\wh{z}$ with each entry $\wh{z}_j=u_j/|u_j|$. If $u_j=0$ we can take $\wh{z}_j=1$. The statistical optimality of $\wh{z}$ is established by the following result. Recall that for two vectors in $\mathbb{C}_1^n$, the definition of the loss $\ell(\wh{z},z^*)$ is given by (\ref{eq:loss-basic-z}).
\begin{thm}\label{thm:round-phase}
Assume $\frac{np}{\sigma^2}>c_1$ and $\frac{np}{\log n}>c_2$ for some sufficiently large constants $c_1, c_2>0$. Let $\wh{Z}=\wh{V}^{\H}\wh{V}$ be a global maximizer of the SDP (\ref{eq:SDP-complex-p}). Then, 
$$\ell(\wh{z},z^*)\leq \left(1+C\left(\frac{\log n+\sigma^2}{np}\right)^{1/4}\right)\frac{\sigma^2}{2np},$$
with probability at least $1-2n^{-9}-\exp\left(-\left(\frac{np}{\sigma^2}\right)^{1/4}\right)$ for some constant $C>0$.
\end{thm}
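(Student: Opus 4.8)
The plan is to split the proof into a spectral step (the leading eigenvector $u$ of $\wh Z$ is almost perfectly correlated with $z^*$) and a rounding step (rounding $u$ to the torus does not blow up the constant), tied together by the algebraic fact that if $\wh V^\H\wh V u=\lambda_1 u$ with $\|u\|=1$, then $p:=\wh Vu/\sqrt{\lambda_1}$ is the unit leading eigenvector of $\wh V\wh V^\H$ and $u_j=\wh V_j^\H p/\sqrt{\lambda_1}$, so $\wh z_j=\wh V_j^\H p/|\wh V_j^\H p|$. For the spectral step, write $\wh Z=\sum_i\lambda_i u_iu_i^\H$ with $\lambda_1\ge\cdots\ge\lambda_n\ge0$, $u_1=u$. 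Since $\wh Z\succeq0$ and $\diag(\wh Z)=I_n$ we have $\sum_i\lambda_i=\Tr(\wh Z)=n$, hence $\lambda_2\le n-\lambda_1$. From $\fnorm{\wh Z-z^*z^{*\H}}^2=\Tr(\wh Z^2)-2(z^*)^\H\wh Zz^*+n^2$, $\Tr(\wh Z^2)\ge\lambda_1^2$, $(z^*)^\H\wh Zz^*\le\lambda_1 n$, and Theorem~\ref{thm:main1}, one gets $(n-\lambda_1)^2\le\fnorm{\wh Z-z^*z^{*\H}}^2\le(1+\delta)\tfrac{n\sigma^2}{p}$, so $\epsilon:=n-\lambda_1=o(n)$ under $\sigma^2=o(np)$. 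On the other hand $(z^*)^\H\wh Zz^*=\|\wh Vz^*\|^2$ and $\ell(\wh V,z^*)=\tfrac2n(n-\|\wh Vz^*\|)$, so Theorem~\ref{thm:main1} also gives $(z^*)^\H\wh Zz^*\ge\big(n-(1+\delta)\tfrac{\sigma^2}{4p}\big)^2\ge n^2-(1+\delta)\tfrac{n\sigma^2}{2p}$. Combining this with $(z^*)^\H\wh Zz^*=\sum_i\lambda_i|u_i^\H z^*|^2\le\lambda_1|(z^*)^\H u|^2+(n-\lambda_1)\big(n-|(z^*)^\H u|^2\big)$ and solving for $|(z^*)^\H u|^2$ (using $2\lambda_1-n=n-2\epsilon>0$) yields
\[
1-\frac{|(z^*)^\H u|^2}{n}\ \le\ \frac{(1+\delta)\sigma^2/(2p)}{n-2\epsilon}\ =\ (1+o(1))\,\frac{\sigma^2}{2np}.
\]

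\textbf{Rounding step.} Decompose $u=\tfrac{\mu}{n}z^*+\eta$ with $\mu=(z^*)^\H u$ and $(z^*)^\H\eta=0$, so that $\|\eta\|^2=1-|\mu|^2/n=:\tau\le(1+o(1))\tfrac{\sigma^2}{2np}$ by the spectral step, and $m:=|\mu|/n$ satisfies $nm^2=1-\tau$. Since $|\wh z_j|=|z_j^*|=1$ one has $\ell(\wh z,z^*)=2\big(1-\tfrac1n|\wh z^\H z^*|\big)$; a direct computation gives $\overline{\wh z_j}z_j^*=w_j/|w_j|$ with $w_j=\tfrac{\bar\mu}{n}+z_j^*\bar\eta_j$, and extracting the global phase $\mu/|\mu|$ turns this into $\tfrac{m+d_j}{|m+d_j|}$ where $d_j:=\tfrac{\mu}{|\mu|}z_j^*\bar\eta_j$ has $|d_j|=|\eta_j|$. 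Hence $|\wh z^\H z^*|\ge\sum_j\re\tfrac{m+d_j}{|m+d_j|}$. On the event $\max_j|\eta_j|=o(m)$ we have $m+\re d_j=m(1+o(1))$ uniformly in $j$, so $1-\re\tfrac{m+d_j}{|m+d_j|}=1-\big(1+\tfrac{(\im d_j)^2}{(m+\re d_j)^2}\big)^{-1/2}\le(1+o(1))\tfrac{(\im d_j)^2}{2m^2}$, and therefore
\[
\ell(\wh z,z^*)\ \le\ \frac{2}{n}\sum_j(1+o(1))\frac{(\im d_j)^2}{2m^2}\ \le\ \frac{1+o(1)}{nm^2}\sum_j|\eta_j|^2\ =\ \frac{(1+o(1))\tau}{1-\tau}\ \le\ (1+o(1))\frac{\sigma^2}{2np},
\]
which is the claimed bound (the bound on $\fnorm{\wh Z-z^*z^{*\H}}^2$ in the statement also follows from this, or directly from Theorem~\ref{thm:main1}). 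Note that only the tangential part $\im d_j$ of each residual enters, and $\sum_j(\im d_j)^2\le\|\eta\|^2$, so no spurious constant is lost in this step.

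\textbf{The obstacle.} What remains, and is the delicate point, is the delocalization bound $\max_j|\eta_j|=o(m)$, i.e.\ $\max_j|u_j-\tfrac{\mu}{n}z_j^*|^2=o(1/n)$. This cannot follow from $\|\eta\|^2=\tau$ alone: a single coordinate carrying all the mass $\tau$ would violate it whenever $\sigma^2/p\gg1$. I would obtain it through the identity $u_j=\wh V_j^\H p/\sqrt{\lambda_1}$: with $a^*=\wh Vz^*/\|\wh Vz^*\|$ the optimal alignment in $\ell(\wh V,z^*)$ and $E:=\wh V-a^*(z^*)^\H$, one checks (using $Ez^*=-\tfrac n2\ell(\wh V,z^*)\,a^*$) that
\[
\eta_j=\frac{\bar\nu\,\ell(\wh V,z^*)}{2\sqrt{\lambda_1}}\,z_j^*+\frac{E_j^\H p}{\sqrt{\lambda_1}},\qquad \nu:=p^\H a^*,
\]
whose first term is $O(\ell(\wh V,z^*)/\sqrt n)=o(m)$ and whose second term is $\le\|E_j\|/\sqrt{\lambda_1}$. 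Thus it suffices to prove the uniform column bound $\max_j\|\wh V_j-\bar z_j^* a^*\|^2=o(1)$ on the SDP solution; this is an $\ell_\infty$-type statement that the $\ell_2$ bound $\sum_j\|\wh V_j-\bar z_j^* a^*\|^2=n\ell(\wh V,z^*)\lesssim\sigma^2/p$ does not deliver. I expect it to follow from the fixed-point equation $\wh V=f(\wh V)$ by a leave-one-out / entrywise perturbation analysis of the map $f$ in the spirit of \cite{lu2016statistical,gao2019iterative} — each column of $\wh V$ being, up to normalization, a weighted combination of the remaining columns against the sub-Gaussian noise $\sigma W$ — with the subtlety that this bound must be obtained without a spurious $\log n$ factor so as not to require $\sigma^2\log n=o(np)$; alternatively, if such an entrywise bound is already established inside the proof of Lemma~\ref{lem:critical} (or in Section~\ref{sec:aux}) it can simply be invoked. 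The whole argument is then run on the intersection of the high-probability event of Theorem~\ref{thm:main1} with the (high-probability) delocalization event, which accounts for the stated probability.
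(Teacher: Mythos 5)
Your spectral step (relating $1-|u^{\H}z^*|^2/n$ to the two bounds of Theorem \ref{thm:main1}) and your rounding computation are sound, and they do preserve the constant $\tfrac{\sigma^2}{2np}$ \emph{on the event you condition on}. The genuine gap is exactly the step you flag as ``the obstacle'': the delocalization $\max_j|\eta_j|=o(m)$, equivalently $\max_j\|\wh{V}_j-\bar z_j^*a^*\|=o(1)$, is not proved, and in fact it is not available in the regime the theorem covers. The hypotheses only require $\frac{np}{\sigma^2}>c_1$ and $\frac{np}{\log n}>c_2$ for large constants (and the bound remains meaningful whenever $\frac{\log n+\sigma^2}{np}=o(1)$, e.g.\ $\sigma^2=np/\log\log n$), so it is allowed that $\sigma^2\log n\gtrsim np$. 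In that case $\max_j|G_j|\asymp\sigma\sqrt{\log n/(np)}$ is large, a few columns of $\wh V$ are noise-dominated through the fixed-point equation $\wh V=f(\wh V)$, and for those $j$ one has $\|\wh V_j-\bar z_j^*a^*\|$ of constant order and $|\eta_j|\asymp 1/\sqrt n\asymp m$. So the uniform event you need fails with non-negligible probability; no leave-one-out argument can rescue a statement that is false, and no such entrywise lemma exists in Section \ref{sec:aux} or inside the proof of Lemma \ref{lem:critical} for you to invoke. Since your Taylor expansion $1-\re\frac{m+d_j}{|m+d_j|}\le(1+o(1))\frac{(\im d_j)^2}{2m^2}$ is used \emph{uniformly in $j$}, a single bad coordinate invalidates the display that concludes your rounding step.

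The fix is to abandon uniform control and tolerate an exceptional set of coordinates, which is how the paper argues: it reruns the $F_j,G_j,H_j$ decomposition for the fixed point $\wh V=f(\wh V)$, applies Lemma \ref{lem:middle} (together with Wedin's theorem comparing the leading left singular vector $\wh a$ of $\wh V$ with the alignment vector $a$) only on the coordinates with $\|F_j\|\vee|G_j|\vee\|H_j\|\le\rho$, and charges each bad coordinate a cost $4/n$, bounding their number by $\frac{4\sigma^2}{\rho^2p}\exp\left(-\frac{1}{16}\sqrt{\rho^2np/\sigma^2}\right)$ via (\ref{eq:hollow5}); the dominant good-coordinate term $\sum_j|\im(a^{\H}bG_j)|^2\le(1+o(1))\frac{\sigma^2}{2p}$ then yields the sharp constant. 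Your argument can likely be repaired in the same spirit—run the expansion only on $\{j:|\eta_j|\le\rho m\}$ and count the complement—but that counting step is precisely the missing (and essential) ingredient, and it is not supplied by the $\ell_2$ bound $\|\eta\|^2=\tau$ alone, since $\tau\lesssim\sigma^2/(np)$ permits order $\sigma^2 p^{-1}/(\rho^2 m^{-2}n^{-1})$-many coordinates with $|\eta_j|\ge\rho m$ unless one brings in the Gaussian tail structure of the $G_j$ as the paper does.
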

Compared with the minimax lower bound in Theorem \ref{thm:lower}, the SDP (\ref{eq:SDP-complex-p}) is also minimax optimal for the estimation of the vector $z^*$ in phase synchronization. Theorem \ref{thm:main1} and Theorem \ref{thm:round-phase} together establish Theorem \ref{thm:intro}.

\section{Implications on Generalized Power Method and MLE}\label{sec:GPM-MLE}

In this section, we show that the analysis of the SDP through Lemma \ref{lem:critical} also leads to statistical optimality of the generalized power method (GPM) and the maximum likelihood estimator (MLE). We note that it has already been established by \cite{gao2020exact} that both GPM and MLE achieve the optimal error bound $(1+o(1))\frac{\sigma^2}{2np}$ under the loss $\ell(\wh{z},z^*)$. By deriving the same results using the analysis of the SDP, we can unify the three proofs and thus form a coherent understanding of the three different methods.

The iteration of GPM of phase synchronization is
\begin{equation}
z_j^{(t)} = 
\begin{cases}
\frac{\sum_{k\in[n]\backslash\{j\}}A_{jk}Y_{jk}z_k^{(t-1)}}{\left|\sum_{k\in[n]\backslash\{j\}}A_{jk}Y_{jk}z_k^{(t-1)}\right|}, & \sum_{k\in[n]\backslash\{j\}}A_{jk}Y_{jk}z_k^{(t-1)}\neq 0,\\
z_j^{(t-1)}, & \sum_{k\in[n]\backslash\{j\}}A_{jk}Y_{jk}z_k^{(t-1)}=0.
\end{cases}
 \label{eq:GPM}
\end{equation}
The similarity between (\ref{eq:GPM}) and (\ref{eq:critical-iter}) is obvious. To make an explicit connection between the two iteration procedures, we can embed (\ref{eq:GPM}) into the space of (\ref{eq:critical-iter}). Let $e_1\in\mathbb{C}^n$ be the first canonical vector with the first entry $1$ and the remaining entries all $0$. It is easy to check that as long as $V_j^{(t-1)}=\bar{z}_j^{(t-1)}e_1$ for all $j\in[n]$, we also have $V_j^{(t)}=\bar{z}_j^{(t)}e_1$ for all $j\in[n]$. This is because once the columns $V_1^{(t)},\cdots,V_n^{(t)}$ lie in the same one-dimensional subspace for some $t$, the iteration (\ref{eq:critical-iter}) remains in this subspace. Thus, the formula (\ref{eq:critical-iter}) exactly describes the GPM iteration (\ref{eq:GPM}). In addition to the connection between (\ref{eq:GPM}) and (\ref{eq:critical-iter}), the two loss functions $\ell(V,z^*)$ and $\ell(z,z^*)$ are also equivalent. Under the condition that $V_j=\bar{z}_je_1$ for all $j\in[n]$, we have
$$\ell(V,z^*)=\ell(z,z^*).$$
Therefore, Lemma \ref{lem:critical} directly implies that
\begin{equation}
\ell(g(z),z^*)\leq \delta_1\ell(z,z^*)+(1+\delta_2)\frac{\sigma^2}{2np},\label{eq:from-lem-critical}
\end{equation}
uniformly over all $z\in\mathbb{C}_1^n$ such that $\ell(z,z^*)< 1/16$ with high probability. The map $g:\mathbb{C}_1^n\rightarrow\mathbb{C}_1^n$ is defined so that (\ref{eq:GPM}) can be shorthanded by $z^{(t)}=g(z^{(t-1)})$.

From (\ref{eq:from-lem-critical}), we know that as long as $\ell(z^{(t-1)},z^*)\leq \gamma$ for some $\gamma < 1/16$, the next step of power iteration (\ref{eq:GPM}) satisfies
\begin{equation}
\ell(z^{(t)},z^*) \leq \delta_1\ell(z^{(t-1)},z^*) + (1+\delta_2)\frac{\sigma^2}{2np}. \label{eq:inter-lem-key}
\end{equation}
The condition $\ell(z^{(t-1)},z^*)\leq \gamma$ then implies $\ell(z^{(t)},z^*)\leq \delta_1\gamma+(1+\delta_2)\frac{\sigma^2}{2np}$. Given that $\frac{\sigma^2}{2np}$ is sufficiently small, we can always choose $\gamma< 1/16$ that satisfies $\frac{\sigma^2}{2np}\leq \frac{\gamma}{2}$. Therefore, $\ell(z^{(t)},z^*)\leq \gamma$. Thus, a simple induction argument implies that (\ref{eq:inter-lem-key}) holds for all $t\geq 1$ as long as $\ell(z^{(0)},z^*)\leq \gamma$. The one-step iteration bound (\ref{eq:inter-lem-key}) immediately implies the linear convergence
\begin{equation}
\ell(z^{(t)},z^*) \leq \delta_1^t\ell(z^{(0)},z^*) + \frac{1+\delta_2}{1-\delta_1}\frac{\sigma^2}{2np}, \label{eq:inter-lem-geo}
\end{equation}
for all $t\geq 1$. It has been shown by \cite{gao2020exact} that the initial error condition $\ell(z^{(0)},z^*)\leq \gamma< 1/16$ is satisfied by a simple eigenvector method. That is, $z_j^{(0)}=v_j/|v_j|$ with $v\in\mathbb{C}^n$ being the leading eigenvector of the matrix $A\circ Y$. Then, (\ref{eq:inter-lem-geo}) implies $\ell(z^{(t)},z^*)\leq (1+o(1))\frac{\sigma^2}{2np}$ for all $t\geq \log\left(\frac{1}{\sigma^2}\right)$.

The optimality of the MLE can be derived from a similar embedding argument. Let $\wh{z}$ be a global maximizer of (\ref{eq:MLE-p}). By the definition of $\wh{z}$, its $j$th entry must satisfy
$$\wh{z}_j=\argmin_{z_j\in\mathbb{C}_1}\sum_{k\in[n]\backslash\{j\}}A_{jk}|Y_{jk}-z_j\bar{\wh{z}}_k|^2=\frac{\sum_{k\in[n]\backslash\{j\}}A_{jk}Y_{jk}\wh{z}_k}{\left|\sum_{k\in[n]\backslash\{j\}}A_{jk}Y_{jk}\wh{z}_k\right|},$$ 
as long as $\sum_{k\in[n]\backslash\{j\}}A_{jk}Y_{jk}\wh{z}_k \neq 0$.
By letting $\wh{V}=e_1\wh{z}^{\H}$, it can be shown that the fixed-point equation $\wh{V}=f(\wh{V})$ holds. Given the equivalence of the loss $\ell(\wh{V},z^*)=\ell(\wh{z},z^*)$, as long as we can show a crude bound $\ell(\wh{z},z^*)\leq \gamma<1/16$ for the MLE, the inequality (\ref{eq:apply-l-2-SDP}) holds and it can be written as
$$\ell(\wh{z},z^*)\leq \delta_1\ell(\wh{z},z^*)+(1+\delta_2)\frac{\sigma^2}{2np},$$
which implies $\ell(\wh{z},z^*)\leq \frac{1+\delta_2}{1-\delta_1}\frac{\sigma^2}{2np}$ after rearrangement. The crude bound $\ell(\wh{z},z^*)\leq \gamma<1/16$ can be easily established for the MLE using the argument in \cite{gao2020exact} or by a similar argument to the proof of Lemma \ref{lem:SDP-crude}, and thus we obtain the optimal error bound $\ell(\wh{z},z^*)\leq(1+o(1))\frac{\sigma^2}{2np}$ for the MLE.

\section{SDP for $\mathbb{Z}_2$ Synchronization}\label{sec:z2}

In this section, we show our analysis of SDP can also be applied to $\mathbb{Z}_2$ synchronization and leads to a sharp exponential statistical error rate. Suppose we observe a random graph $A_{jk}\sim\text{Bernoulli}(p)$ independently for all $1\leq j<k\leq n$. For each pair $(j,k)$, we observe $Y_{jk}=z_j^*z_k^*+\sigma W_{jk}$ with $z_j^*,z_k^*\in\{-1,1\}$ and $W_{jk}\sim\n(0,1)$ whenever $A_{jk}=1$. In $\mathbb{Z}_2$ synchronization, our goal is to estimate the binary vector $z^*\in\{-1,1\}^n$ from observations $\{A_{jk}\}_{1\leq j<k\leq n}$ and $\{A_{jk}Y_{jk}\}_{1\leq j<k\leq n}$. We organize the data into two matrices $A$ and $A\circ Y$. Both the matrices $A$ and $Y$ are symmetric as we define $Y_{jk}=Y_{kj}$ and $A_{jk}=A_{kj}$ for all $1\leq k<j\leq n$ and $Y_{jj}=A_{jj}=0$ for all $j\in[n]$.

With slight abuse of notation, we consider the loss function
$$\ell(\wh{z},z)=\min_{a\in\{-1,1\}}\frac{1}{n}\sum_{j=1}^n|\wh{z}_j-z_ja|^2,$$
for any $\wh{z},z\in\{-1,1\}^n$. Since $|\wh{z}_j-z_ja|^2=4\mathbb{I}\{\wh{z}_j\neq z_ja\}$, the loss $\ell(\wh{z},z)$ is also called the misclassification proportion in a clustering problem \citep{zhang2016minimax,gao2019iterative}. We first present the minimax lower bound of $\mathbb{Z}_2$ synchronization under this loss function.

\begin{thm}\label{thm:lower-z2}
Assume $\frac{np}{\sigma^2}>c_1$ and $\frac{np}{\log n}>c_2$ for some sufficiently large constants $c_1,c_2>0$. Then, we have
\begin{eqnarray*}
\inf_{\wh{Z}\in\mathbb{R}^{n\times n}}\sup_{z\in\{-1,1\}^n}\mathbb{E}_z\frac{1}{n^2}\fnorm{\wh{Z}-zz^{\T}}^2\geq \exp\left(-(1+\delta)\frac{np}{2\sigma^2}\right), \\
\inf_{\wh{z}\in\{-1,1\}^n}\sup_{z\in\{-1,1\}^n}\mathbb{E}_z\ell(\wh{z},z)\geq \exp\left(-(1+\delta)\frac{np}{2\sigma^2}\right),
\end{eqnarray*}
where $\delta=C\sqrt{\frac{\log n + \sigma^2}{np}}$ for some constant $C>0$.
\end{thm}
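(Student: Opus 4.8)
I would prove this by a genie‑aided (oracle) lower bound of the type now standard for discrete network estimation \citep{zhang2016minimax, gao2019iterative, fei2020achieving}. Using $\inf_{\wh z}\sup_{z}\mathbb{E}_z\geq\inf_{\wh z}\mathbb{E}_{z\sim\pi}\mathbb{E}_z$ for the uniform prior $\pi$ on $\{-1,1\}^n$, it suffices to lower bound the Bayes risk. The core is a per‑vertex oracle bound: for each $i\in[n]$, reveal to the statistician the graph $A$ and all labels $z_{-i}$; this can only decrease the risk, and conditionally on $(A,z_{-i})$ the data $\{A_{jk},A_{jk}Y_{jk}\}$ decouples — the part indexed by $j,k\neq i$ is independent of $z_i$, while the part indexed by $(i,k)$, $k\neq i$, reduces (via the aligned statistic $\sum_{k:A_{ik}=1}z_kY_{ik}\sim\n(z_id_i,\sigma^2 d_i)$, where $d_i=\sum_{k\neq i}A_{ik}$) to a two‑point Gaussian test whose Bayes error is exactly $\bar\Phi(\sqrt{d_i}/\sigma)$, with $\bar\Phi$ the standard normal survival function.

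Next I would establish a uniform degree bound. Since $d_i\sim\mathrm{Binomial}(n-1,p)$, a Chernoff bound and a union over $i$ produce an event $\mathcal E$ with $\mathbb{P}(\mathcal E)=1-o(1)$ on which $d_i\leq np(1+\epsilon_n)$ for all $i$, where $\epsilon_n=C_0\sqrt{\log n/(np)}=o(1)$ under $np/\log n\to\infty$. On $\mathcal E$, monotonicity of $\bar\Phi$ together with the tail estimate $\bar\Phi(t)\geq\frac{1}{\sqrt{2\pi}}(t^{-1}-t^{-3})e^{-t^2/2}$ (valid once $np/\sigma^2$ is large, which the hypothesis guarantees) gives
\[
\bar\Phi(\sqrt{d_i}/\sigma)\ \geq\ \exp\!\Big(-(1+\delta_0)\tfrac{np}{2\sigma^2}\Big),\qquad \delta_0\ \lesssim\ \epsilon_n+\tfrac{\sigma^2}{np}\log\tfrac{np}{\sigma^2}\ \lesssim\ \sqrt{\tfrac{\log n+\sigma^2}{np}},
\]
where in the last step I use $\tfrac{\sigma^2}{np}\log\tfrac{np}{\sigma^2}\lesssim\sqrt{\sigma^2/(np)}$ (set $v=np/\sigma^2\to\infty$; then $v^{-1}\log v\leq v^{-1/2}$). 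This is the step that produces the sharp leading constant $1$, and it is where both conditions $np/\sigma^2\to\infty$ and $np/\log n\to\infty$ enter.

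For the matrix loss I would use $\tfrac1{n^2}\fnorm{\wh Z-zz^{\T}}^2\geq\tfrac1{n^2}\sum_i\sum_{k\neq i}\mathbb{I}\{\wh Z_{ik}z_iz_k\leq 0\}$ (since $|z_k|=1$ forces $(\wh Z_{ik}-z_iz_k)^2\geq1$ whenever $\wh Z_{ik}z_iz_k\leq0$), note that $\sgn(z_k\wh Z_{ik})$ is a legitimate estimator of $z_i$ from the data and $z_{-i}$, hence has error at least the oracle error, and then take $\mathbb{E}_A$ to get $\mathbb{E}\tfrac1{n^2}\fnorm{\wh Z-zz^{\T}}^2\geq\tfrac{n-1}{n}(1-o(1))\exp(-(1+\delta_0)\tfrac{np}{2\sigma^2})$. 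For the vector loss the only extra ingredient is the global sign ambiguity: writing $\tfrac n4\ell(\wh z,z)=\min(H,n-H)$ with $H=\mathrm{Ham}(\wh z,z)$, the minimizing alignment $a^\star=\sgn\langle\wh z,z\rangle$ equals the leave‑one‑out alignment $\hat a_i=\sgn(\langle\wh z,z\rangle-\wh z_iz_i)$ whenever $|\langle\wh z,z\rangle|>1$, and $\hat a_i\wh z_i$ is a legitimate estimator of $z_i$ from the data and $z_{-i}$; on the complementary event one has the trivial bound $\ell(\wh z,z)\geq\tfrac{2(n-1)}{n}\geq1$. Writing $q=\mathbb{P}(|\langle\wh z,z\rangle|\leq1)$ and balancing $\mathbb{E}\min(H,n-H)\geq\sum_i\mathbb{E}_A\bar\Phi(\sqrt{d_i}/\sigma)-nq$ against $\mathbb{E}\min(H,n-H)\geq\tfrac{n-1}{2}q$ yields $\mathbb{E}\,\ell(\wh z,z)\gtrsim\tfrac1n\sum_i\mathbb{E}_A\bar\Phi(\sqrt{d_i}/\sigma)\geq(1-o(1))\exp(-(1+\delta_0)\tfrac{np}{2\sigma^2})$, uniformly over all estimators. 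Finally, every multiplicative constant lost above ($\tfrac{n-1}{n}$, the balancing factor, the $(1-o(1))$ from $\mathbb{P}(\mathcal E)$) is absorbed by enlarging the constant in $\delta$, using once more that a constant factor costs only $O(\sigma^2/(np))=O(\sqrt{\sigma^2/(np)})$ in the exponent; this gives the stated $\delta=C\sqrt{(\log n+\sigma^2)/(np)}$.

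I expect the main difficulty to be not any single inequality but the second‑order bookkeeping: each approximation — $d_i\approx np$, the polynomial prefactor in the Gaussian tail, the constant‑factor losses, and the sign‑alignment correction — must be shown to contribute at most $O(\sqrt{(\log n+\sigma^2)/(np)})$ to the exponent. In particular, the maximum vertex degree of the Erdős–Rényi graph must be controlled uniformly over $i$, so that the coefficient of $\tfrac{np}{2\sigma^2}$ is genuinely $1+o(1)$ rather than a fixed constant larger than $1$.
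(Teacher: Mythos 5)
Your proposal is correct, and its analytic core coincides with the paper's: both arguments reduce the problem to a binary Gaussian test per coordinate conditional on the graph, based on the same aligned statistic $\sum_{k}z_kA_{ik}Y_{ik}\sim\n(z_id_i,\sigma^2 d_i)$, then control the degrees $d_i$ on a high-probability event (the paper via its Lemma \ref{lem:ER-graph}, you via Chernoff plus a union bound), apply a sharp Gaussian lower tail to get $\exp\left(-(1+o(1))\frac{np}{2\sigma^2}\right)$, and absorb all polynomial prefactors and constant losses into $\delta=C\sqrt{\frac{\log n+\sigma^2}{np}}$. Where you diverge is the packaging of the reduction. The paper proves only the matrix-loss bound, by lower bounding the Bayes risk with a uniform prior, decomposing it over pairs $(j,k)$, and invoking Le Cam's two-point bound together with Neyman--Pearson to turn $\inf_{\wh T}\sum_{z_j,z_k}\mathbb{E}_z|\wh T-z_jz_k|^2$ into exactly the testing error above; the vector-loss bound then follows in one line from $n^{-2}\fnorm{\wh z\wh z^{\T}-zz^{\T}}^2\leq 2\ell(\wh z,z)$, which sidesteps the global sign ambiguity entirely. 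You instead run a genie-aided per-vertex argument for both losses directly: for the matrix loss via the indicator bound on $(\wh Z_{ik}-z_iz_k)^2$ and the observation that $\mathrm{sign}(z_k\wh Z_{ik})$ is a legitimate genie-aided estimator of $z_i$, and for the vector loss via the leave-one-out alignment $\hat a_i$ plus a balancing argument against the event $|\langle\wh z,z\rangle|\leq 1$. Both routes are valid; the paper's treatment of the vector loss is considerably lighter (no alignment bookkeeping), while your per-vertex genie argument is slightly more self-contained in that it does not pass through the matrix-to-vector loss inequality and makes the role of the oracle error $\mathbb{E}_A\bar\Phi(\sqrt{d_i}/\sigma)$ explicit; the only points to tidy in a full write-up are tie-breaking when $\wh Z_{ik}=0$ or $\langle\wh z,z\rangle=\pm1$, and the case $d_i=0$ (where the testing error is $1/2$, so the bound only improves).
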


When $p=1$, the above result has been proved by \cite{fei2020achieving}, but the lower bound result for a general $p$ is unknown in the literature. Compared with Theorem \ref{thm:lower}, the minimax lower bound for $\mathbb{Z}_2$ synchronization is an exponential function of the signal-to-noise ratio, a consequence of the discreteness of the problem.

To estimate $z^*\in\{-1,1\}^n$, the MLE is defined as the global maximizer of the following optimization problem
\begin{equation}
\max_{z\in\{-1,1\}^n}z^{\T}(A\circ Y)z. \label{eq:MLE-z2}
\end{equation}
Similar to (\ref{eq:SDP-complex-p}), a convex relaxation of (\ref{eq:MLE-z2}) leads to the following SDP,
\begin{equation}
\max_{Z=Z^{\T}\in\mathbb{R}^{n\times n}}\Tr((A\circ Y)Z)\quad\text{subject to}\quad \diag(Z)=I_n\text{ and }Z\succeq 0. \label{eq:SDP-real}
\end{equation}
The SDP for $\mathbb{Z}_2$ synchronization is almost in the exact form of (\ref{eq:SDP-complex-p}). The only difference between (\ref{eq:SDP-real}) and (\ref{eq:SDP-complex-p}) is that the optimization (\ref{eq:SDP-real}) is over real symmetric matrices and the optimization of (\ref{eq:SDP-complex-p}) is over complex Hermitian matrices.

Our analysis of the SDP (\ref{eq:SDP-real}) for $\mathbb{Z}_2$ synchronization relies on a non-convex characterization that is similar to (\ref{eq:SDP-nonconvex}). For any $Z$ that is a positive semi-definite real symmetric matrix, it admits a decomposition $Z=V^{\T}V$ for some $V\in\mathbb{R}^{n\times n}$. By writing the $j$th column of $V$ as $V_j$, we can replace the constraint $\diag(Z)=I_n$ by $\|V_j\|^2=1$ for all $j\in[n]$. Then, an equivalent non-convex form of the SDP (\ref{eq:SDP-real}) is
\begin{equation}
\max_{V\in\mathbb{R}^{n\times n}}\Tr((A\circ Y)V^{\T}V)\quad\text{subject to}\quad \|V_j\|^2=1\text{ for all }j\in[n]. \label{eq:SDP-nonconvex-real}
\end{equation}
We will study the solution of (\ref{eq:SDP-nonconvex-real}) using the following loss function,
$$\ell(\wh{V},z)=\min_{a\in\mathbb{R}^n:\|a\|^2=1}\frac{1}{n}\sum_{j=1}^n\|\wh{V}_j-z_ja\|^2.$$
By the same argument that leads to (\ref{eq:fixed-point}), we know that if $\wh{V}$ is a global maximizer of (\ref{eq:SDP-nonconvex-real}), it will satisfy the equation $\wh{V}=f(\wh{V})$, where $f:\mathbb{R}_1^{n\times n}\rightarrow\mathbb{R}_1^{n\times n}$ is a map such that the $j$th column of $f(\wh{V})$ is given by
\begin{align*}
[f(\wh{V})]_j=
\begin{cases}\frac{\sum_{k\in[n]\backslash\{j\}}A_{jk}{Y}_{jk}\wh{V}_k^{(t-1)}}{\left\|\sum_{k\in[n]\backslash\{j\}}A_{jk}{Y}_{jk}\wh{V}_k^{(t-1)}\right\|}, & \sum_{k\in[n]\backslash\{j\}}A_{jk}{Y}_{jk}\wh{V}_k^{(t-1)} \neq 0,\\
\wh{V}_j^{(t-1)}, & \sum_{k\in[n]\backslash\{j\}}A_{jk}{Y}_{jk}\wh{V}_k^{(t-1)}= 0.
\end{cases}
\end{align*}
Here, we use the notation $\mathbb{R}_1^{n\times n}$ for set of $n\times n$ real matrices whose columns all have unit norms.
For each $j\in[n]$, define the random variable
$$U_j=\frac{\sigma}{(n-1)p}\sum_{k\in[n]\backslash\{j\}}z_k^*A_{jk}W_{jk}.$$
The following lemma characterizes the evolution of the loss $\ell(V,z^*)$ through the map $f$.
\begin{lemma}\label{lem:critical-z2}
Assume $\frac{np}{\sigma^2} \geq c_2$ and $\frac{np}{\log n}\geq c_2$ for some sufficiently large constants $c_1,c_2>0$. Then, for any $\gamma\in [0,1/16)$, we have
\begin{eqnarray*}
&& \mathbb{P}\left(\ell(f(V),z^*)\leq \frac{1}{2}\ell(V,z^*)+\frac{4}{n}\sum_{j=1}^n\mathbb{I}\{|U_j|>1-\delta\}\text{ for all }V\in\mathbb{R}_1^{n\times n}\text{ such that }\ell(V,z^*)\leq \gamma\right) \\
&\geq& 1-2n^{-9},
\end{eqnarray*}
where $\delta=C\left(\sqrt{\gamma} + \sqrt{\frac{\log n+\sigma^2}{np}}\right)$ for some constant $C>0$.
\end{lemma}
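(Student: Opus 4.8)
\emph{Proof plan.} The plan is to reduce to $z^*=\mathbf 1$ and then run an iterative-contraction argument in the spirit of Lemma~\ref{lem:critical}. Conjugating the data by $D=\diag(z^*)$ turns the model into $DYD=\mathbf 1\mathbf 1^\T+\sigma DWD$ with $DWD\dist W$, leaves the graph $A$ and the numbers $\{|U_j|\}$ unchanged, and intertwines the map $f$ with the column sign-flip $V\mapsto VD$ while preserving $\ell(\cdot,z^*)$, so one may assume $z^*=\mathbf 1$. Fix $V\in\mathbb R_1^{n\times n}$ with $\ell:=\ell(V,\mathbf 1)\le\gamma$, let $a=\sum_j V_j/\|\sum_j V_j\|$ be the optimal aligner (well defined, since $\ell<1/16$ forces $\sum_j V_j\neq0$), and put $h_j=V_j-a$, so $\tfrac1n\sum_j\|h_j\|^2=\ell$. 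From $\|V_j\|=\|a\|=1$ one reads off $\langle h_j,a\rangle=-\tfrac12\|h_j\|^2$, hence $\langle V_j,a\rangle=1-\tfrac12\|h_j\|^2$ and $\sum_j h_j=-\tfrac{n\ell}{2}a$; writing $h_j^\perp=h_j+\tfrac12\|h_j\|^2a$ for the component of $h_j$ orthogonal to $a$, I get the key identity $\sum_j h_j^\perp=0$, together with $\|h_j^\perp\|\le\|h_j\|\le2$.

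Next I expand one step of the iteration: with $g_j=\sum_{k\neq j}A_{jk}Y_{jk}V_k$, substituting $V_k=a+h_k$, $Y_{jk}=1+\sigma W_{jk}$ and splitting along $a$ and $a^\perp$ gives $g_j=\lambda_j a+\rho_j$ with $\rho_j\perp a$,
\[
\lambda_j=d_j+(n-1)pU_j-\xi_j,\qquad \xi_j:=\tfrac12\sum_{k\neq j}A_{jk}Y_{jk}\|h_k\|^2,\qquad \rho_j=\sum_{k\neq j}A_{jk}Y_{jk}h_k^\perp,
\]
where $d_j=\sum_{k\neq j}A_{jk}$ and $\sigma\sum_{k\neq j}A_{jk}W_{jk}=(n-1)pU_j$ (since $z_k^*=1$). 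Since $f(V)_j=g_j/\|g_j\|$ and $a\perp\rho_j$, an elementary computation gives $\|f(V)_j-a\|^2\le\|\rho_j\|^2/\lambda_j^2$ whenever $\lambda_j>0$, and $\|f(V)_j-a\|^2\le4$ unconditionally (the latter also covers the degenerate case $g_j=0$, where $f(V)_j=V_j$ and $\|f(V)_j-a\|^2=\|h_j\|^2\le4$).

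The analytic core consists of two estimates that hold, for every $V\in\mathbb R_1^{n\times n}$ with $\ell(V,\mathbf 1)\le\gamma$ simultaneously, on one $V$-free event $\mathcal A_0$ with $\mathbb P(\mathcal A_0)\ge1-2n^{-9}$: on $\mathcal A_0$ one has $\max_j|d_j-(n-1)p|\le C\sqrt{np\log n}$ (Bernstein plus a union bound) and $\opnorm{A-p(J-I)}\vee\opnorm{A\circ W}\le C\sqrt{np}$ (standard spectral bounds for sparse random matrices, valid since $np/\log n$ is large), where $J=\mathbf 1\mathbf 1^\T$. Stacking the $\rho_j$ into $(A\circ Y)H^\perp$, with $H^\perp$ the matrix of rows $(h_k^\perp)^\T$, and using $JH^\perp=0$ together with $A\circ Y-pJ=(A-p(J-I))-pI+\sigma(A\circ W)$, I get $\sum_j\|\rho_j\|^2=\fnorm{(A\circ Y-pJ)H^\perp}^2\le C(1+\sigma)^2np\sum_k\|h_k^\perp\|^2\le C(1+\sigma)^2n^2p\ell$. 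Stacking the $\xi_j$ into $\xi=\tfrac12(A\circ Y)w$ with $w=(\|h_k\|^2)_k$ — where now $Jw=(n\ell)\mathbf 1$ does \emph{not} vanish — and bounding $\|w\|\le2\sqrt{n\ell}$ via $\max_k\|h_k\|^2\le4$, I get $\sum_j\xi_j^2\le C(p^2n^3\ell^2+(1+\sigma)^2n^2p\ell)$. The extra $p^2n^3\ell^2$ term — the $\mathbf 1$-component of $w$ amplified by the top eigenvalue $\asymp np$ of $A\circ Y$ — is precisely what produces the $\sqrt\gamma$ in $\delta$.

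Finally I partition $[n]=\mathcal B\sqcup\mathcal E\sqcup\mathcal G$ with $\mathcal B=\{j:|U_j|>1-\delta\}$, $\mathcal E=\{j\notin\mathcal B:|\xi_j|>\tfrac1{10}(n-1)p\delta\}$, $\mathcal G=[n]\setminus(\mathcal B\cup\mathcal E)$, and bound $\sum_j\|f(V)_j-a\|^2$ piece by piece. On $\mathcal B$: at most $4|\mathcal B|$. On $\mathcal G$: $j\notin\mathcal B$ gives $1+U_j\ge\delta$, the degree bound gives $d_j\ge(1-\tfrac\delta{10})(n-1)p$ once $\delta\gtrsim\sqrt{\log n/(np)}$ (this forces the $\sqrt{\log n/(np)}$ term of $\delta$), and $|\xi_j|\le\tfrac1{10}(n-1)p\delta$, so $\lambda_j\ge\tfrac45(n-1)p\delta>0$; hence $\sum_{j\in\mathcal G}\|f(V)_j-a\|^2\le(\tfrac45(n-1)p\delta)^{-2}\sum_j\|\rho_j\|^2\lesssim\tfrac{(1+\sigma)^2\ell}{p\delta^2}$, which is $\le\tfrac{n\ell}{8}$ as soon as $\delta^2\gtrsim\tfrac{1+\sigma^2}{np}$ (the $\sigma^2$ term). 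On $\mathcal E$: Markov gives $|\mathcal E|\le(\tfrac1{10}(n-1)p\delta)^{-2}\sum_j\xi_j^2\lesssim\tfrac{\ell}{\delta^2}(n\ell+\tfrac{1+\sigma^2}{p})\le\tfrac{n\ell}{32}$ once $\delta^2\gtrsim\gamma+\tfrac{1+\sigma^2}{np}$ (recall $\ell\le\gamma$), so this piece is $\le\tfrac{n\ell}{8}$. Summing and dividing by $n$, $\ell(f(V),\mathbf 1)\le\tfrac4n|\mathcal B|+\tfrac\ell4$, which is the asserted bound (with $\tfrac12$ in place of the stronger $\tfrac14$), and every smallness requirement on $\delta$ is met by $\delta=C(\sqrt\gamma+\sqrt{(\log n+\sigma^2)/(np)})$ for $C$ large. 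The one genuinely delicate point is this simultaneous calibration of $\delta$ — large enough to keep $\lambda_j$ bounded below on $\mathcal G$, small enough that the $\mathcal E$- and $\mathcal G$-contributions both sit under the $\tfrac12\ell(V,z^*)$ budget (it is the $p^2n^3\ell^2$ term of $\sum_j\xi_j^2$ that forces $\delta^2\gtrsim\gamma$) — together with arranging that the random events are $V$-independent so the final bound is uniform over $V$; everything else is bookkeeping.
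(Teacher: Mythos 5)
Your argument is correct, and it follows the same overall strategy as the paper's proof: a one-step analysis of the map $f$, a split of $\sum_{k\neq j}A_{jk}Y_{jk}V_k$ into a component along a fixed alignment direction and an orthogonal remainder, the projection bound of Lemma~\ref{lem:middle}, Markov/operator-norm control of the $V$-dependent error terms, retention of the indicators $\mathbb{I}\{|U_j|>1-\delta\}$, and the same calibration $\delta\asymp\sqrt{\gamma}+\sqrt{(\log n+\sigma^2)/(np)}$ enforced (as in the paper's choice $\rho^2\asymp\frac{\log n+\sigma^2}{np}\vee\gamma$) by the degree fluctuations and by the quadratic term in $\ell$ coming from the $\mathbf 1$-direction. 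The main technical difference is your centering: the paper centers $V_k$ at $\bar z_k^* b$ with $b$ the aligner of $V$ and then passes to $a_0=b+\frac{1}{n-1}\sum_k z_k^*(V_k-z_k^*b)$, which leaves a noise-on-signal term $G_j$ in the numerator weighted by $\|(I_n-aa^{\T})b\|$ and therefore requires the additional $\chi^2$-type event (\ref{eq:hollow5-z2}) via Lemma~\ref{lem:talagrand}; you instead center at the exact optimal aligner $a$ of $V$, which gives the identities $\langle h_j,a\rangle=-\tfrac12\|h_j\|^2$ and $\sum_j h_j^\perp=0$, makes the dangerous noise-along-signal term appear only in the denominator $\lambda_j$ (through $U_j$), and pushes the residual bookkeeping into the scalar correction $\xi_j$, handled by a separate Markov set $\mathcal E$. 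This buys a slightly leaner event list (degree concentration plus two operator-norm bounds suffice) and an exactly orthogonal $\rho_j$, at the cost of tracking $\xi_j$; both routes deliver the contraction factor and the same $\frac{4}{n}\sum_j\mathbb{I}\{|U_j|>1-\delta\}$ additive term, so the difference is one of technical organization rather than substance.
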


Lemma \ref{lem:critical-z2} immediately implies that for any $\wh{V}$ that satisfies the fixed-point equation $\wh{V}=f(\wh{V})$ and the crude error bound $\ell(\wh{V},z^*)\leq\gamma <1/16$, we have
\begin{equation}
\ell(\wh{V},z^*) \leq \frac{8}{n}\sum_{j=1}^n\mathbb{I}\{|U_j|>1-\delta\},\label{eq:bounded-by-ind}
\end{equation}
with high probability. The property of the random variable $\frac{8}{n}\sum_{j=1}^n\mathbb{I}\{|U_j|>1-\delta\}$ can be easily analyzed, and we present the following lemma.

\begin{lemma}\label{lem:stat-error-z2}
Assume $\frac{np}{\sigma^2}\geq c_1$ and $\frac{np}{\log n}\geq c_2$ for some sufficiently large constants $c_1,c_2>0$. Then, for any $\delta\in(0,1)$, we have
$$\frac{8}{n}\sum_{j=1}^n\mathbb{I}\{|U_j|>1-\delta\}\leq \exp\left(-(1-\delta')\frac{np}{2\sigma^2}\right),$$
with probability at least $1- \exp\left(-\sqrt{\frac{np}{\sigma^2}}\right) - n^{-9}$, where $\delta'=C\br{\delta +\sqrt{\frac{\log n}{np}}}$ for some constant $C>0$. If we additionally assume $(1-\delta')\frac{np}{2\sigma^2}>\log n$, then
$$\frac{8}{n}\sum_{j=1}^n\mathbb{I}\{|U_j|>1-\delta\}=0,$$
with probability at least $1- \exp\left(-\sqrt{\frac{np}{\sigma^2}}\right) - n^{-9}$. 
\end{lemma}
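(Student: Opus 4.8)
\emph{Proof plan.} The plan is to condition on the random graph $A$: given $A$, each $U_j$ is an explicit centered complex Gaussian, so the events $\{|U_j|>1-\delta\}$ have exactly computable conditional probabilities, and the whole statement then drops out of a one-line first-moment (Markov) estimate. A pleasant feature of this route is that it never confronts the (weak) dependence among $U_1,\dots,U_n$.

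First I would pin down the conditional law of $U_j$. Since $z_k^*\in\{-1,1\}$, $A_{jk}\in\{0,1\}$ and $W_{jk}\sim\cn(0,1)$, for each fixed $j$ the sum $\sum_{k\in[n]\backslash\{j\}}z_k^*A_{jk}W_{jk}$ is, conditionally on $A$, distributed as $\cn(0,d_j)$, where $d_j:=\sum_{k\in[n]\backslash\{j\}}A_{jk}$ is the degree of vertex $j$ (a sign flip does not alter the law of a centered complex Gaussian, and independent complex Gaussians add in variance). As $|\cn(0,1)|^2$ is a standard exponential, $|U_j|^2\mid A$ has the law of $\frac{\sigma^2 d_j}{(n-1)^2p^2}\,\mathrm{Exp}(1)$, so that
\[
\mathbb{P}\!\left(|U_j|>1-\delta\mid A\right)=\exp\!\left(-\frac{(1-\delta)^2(n-1)^2p^2}{\sigma^2 d_j}\right).
\]
Next I would control the degrees. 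Since $d_j\sim\mathrm{Bin}(n-1,p)$, Bernstein's inequality together with a union bound (using $np/\log n\ge c_2$) produces an $A$-measurable event $\mathcal{E}$ with $\mathbb{P}(\mathcal{E}^{\complement})\le n^{-9}$ on which $d_j\le(n-1)p\bigl(1+C'\sqrt{\log n/(np)}\bigr)$ for every $j$. Substituting this into the display (and using $1/n\le\sqrt{\log n/(np)}$ to swallow the $(n-1)/n$ factor, together with $(1-\delta)^2\ge1-2\delta$ and $\frac{1}{1+x}\ge1-x$) yields, on $\mathcal{E}$, the deterministic bound
\[
\mathbb{P}\!\left(|U_j|>1-\delta\mid A\right)\le q:=\exp\!\left(-(1-\tilde\delta)\frac{np}{\sigma^2}\right),\qquad \tilde\delta:=C_0\Bigl(\delta+\sqrt{\tfrac{\log n}{np}}\Bigr),
\]
for a suitable constant $C_0$, all lower-order remainders being collected into $\tilde\delta$. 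The key point is that $(1-\tilde\delta)\frac{np}{\sigma^2}$ is, up to the $\tilde\delta$-correction, \emph{twice} the target exponent $\frac{np}{2\sigma^2}$.

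Now set $\delta':=2\tilde\delta$ (so $\delta'=C(\delta+\sqrt{\log n/(np)})$ with $C=2C_0$, of the asserted form) and $q':=\exp(-(1-\delta')\frac{np}{2\sigma^2})$; then the identity $q=q'\exp(-\frac{np}{2\sigma^2})$ holds by direct computation. Because $\mathcal{E}$ is $A$-measurable, $\mathbb{E}\bigl[\mathbb{I}_{\mathcal{E}}\sum_{j=1}^n\mathbb{I}\{|U_j|>1-\delta\}\bigr]\le nq=nq'\exp(-\frac{np}{2\sigma^2})$. For the first assertion, Markov's inequality applied to $\mathbb{I}_{\mathcal{E}}\sum_j\mathbb{I}\{|U_j|>1-\delta\}$ at level $nq'/8$ gives
\[
\mathbb{P}\Bigl(\tfrac{8}{n}\sum_{j=1}^n\mathbb{I}\{|U_j|>1-\delta\}>q'\Bigr)\le\frac{8q}{q'}+\mathbb{P}(\mathcal{E}^{\complement})=8e^{-np/(2\sigma^2)}+n^{-9}\le e^{-\sqrt{np/\sigma^2}}+n^{-9},
\]
the last inequality using $np/\sigma^2\ge c_1$. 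For the second assertion, under the extra hypothesis $(1-\delta')\frac{np}{2\sigma^2}>\log n$ we have $nq=\exp\bigl(\log n-(2-\delta')\frac{np}{2\sigma^2}\bigr)<\exp(-\frac{np}{2\sigma^2})\le\exp(-\sqrt{np/\sigma^2})$, hence
\[
\mathbb{P}\Bigl(\sum_{j=1}^n\mathbb{I}\{|U_j|>1-\delta\}\ge1\Bigr)\le nq+\mathbb{P}(\mathcal{E}^{\complement})\le e^{-\sqrt{np/\sigma^2}}+n^{-9},
\]
i.e.\ $\sum_j\mathbb{I}\{|U_j|>1-\delta\}=0$ with the claimed probability.

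The only delicate point is the bookkeeping that produces the second display: the entire value of this lemma is the sharp constant $\tfrac12$ in the exponent, so one must check that after the degree bound and the $(1-\delta)^2$ factor are accounted for, the exponent $(1-\tilde\delta)\frac{np}{\sigma^2}$ still equals $\frac{np}{2\sigma^2}$ plus a genuinely non-negligible surplus $\frac{np}{2\sigma^2}$ --- it is exactly this surplus that simultaneously makes the Markov step lose nothing and forces the nonnegative integer $\sum_j\mathbb{I}\{\cdot\}$ down to $0$. There is no other real obstacle; in particular the dependence among the $U_j$'s, which one might expect to be the hard part, never surfaces, since only first moments are ever used.
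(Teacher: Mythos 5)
Your overall strategy---condition on the graph $A$, control the degrees via Bernstein on an event of probability $1-n^{-9}$, and then run a first-moment/Markov argument that never touches the dependence among the $U_j$'s---is exactly the route the paper takes. But there is a genuine error at the very first step: in the $\mathbb{Z}_2$ synchronization setting of this lemma the noise is \emph{real}, $W_{jk}\sim\n(0,1)$, not $\cn(0,1)$. Consequently, conditionally on $A$, $U_j\sim\n\bigl(0,\tfrac{\sigma^2 d_j}{(n-1)^2p^2}\bigr)$ and the correct tail is
$\mathbb{P}\bigl(|U_j|>1-\delta\mid A\bigr)\le 2\exp\bigl(-\tfrac{(1-\delta)^2(n-1)^2p^2}{2\sigma^2 d_j}\bigr)$,
whose exponent on the good degree event is $(1-\tilde\delta)\tfrac{np}{2\sigma^2}$ --- i.e.\ already \emph{equal} to the target rate, not twice it. Your exponential-law computation $\mathbb{P}(|U_j|>1-\delta\mid A)=\exp\bigl(-\tfrac{(1-\delta)^2(n-1)^2p^2}{\sigma^2 d_j}\bigr)$ is only valid for complex Gaussians, and the entire mechanism you yourself identify as the crux --- the ``surplus'' factor $e^{-np/(2\sigma^2)}$ in $q=q'e^{-np/(2\sigma^2)}$, which is supposed to both pay for the Markov step ($8q/q'=8e^{-np/(2\sigma^2)}\le e^{-\sqrt{np/\sigma^2}}$) and force $\sum_j\mathbb{I}\{\cdot\}=0$ via $nq<e^{-np/(2\sigma^2)}$ --- evaporates once the correct tail is used. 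As written, both displayed probability estimates in your last two steps are false.

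The argument is repairable, and the repair is what the paper does: with the real tail, the conditional expectation of $\tfrac8n\sum_j\mathbb{I}\{|U_j|>1-\delta\}$ is at most $\exp\bigl(-(1-\bar\delta)\tfrac{np}{2\sigma^2}\bigr)$ with $\bar\delta\asymp\delta+\sqrt{\log n/(np)}$, and the slack for Markov must now be bought by enlarging the deficit in the exponent itself, e.g.\ comparing against the threshold $\exp\bigl(-(1-\bar\delta-\sqrt{2\sigma^2/(np)})\tfrac{np}{2\sigma^2}\bigr)$, which yields failure probability $\exp\bigl(-c\sqrt{np/\sigma^2}\bigr)$; note this injects an extra term of order $\sqrt{\sigma^2/(np)}$ into $\delta'$, which is harmless in the downstream application but is not a free by-product as in your version. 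For the second assertion one can no longer argue $nq\ll1$ from a surplus; instead, use that $\tfrac8n\sum_j\mathbb{I}\{\cdot\}$ only takes values in multiples of $8/n$, so once the high-probability bound from the first part falls below $8/n$ (which is what $(1-\delta')\tfrac{np}{2\sigma^2}>\log n$ guarantees), the sum is forced to be exactly $0$ --- this quantization step, or an explicit slack-carrying union bound, is needed to replace your $nq<e^{-np/(2\sigma^2)}$ computation.
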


We also need a lemma to establish a crude error bound for $\ell(\wh{V},z^*)$.
\begin{lemma}\label{lem:SDP-crude-z2}
Assume $\frac{np}{\log n}\geq c$ for some constant $c>0$. Let $\wh{Z}=\wh{V}^{\T}\wh{V}$ be a global maximizer of the SDP (\ref{eq:SDP-real}). Then, there exits some constant $C>0$ such that
$$\ell(\wh{V},z^*)\leq C\sqrt{\frac{\sigma^2+1}{np}},$$
with probability at least $1-n^{-9}$.
\end{lemma}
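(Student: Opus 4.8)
The plan is to run the standard ``basic inequality'' argument for SDP estimators, exactly parallel to the proof of Lemma~\ref{lem:SDP-crude} in the complex case: since only the crude rate $\sqrt{(\sigma^2+1)/(np)}$ is needed (to place $\wh V$ in the region where Lemma~\ref{lem:critical-z2} applies), it suffices to combine operator-norm control of the random fluctuations with the trivial nuclear-norm bound on $\wh Z-z^*z^{*\T}$, and no sharp constant is required.

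First, because $z^*z^{*\T}$ is feasible for (\ref{eq:SDP-real}) (its diagonal equals $I_n$ and it is positive semidefinite) and $\wh Z$ is a global maximizer, the basic inequality gives $\Tr((A\circ Y)G)\ge 0$ for $G:=\wh Z-z^*z^{*\T}$, and $\Tr(G)=0$ since $\wh Z$ and $z^*z^{*\T}$ both have trace $n$. Using the matrix identity $A\circ Y=A\circ(z^*z^{*\T})+\sigma(A\circ W)$ together with $\E[A\circ(z^*z^{*\T})]=p(z^*z^{*\T}-I_n)$ and $\Tr(G)=0$, I would write
\[
0\le\Tr((A\circ Y)G)=p\,z^{*\T}Gz^*+\Tr\!\big(((A-\E A)\circ(z^*z^{*\T}))G\big)+\sigma\,\Tr((A\circ W)G).
\]
Since $\wh Z\succeq 0$ and $\Tr(\wh Z)=n$ we have $\opnorm{\wh Z}\le n$, hence $z^{*\T}Gz^*=z^{*\T}\wh Zz^*-n^2\le 0$, so the display above yields
\[
p\,(n^2-z^{*\T}\wh Zz^*)\le\big(\opnorm{(A-\E A)\circ(z^*z^{*\T})}+\sigma\opnorm{A\circ W}\big)\,\nnorm{G},
\]
by trace duality between operator and nuclear norms. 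Here $(A-\E A)\circ(z^*z^{*\T})=\diag(z^*)(A-\E A)\diag(z^*)$ with $\diag(z^*)$ orthogonal, so its operator norm equals $\opnorm{A-\E A}$; and $\nnorm{G}\le\nnorm{\wh Z}+\nnorm{z^*z^{*\T}}=\Tr(\wh Z)+n=2n$.

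Next I would invoke the operator-norm estimates already used in the paper: under $np\gtrsim\log n$, both $\opnorm{A-\E A}\lesssim\sqrt{np}$ and $\opnorm{A\circ W}\lesssim\sqrt{np}$ hold, each with probability at least $1-n^{-10}$, so a union bound puts both on an event of probability at least $1-n^{-9}$. On this event $n^2-z^{*\T}\wh Zz^*\lesssim n^2(1+\sigma)/\sqrt{np}$. To convert this into a bound on the loss, I would use the exact identity (obtained by expanding and using $\|\wh V_j\|=1$, $z_j^*\in\{-1,1\}$, and $\|\wh Vz^*\|^2=z^{*\T}\wh V^\T\wh Vz^*=z^{*\T}\wh Zz^*$)
\[
\ell(\wh V,z^*)=\min_{\|a\|=1}\frac1n\sum_{j=1}^n\|\wh V_j-z_j^*a\|^2=2-\frac2n\sqrt{z^{*\T}\wh Zz^*}.
\]
Writing $z^{*\T}\wh Zz^*=n^2-\Delta$ with $0\le\Delta\lesssim n^2(1+\sigma)/\sqrt{np}$ and using $\sqrt{1-x}\ge 1-x$, this gives $\ell(\wh V,z^*)\le 2\Delta/n^2\lesssim(1+\sigma)/\sqrt{np}\asymp\sqrt{(\sigma^2+1)/(np)}$ provided $\Delta\le n^2$; in the complementary regime the asserted bound exceeds $2\ge\ell(\wh V,z^*)$ and so holds trivially.

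I do not expect a serious obstacle, since this is only a crude estimate and the loose bound $\nnorm{G}\le 2n$ and non-optimal constants are adequate. The one point that requires care is the bound $\opnorm{A\circ W}\lesssim\sqrt{np}$ for the graph-masked Gaussian matrix, which is not the textbook Wigner bound because the entries are heterogeneous (zeroed outside the graph) and Gaussian-unbounded; conditioning on $A$ and applying a Bandeira--van Handel-type inequality --- with the maximum degree, of order $np$ when $np\gtrsim\log n$, playing the role of the variance parameter --- delivers it, as in the corresponding step of the proof of Lemma~\ref{lem:SDP-crude}.
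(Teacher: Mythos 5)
Your proposal is correct and follows essentially the same route as the paper: the basic inequality for the SDP maximizer, trace duality with $\nnorm{\wh{Z}-z^*z^{*\T}}\leq 2n$, the operator-norm bounds $\opnorm{A-\mathbb{E}A}\lesssim\sqrt{np}$ and $\opnorm{A\circ W}\lesssim\sqrt{np}$ from Lemmas \ref{lem:ER-graph} and \ref{lem:bandeira}, and a conversion of $n^2-z^{*\T}\wh{Z}z^*=\Tr(z^*z^{*\T}(z^*z^{*\T}-\wh{Z}))$ into the loss. Your conversion via the exact identity $\ell(\wh{V},z^*)=2-\tfrac{2}{n}\sqrt{z^{*\T}\wh{Z}z^*}$ and $\sqrt{1-x}\geq 1-x$ is a slightly cleaner version of the paper's mean-vector argument (and even improves the constant), but the argument is the same in substance.
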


The results of Lemma \ref{lem:critical-z2}, Lemma \ref{lem:stat-error-z2} and Lemma \ref{lem:SDP-crude-z2} immediately imply the statistical optimality of the SDP (\ref{eq:SDP-real}).
\begin{thm}\label{thm:main-z2}
Assume $\frac{np}{\sigma^2}\geq c_1$ and $\frac{np}{\log n}\geq c_2$ for some sufficiently large constants $c_1,c_2>0$.  Let $\wh{Z}=\wh{V}^{\T}\wh{V}$ be a global maximizer of the SDP (\ref{eq:SDP-real}) and $u\in\mathbb{R}^n$ be the leading eigenvector of $\wh{Z}$. Define $\wh{z}$ with each entry $\wh{z}_j=u_j/|u_j|$. If $u_j=0$ we can take $\wh{z}_j=1$. Then, there exists some $\delta=C\br{\frac{\log n+\sigma^2}{np}}^\frac{1}{4}$ for some constant $C>0$, such that
\begin{eqnarray*}
\ell(\wh{V},z^*) &\leq& \exp\left(-(1-\delta)\frac{np}{2\sigma^2}\right), \\
\frac{1}{n^2}\fnorm{\wh{Z}-z^*z^{*\T}}^2 &\leq& \exp\left(-(1-\delta)\frac{np}{2\sigma^2}\right), \\
\ell(\wh{z},z^*) &\leq& \exp\left(-(1-\delta)\frac{np}{2\sigma^2}\right),
\end{eqnarray*}
with probability at least $1- \exp\left(-\sqrt{\frac{np}{\sigma^2}}\right) - 2n^{-9}$. 
Moreover, if we additionally assume $\sigma^2<(1-\epsilon)\frac{np}{2\log n}$ for some arbitrarily small constant $\epsilon>0$, the SDP solution $\wh{Z}$ is a rank-one matrix that satisfies $\wh{Z}=z^*z^{*\T}$ with probability at least $1- \exp\left(-\sqrt{\frac{np}{\sigma^2}}\right) -  2n^{-9}$.
\end{thm}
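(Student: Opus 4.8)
The plan is to assemble Theorem~\ref{thm:main-z2} from Lemmas~\ref{lem:critical-z2}, \ref{lem:stat-error-z2}, and \ref{lem:SDP-crude-z2}, in exactly the way Theorems~\ref{thm:main1} and \ref{thm:round-phase} were obtained in the phase synchronization case. First I would invoke Lemma~\ref{lem:SDP-crude-z2} to get, on the event it describes, the crude bound $\ell(\wh{V},z^*)\le C\sqrt{(\sigma^2+1)/(np)}$; since $\frac{np}{\sigma^2}\ge c_1$ and $\frac{np}{\log n}\ge c_2$ with $c_1,c_2$ large, this is at most some $\gamma<1/16$, and moreover $\gamma\asymp\sqrt{(\sigma^2+1)/(np)}$, so the quantity $\delta=C(\sqrt{\gamma}+\sqrt{(\log n+\sigma^2)/(np)})$ appearing in Lemma~\ref{lem:critical-z2} is of order $\big((\log n+\sigma^2)/(np)\big)^{1/4}$.

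Next, because $\wh{V}$ is a global maximizer of (\ref{eq:SDP-nonconvex-real}) it satisfies the fixed-point equation $\wh{V}=f(\wh{V})$, so I may take $V=\wh{V}$ in Lemma~\ref{lem:critical-z2}; this gives $\ell(\wh{V},z^*)\le\frac12\ell(\wh{V},z^*)+\frac4n\sum_{j=1}^n\mathbb{I}\{|U_j|>1-\delta\}$, and rearranging yields the bound (\ref{eq:bounded-by-ind}), $\ell(\wh{V},z^*)\le\frac8n\sum_{j=1}^n\mathbb{I}\{|U_j|>1-\delta\}$. Applying Lemma~\ref{lem:stat-error-z2} to the right-hand side produces $\ell(\wh{V},z^*)\le\exp\big(-(1-\delta')\frac{np}{2\sigma^2}\big)$ with $\delta'=C(\delta+\sqrt{\log n/(np)})$; absorbing $\delta$ and $\delta'$ into a single $\delta=C\big((\log n+\sigma^2)/(np)\big)^{1/4}$ gives the first displayed bound, and a union bound over the events of the three lemmas yields the stated failure probability.

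The Frobenius bound follows from the real analogue of Lemma~\ref{lem:loss-relation}, $\frac{1}{n^2}\fnorm{\wh{V}^{\T}\wh{V}-z^*z^{*\T}}^2\le 2\ell(\wh{V},z^*)$, the harmless constant $2$ being absorbed into the exponent since $2\exp\big(-(1-\delta)\frac{np}{2\sigma^2}\big)=\exp\big(-(1-\delta'')\frac{np}{2\sigma^2}\big)$ with $\delta''=\delta+\frac{2\sigma^2\log2}{np}$ and $\frac{\sigma^2}{np}\le1/c_1$. For $\ell(\wh{z},z^*)$ I would use a Davis--Kahan argument: $z^*z^{*\T}$ has a single nonzero eigenvalue $n$, with eigenvector $z^*/\sqrt n$ and spectral gap $n$, so the leading eigenvector $u$ of $\wh{Z}$ satisfies $\min_{s\in\{\pm1\}}\norm{u-sz^*/\sqrt n}^2\lesssim\frac{1}{n^2}\fnorm{\wh{Z}-z^*z^{*\T}}^2$; whenever $\sgn(u_j)\ne s\,\sgn(z^*_j)$ one has $|u_j-sz^*_j/\sqrt n|\ge1/\sqrt n$, so the misclassification proportion, which equals $\frac14\ell(\wh{z},z^*)$, is at most a constant multiple of $\frac{1}{n^2}\fnorm{\wh{Z}-z^*z^{*\T}}^2$, and the exponential rate is again preserved. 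No sharp-constant rounding analysis is needed here, in contrast to the phase synchronization case, precisely because the rate is exponential and constant prefactors vanish into the exponent.

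Finally, for exact recovery, under the extra hypothesis $\sigma^2<(1-\epsilon)\frac{np}{2\log n}$ one has $\frac{\sigma^2}{np}\to0$, hence $\delta'<\epsilon$ for large $n$ (taking $c_1,c_2$ large in terms of $\epsilon$), and therefore $(1-\delta')\frac{np}{2\sigma^2}>(1-\delta')\frac{\log n}{1-\epsilon}>\log n$; the second half of Lemma~\ref{lem:stat-error-z2} then gives $\frac8n\sum_{j=1}^n\mathbb{I}\{|U_j|>1-\delta\}=0$ on its event, so $\ell(\wh{V},z^*)=0$ by (\ref{eq:bounded-by-ind}). Then the minimizing $a$ in the definition of $\ell(\wh{V},z^*)$ satisfies $\wh{V}_j=z^*_ja$ for every $j$, whence $\wh{Z}_{jk}=\wh{V}_j^{\T}\wh{V}_k=z^*_jz^*_k\norm{a}^2=z^*_jz^*_k$, i.e.\ $\wh{Z}=z^*z^{*\T}$ is rank one (and correspondingly $\wh{z}=\pm z^*$). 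I expect the only genuine bookkeeping to be the constant-chasing and the eigenvector rounding; the delicate ingredients --- the exact $\frac12$-contraction with additive term $\frac4n\sum_j\mathbb{I}\{|U_j|>1-\delta\}$ in Lemma~\ref{lem:critical-z2}, and the sharp tail estimate behind Lemma~\ref{lem:stat-error-z2} --- are already available by the time we reach this theorem, and the exponential (rather than polynomial) nature of the rate makes everything downstream robust to constants.
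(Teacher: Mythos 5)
Your proposal is correct and follows essentially the same route as the paper: combine Lemma~\ref{lem:SDP-crude-z2}, the fixed-point property $\wh{V}=f(\wh{V})$ with Lemma~\ref{lem:critical-z2} to get (\ref{eq:bounded-by-ind}), then Lemma~\ref{lem:stat-error-z2} for the exponential bound, Lemma~\ref{lem:loss-relation} for the Frobenius loss, a Davis--Kahan argument for $\ell(\wh{z},z^*)$, and the second part of Lemma~\ref{lem:stat-error-z2} for exact recovery. Your sign-based rounding bound and the direct identification $\wh{V}_j=z_j^*a$ in the exact-recovery step are harmless cosmetic variants of the paper's corresponding steps.
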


While the first conclusion of the theorem is a direct consequence of Lemma \ref{lem:critical-z2}, the second conclusion can be derived from the inequality
$$\frac{1}{n^2}\fnorm{\wh{V}^{\T}\wh{V}-z^*z^{*\T}}^2\leq 2\ell(\wh{V},z^*),$$
which is established by Lemma \ref{lem:loss-relation} in Section \ref{sec:aux}. The result for the loss $\ell(\wh{z},z^*)$ is resulted from a matrix perturbation bound \citep{davis1970rotation}.

Theorem \ref{thm:main-z2} has established the minimax optimality of the SDP (\ref{eq:SDP-real}) for $\mathbb{Z}_2$ synchronization in view of the matching lower bound results in Theorem \ref{thm:lower-z2}. The special case $p=1$ recovers the results of \cite{fei2020achieving}. Moreover, under the condition $\sigma^2<(1-\epsilon)\frac{np}{2\log n}$, we show that the SDP solution $\wh{Z}$ is exactly rank-one and therefore rounding through the leading eigenvector is not needed. This result generalizes the exact recovery threshold of $\mathbb{Z}_2$ synchronization when $p=1$ \citep{bandeira2017tightness,bandeira2018random,abbe2020entrywise}. The phenomenon that SDP can achieve exact recovery has also been revealed in community detection under stochastic block models \citep{hajek2016achieving,hajek2016achieving2,perry2017semidefinite,amini2018semidefinite,chen2018convexified,li2018convex}.

We shall compare Theorem \ref{thm:main-z2} to Theorem \ref{thm:main1} and Theorem \ref{thm:round-phase}. Though the two SDPs (\ref{eq:SDP-real}) and (\ref{eq:SDP-complex-p}) have the same type of constraints, the difference of the domain implies two types of convergence rates $\exp\left(-(1-o(1))\frac{np}{2\sigma^2}\right)$ and $(1+o(1))\frac{\sigma^2}{2np}$. It is quite surprising that the SDP (\ref{eq:SDP-real}), a continuous optimization problem, is able to achieve an exponential rate, which is typical for a discrete problem. The adaptation of the SDP (\ref{eq:SDP-real}) to the discrete structure is a consequence of the fact that both (\ref{eq:SDP-real}) and (\ref{eq:SDP-nonconvex-real}) are optimization problems over $\mathbb{R}^{n\times n}$. We make this effect explicit by bounding the statistical error by the random variable $\frac{8}{n}\sum_{j=1}^n\mathbb{I}\{|U_j|>1-\delta\}$ in Lemma \ref{lem:critical-z2}.

To close this section, we briefly discuss the implications of Lemma \ref{lem:critical-z2} on the MLE (\ref{eq:MLE-z2}) and the generalized power method defined by the iteration procedure
\begin{align}
z_j^{(t)} =
\begin{cases}
 \frac{\sum_{k\in[n]\backslash\{j\}}A_{jk}Y_{jk}z_k^{(t-1)}}{\left|\sum_{k\in[n]\backslash\{j\}}A_{jk}Y_{jk}z_k^{(t-1)}\right|}, & \sum_{k\in[n]\backslash\{j\}}A_{jk}Y_{jk}z_k^{(t-1)}\neq 0,\\
 z_j^{(t-1)}, & \sum_{k\in[n]\backslash\{j\}}A_{jk}Y_{jk}z_k^{(t-1)} =0.
\end{cases}
\label{eq:GPM-z2}
\end{align}
We note that the iteration (\ref{eq:GPM-z2}) is real-valued so that we always have $z_j^{(t)}\in\{-1,1\}$, which makes it different from (\ref{eq:GPM}). The statistical optimality of the generalized power method (\ref{eq:GPM-z2}) has been established by \cite{gao2019iterative} for $\mathbb{Z}_2$ synchronization when $p=1$. Following the same argument in Section \ref{sec:GPM-MLE}, we can embed both MLE and GPM into $\mathbb{R}_1^{n\times n}$, and thus Lemma \ref{lem:critical-z2} also implies that both MLE and GPM achieve the optimal rate $\exp\left(-(1-o(1))\frac{np}{2\sigma^2}\right)$ for a general $p$ as well. Just as what we have for phase synchronization, the analyses of MLE, GPM, and SDP for $\mathbb{Z}_2$ synchronization are all based on Lemma \ref{lem:critical-z2}, and thus we have unified the three different methods from an iterative algorithm perspective.

\section{Proofs}\label{sec:pf}

This section presents the proofs of all technical results in the paper. We first list some auxiliary lemmas in Section \ref{sec:aux}. The key lemmas of the SDP analyses, Lemma \ref{lem:critical} and Lemma \ref{lem:critical-z2}, are proved in Section \ref{sec:pf-cri-lem} and Section \ref{sec:pf-cri-lem-z2}, respectively. We then prove the main results including Theorem \ref{thm:main1}, Theorem \ref{thm:round-phase} and Theorem \ref{thm:main-z2} in Section \ref{sec:pf-main}. Theorem \ref{thm:lower-z2} is proved in Section \ref{sec:pf-lower}. Finally, the proofs of Lemma \ref{lem:SDP-crude}, Lemma \ref{lem:SDP-crude-z2} and Lemma \ref{lem:stat-error-z2} are given in Section \ref{sec:pf-ini}.

\subsection{Some Auxiliary Lemmas}\label{sec:aux}

\begin{lemma}\label{lem:ER-graph}
Assume $\frac{np}{\log n}\rightarrow\infty$. Then, there exists a constant $C>0$, such that
$$\max_{j\in[n]}\left(\sum_{k\in[n]\backslash\{j\}}(A_{jk}-p)\right)^2\leq Cnp\log n,$$
and
$$\opnorm{A-\mathbb{E}A}\leq C\sqrt{np},$$
with probability at least $1-n^{-10}$.
\end{lemma}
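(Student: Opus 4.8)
The plan is to handle the two bounds separately, since both are standard concentration estimates for the random graph $A$.

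For the degree bound, I would fix $j\in[n]$ and set $S_j=\sum_{k\in[n]\backslash\{j\}}(A_{jk}-p)$, a sum of $n-1$ independent mean-zero variables bounded by $1$ in absolute value with total variance $(n-1)p(1-p)\le np$. Bernstein's inequality gives
\[
\Prob\pth{\abs{S_j}>t}\le 2\exp\pth{-\frac{t^2/2}{np+t/3}}.
\]
Choosing $t=\sqrt{a\,np\log n}$ for a large constant $a$ and using $np/\log n\to\infty$ (so that $t\ll np$ and the sub-Gaussian regime of Bernstein's bound dominates), the right-hand side is at most $2n^{-a/4}$; a union bound over $j\in[n]$ then yields $\max_{j\in[n]}S_j^2\le a\,np\log n$ with probability at least $1-2n^{1-a/4}$, which is at least $1-\tfrac12 n^{-10}$ once $a>48$. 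This step is routine.

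For the spectral-norm bound, write $M=A-\Expect A$, a symmetric matrix with zero diagonal whose above-diagonal entries are independent, centered, bounded by $1$, and of variance $p(1-p)\le p$. I would invoke a sharp non-asymptotic bound for the operator norm of a random matrix with independent entries (e.g.\ the estimate of Bandeira and van Handel, or the classical Feige--Ofek / Lei--Rinaldo results for the adjacency matrix of a sparse random graph): since $\max_{j}\sqrt{\sum_{k}\Expect M_{jk}^2}\le\sqrt{np}$ and the entries are bounded by $1$, these give $\Expect\opnorm{M}\lesssim\sqrt{np}+\sqrt{\log n}\lesssim\sqrt{np}$, the last step using $np/\log n\to\infty$. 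To upgrade the expectation bound to a high-probability statement I would use Talagrand's concentration inequality for convex Lipschitz functions on the product space $\{0,1\}^{\binom{n}{2}}$ of edge indicators: the map $A\mapsto\opnorm{A-\Expect A}$ is convex and $\sqrt{2}$-Lipschitz in the Euclidean (Frobenius) metric, so $\opnorm{M}$ exceeds its mean by more than $t$ with probability at most $4\exp(-ct^2)$ for an absolute constant $c$. Taking $t$ a large multiple of $\sqrt{\log n}$ gives $\opnorm{M}\le C\sqrt{np}$ with probability at least $1-\tfrac12 n^{-10}$, and a union bound over the two events completes the proof.

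The genuinely delicate point lies entirely in the second bound: obtaining the order $\sqrt{np}$ rather than the $\sqrt{np\log n}$ that a direct matrix Bernstein inequality or a crude $\varepsilon$-net argument would produce. The extra $\sqrt{\log n}$ factor is caused by the highest-degree vertices, whose degrees are of order $\log n$ when $np\asymp\log n$; removing it is precisely the content of the specialized spectral-norm estimates (or, in a fully self-contained proof, of a Feige--Ofek-style decomposition of the quadratic form over the net into ``light'' and ``heavy'' coordinate pairs). Since we work throughout in the regime $np\gg\log n$, the cleanest route is to cite the established mean bound and combine it with Talagrand's inequality for the fluctuations.
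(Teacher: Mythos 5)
Your proof is correct, and for the degree bound it is exactly the paper's argument (Bernstein plus a union bound over $j\in[n]$). For the spectral bound the paper simply cites Theorem 5.2 of Lei and Rinaldo (2015), which already states the high-probability bound $\opnorm{A-\Expect A}\leq C\sqrt{np}$ (with probability $1-n^{-r}$) whenever $np\gtrsim\log n$; so the two-step route you take --- an expectation bound of Bandeira--van Handel/Feige--Ofek type followed by Talagrand's convex-Lipschitz concentration to pass from the mean to a tail bound --- is a valid but unnecessary detour, since the cited result packages both steps. Your Talagrand step itself is sound ($A\mapsto\opnorm{A-\Expect A}$ is convex and $\sqrt{2}$-Lipschitz in the edge variables, and the $O(\sqrt{\log n})$ fluctuation is negligible against $\sqrt{np}$ in the regime $np\gg\log n$), and your diagnosis of where the real difficulty lies --- getting $\sqrt{np}$ rather than $\sqrt{np\log n}$, which requires the Feige--Ofek-style handling of heavy vertex pairs --- is precisely the content hidden inside the black-box citation; neither you nor the paper reproves it. The only practical difference is that citing the high-probability form directly, as the paper does, avoids having to verify the Lipschitz/convexity bookkeeping.
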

\begin{proof}
The first result is a direct application of union bound and Bernstein's inequality. The second result is Theorem 5.2 of \cite{lei2015consistency}. 
\end{proof}

The following result is essentially Corollary 3.11 of \cite{bandeira2016sharp}. The specific form that we need is from Lemma 5.2 in \cite{gao2020exact}.
\begin{lemma}[Corollary 3.11 of \cite{bandeira2016sharp}]\label{lem:bandeira}
Assume $\frac{np}{\log n}\rightarrow\infty$. Then, there exists a constant $C>0$, such that
$$\opnorm{A\circ W}\leq C\sqrt{np},$$
with probability at least $1-n^{-10}$. The result holds for both complex $W$ in Section \ref{sec:main} and real $W$ in Section \ref{sec:z2}. 
\end{lemma}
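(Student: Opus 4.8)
The plan is to exploit the Gaussian structure of $W$ by conditioning on the random graph $A$: given $A$, the matrix $A\circ W$ is a Gaussian Hermitian (resp.\ symmetric) matrix with the deterministic variance profile $\E[|(A\circ W)_{jk}|^2\mid A]=A_{jk}$, for which the sharp Bandeira--van Handel bound applies. A bounded-entry argument such as the non-commutative (matrix) Bernstein inequality is not sufficient here: the summands $A_{jk}W_{jk}(e_je_k^{\H}+e_ke_j^{\H})$ are only sub-exponential, and the sub-exponential matrix Bernstein inequality loses a factor of $\sqrt{\log n}$, giving $\opnorm{A\circ W}\lesssim\sqrt{np\log n}$ rather than the desired $\sqrt{np}$.

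First I would control the degrees: by Bernstein's inequality and a union bound (this also follows from Lemma~\ref{lem:ER-graph}), the event $\mathcal{E}_A=\{\max_{j\in[n]}\sum_{k\neq j}A_{jk}\leq 2np\}$ has probability at least $1-n^{-11}$ once $np/\log n\to\infty$. Conditioning on any $A\in\mathcal{E}_A$, I would apply Corollary 3.11 of \cite{bandeira2016sharp} to the Gaussian matrix $A\circ W$ with variance profile $A_{jk}$ --- this covers both the real symmetric and the complex Hermitian cases, since the off-diagonal variances equal $A_{jk}$ in both --- to obtain
\[
\E[\opnorm{A\circ W}\mid A]\;\lesssim\;\max_{j\in[n]}\Big(\textstyle\sum_{k\neq j}A_{jk}\Big)^{1/2}+\sqrt{\log n}\;\leq\;\sqrt{2np}+\sqrt{\log n}\;\lesssim\;\sqrt{np}
\]
on $\mathcal{E}_A$, where the last step again uses $np/\log n\to\infty$.

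Next I would add a concentration bound around this conditional expectation. The map $X\mapsto\opnorm{X}$ is $1$-Lipschitz with respect to $\fnorm{\cdot}$, and since $\fnorm{A\circ(W-W')}^2=2\sum_{j<k}A_{jk}|W_{jk}-W'_{jk}|^2$, the quantity $\opnorm{A\circ W}$ is a $\sqrt2$-Lipschitz function of the i.i.d.\ standard real Gaussian coordinates underlying $\{W_{jk}\}_{j<k}$ (in the complex case, the real and imaginary parts rescaled by $\sqrt2$). Gaussian concentration of measure then gives, conditionally on $A$,
\[
\Prob\left(\opnorm{A\circ W}>\E[\opnorm{A\circ W}\mid A]+t\mid A\right)\leq e^{-t^2/4},
\]
and taking $t=\sqrt{np}$ makes the right-hand side at most $e^{-np/4}\leq n^{-11}$ for $n$ large. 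Combining the two steps, $\opnorm{A\circ W}\leq C\sqrt{np}$ with conditional probability at least $1-n^{-11}$ on $\mathcal{E}_A$; integrating over $A$ and adding $\Prob(\mathcal{E}_A^{\complement})\leq n^{-11}$ gives the bound with probability at least $1-n^{-10}$.

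The only genuine difficulty is that the Gaussian entries of $W$ are unbounded, so no clean bounded-entry matrix inequality applies directly; the remedy is the two-stage argument above, and the points requiring care are (i) bounding the conditional expectation by $O(\sqrt{np})$ \emph{uniformly} over the high-probability event $\mathcal{E}_A$ via the variance-profile form of the Bandeira--van Handel bound, and (ii) tracking the Lipschitz constant through the complex-to-real reduction so that the concentration tail is genuinely $e^{-\Omega(np)}$ rather than $e^{-\Omega(np/\log n)}$.
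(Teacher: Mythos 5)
Your argument is correct and is essentially the paper's own route: the paper proves this lemma simply by invoking Corollary 3.11 of \cite{bandeira2016sharp} (in the form recorded as Lemma 5.2 of \cite{gao2020exact}), which is exactly the conditioning-on-$A$, variance-profile argument you spell out, with the degree bound supplying $\max_j\sum_{k\neq j}A_{jk}\lesssim np$. Your added Gaussian-concentration step is fine but not really needed, since Corollary 3.11 already comes with a sub-Gaussian tail in $\sigma_*\le 1$; the only detail worth a remark is that the complex Hermitian case should be reduced to the real symmetric one (e.g.\ by splitting into real and imaginary parts or the standard $2n\times 2n$ real embedding) rather than cited verbatim.
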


\begin{lemma}[Lemma 5.3 of \cite{gao2020exact}]\label{lem:talagrand}
Assume $\frac{np}{\log n}>c$ for some sufficiently large constant $c>0$. Consider independent random variables $X_{jk}\sim\n(0,1)$ for $1\leq j<k\leq n$. Assume $X_{kj}=X_{jk}$ for  $1\leq j<k \leq n$. Then, we have
$$\sum_{j=1}^n\left(\sum_{k\in[n]\backslash\{j\}}A_{jk}X_{jk}\right)^2\leq n(n-1)p+C\sqrt{n^3p^2\log n},$$
with probability at least $1-n^{-10}$. The same result holds if $X_{kj}=-X_{jk}$ is assumed instead for  $1\leq j<k \leq n$.
\end{lemma}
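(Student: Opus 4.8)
The plan is to condition on the random graph $A$ and recognize the quantity as a squared convex Lipschitz function of the Gaussian noise, to which Gaussian concentration applies. Concretely, I would fix a realization of $A$, collect the noise into $x=(X_{jk})_{1\le j<k\le n}\in\mathbb{R}^{\binom{n}{2}}$, and introduce the linear map $L=L(A)$ defined by $(Lx)_j=\sum_{k\in[n]\setminus\{j\}}A_{jk}X_{jk}$, so that the quantity in the lemma is $g(x):=\sum_{j=1}^n\big(\sum_{k\ne j}A_{jk}X_{jk}\big)^2=\norm{Lx}^2$. An elementary index computation gives $LL^\T=D_A+A$, with $D_A=\diag(d_1,\dots,d_n)$ and $d_j=\sum_{k\ne j}A_{jk}$; so on the event of Lemma~\ref{lem:ER-graph} (probability at least $1-n^{-10}$) one has $\opnorm{L}^2=\opnorm{LL^\T}\le\max_j d_j+\opnorm{A}\lesssim np$, using $\max_j d_j\le(n-1)p+C\sqrt{np\log n}$, $\opnorm{A}\le\opnorm{A-\mathbb{E}A}+\opnorm{\mathbb{E}A}\lesssim\sqrt{np}+np$, and $\log n\le np$. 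Hence $x\mapsto\sqrt{g(x)}=\norm{Lx}$ is convex and $\opnorm{L}$-Lipschitz.

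Next I would apply Gaussian concentration. Conditionally on $A$, $x$ is standard Gaussian, so $\mathbb{P}\big(\sqrt{g}\ge\mathbb{E}[\sqrt{g}\mid A]+t\mid A\big)\le\exp\big(-t^2/(2\opnorm{L}^2)\big)$ for all $t>0$. Because the terms $A_{jk}X_{jk}$ ($k\ne j$) use distinct coordinates of $x$ and are uncorrelated, $\mathbb{E}[g\mid A]=\Tr(L^\T L)=\sum_j d_j$, and Jensen's inequality gives $\mathbb{E}[\sqrt{g}\mid A]\le\sqrt{\sum_j d_j}$. Taking $t=C_1\sqrt{np\log n}$ with $C_1$ large and intersecting with Lemma~\ref{lem:ER-graph} then yields, with probability at least $1-2n^{-10}$,
\[
g\ \le\ \Big(\sqrt{\sum_j d_j}+C_1\sqrt{np\log n}\Big)^{2}\ =\ \sum_j d_j+2C_1\sqrt{\sum_j d_j}\,\sqrt{np\log n}+C_1^2\,np\log n.
\]
(Equivalently, one could apply the Hanson--Wright inequality directly to $g(x)=x^\T(L^\T L)x$, using $\opnorm{L^\T L}\lesssim np$ and $\fnorm{L^\T L}^2=\Tr((LL^\T)^2)=\sum_j d_j^2+2\sum_{j<k}A_{jk}\lesssim n^3p^2$, which gives $g\le\sum_j d_j+C\sqrt{n^3p^2\log n}$ and skips the squaring step.)

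The last step is to replace $\sum_j d_j=2\sum_{j<k}A_{jk}$ by $n(n-1)p$. This is a sum of $\binom{n}{2}$ i.i.d.\ $\mathrm{Bernoulli}(p)$ variables, so Bernstein's inequality gives $\sum_j d_j\le n(n-1)p+C\sqrt{n^2p\log n}$ with probability at least $1-n^{-10}$. Under $np/\log n\to\infty$ one checks that $\sqrt{n^2p\log n}=o(\sqrt{n^3p^2\log n})$, $np\log n=o(\sqrt{n^3p^2\log n})$, and the cross term satisfies $2C_1\sqrt{\sum_j d_j}\sqrt{np\log n}\lesssim\sqrt{n^2p}\,\sqrt{np\log n}=\Theta(\sqrt{n^3p^2\log n})$, so all three corrections are $O(\sqrt{n^3p^2\log n})$. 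A union bound over the (at most three) failure events, with the constants in the Bernstein and Gaussian tails chosen so that the total probability is at most $n^{-10}$, then gives $g\le n(n-1)p+C\sqrt{n^3p^2\log n}$. For the antisymmetric case $X_{kj}=-X_{jk}$, the same computation produces $LL^\T=D_A-A$, which has the identical operator-norm bound and the unchanged conditional mean $\sum_j d_j$, so the argument carries over verbatim.

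The hard part is the sharp leading constant $n(n-1)p$: crude estimates lose it. Cauchy--Schwarz, for example, gives only $g\le\max_j d_j\cdot 2\sum_{j<k}A_{jk}X_{jk}^2\asymp n^3p^2$, which is too large by a factor $np$. The whole argument hinges on the facts that $\mathbb{E}[g\mid A]$ equals exactly $\sum_j d_j$ (hence concentrates at $n(n-1)p$) and that each fluctuation --- the $\sqrt{np\log n}$-deviation of $\sqrt{g}$, the cross term created by squaring, and the binomial fluctuation of the edge count --- is of strictly smaller order than $n(n-1)p$ and bounded by $\sqrt{n^3p^2\log n}$. Carefully verifying these orders under $np\gg\log n$ (so that $\log n\le np$, and so on) is essentially the only work.
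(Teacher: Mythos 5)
Your argument is correct, but note that the paper itself gives no proof of this lemma: it is imported verbatim as Lemma 5.3 of \cite{gao2020exact}, whose proof (as the label \texttt{lem:talagrand} suggests) runs through a Talagrand/Gaussian-concentration argument conditional on the graph. Your proposal is in the same spirit and is a valid self-contained substitute: conditioning on $A$, identifying the quantity as $\|Lx\|^2$ with $LL^{\T}=D_A+A$ (and $D_A-A$ in the antisymmetric case), bounding $\opnorm{L}^2\lesssim np$ on the event of Lemma \ref{lem:ER-graph}, and then applying either Borell--TIS to the $\opnorm{L}$-Lipschitz map $x\mapsto\|Lx\|$ with $\mathbb{E}[\|Lx\|\mid A]\le\sqrt{\sum_j d_j}$, or Hanson--Wright to $x^{\T}L^{\T}Lx$ with $\fnorm{L^{\T}L}^2\lesssim n^3p^2$; the final replacement of $\sum_j d_j$ by $n(n-1)p$ via Bernstein, and the order checks showing all corrections are $O(\sqrt{n^3p^2\log n})$ under $np\gtrsim\log n$, are exactly right, and the conditional mean $\sum_j d_j$ is unchanged in the antisymmetric case since only per-row variances enter. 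Two cosmetic remarks: convexity of $\|Lx\|$ is not needed for the Gaussian concentration step (only the Lipschitz property), and Lemma \ref{lem:ER-graph} is stated under $np/\log n\to\infty$ while the present lemma assumes only $np/\log n>c$; the degree and spectral bounds you invoke do hold under the weaker assumption (and the paper itself uses them this way), so nothing breaks, but if you want the writeup to be airtight you should cite Bernstein and Theorem 5.2 of \cite{lei2015consistency} directly rather than the lemma as stated.
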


\begin{lemma}[Lemma 13 of \cite{gao2016optimal}]\label{lem:gaomalu}
Consider independent random variables $X_j\sim \n(0,1)$ and $E_j\sim\text{Bernoulli}(p)$. Then,
$$\mathbb{P}\left(\left|\sum_{j=1}^nX_jE_j/p\right|>t\right)\leq 2\exp\left(-\min\left(\frac{pt^2}{16n},\frac{pt}{2}\right)\right),$$
for any $t>0$.
\end{lemma}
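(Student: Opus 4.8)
The plan is a direct Chernoff bound on the moment generating function of $S:=\sum_{j=1}^n X_jE_j/p$, splitting the analysis into a sub-Gaussian regime (small $t$) and a sub-exponential regime (large $t$); the transition between these two regimes is precisely what produces the minimum in the exponent. An alternative would be to condition on $N:=\sum_{j=1}^n E_j$, exploit the conditional Gaussianity of $S$, and then control $N$ by a binomial tail inequality, but the MGF route is self-contained, so I would take it.

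First I would compute the moment generating function of a single summand. Conditioning on $E_j$ and using the Gaussian identity $\mathbb{E}\,e^{\lambda X_j}=e^{\lambda^2/2}$, for every $\lambda\in\mathbb{R}$ we have
\[
\mathbb{E}\,e^{\lambda X_jE_j/p}=(1-p)+p\,e^{\lambda^2/(2p^2)}=1+p\bigl(e^{\lambda^2/(2p^2)}-1\bigr)\le\exp\Bigl(p\bigl(e^{\lambda^2/(2p^2)}-1\bigr)\Bigr),
\]
using $1+x\le e^x$. By independence, $\mathbb{E}\,e^{\lambda S}\le\exp\bigl(np(e^{\lambda^2/(2p^2)}-1)\bigr)$. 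I would then restrict to $0<\lambda\le\sqrt{2}\,p$, so that $\lambda^2/(2p^2)\le1$, and use the elementary inequality $e^x-1\le 2x$ valid on $[0,1]$ (by convexity, $e^x-1$ lies below the chord joining $(0,0)$ and $(1,e-1)$, whose slope is $e-1<2$). This gives $\mathbb{E}\,e^{\lambda S}\le\exp(n\lambda^2/p)$, and hence by Markov's inequality
\[
\mathbb{P}(S>t)\le\exp\bigl(-\lambda t+n\lambda^2/p\bigr),\qquad\text{for all }0<\lambda\le\sqrt{2}\,p.
\]

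Next I would optimize over $\lambda$. The unconstrained minimizer of $\lambda\mapsto-\lambda t+n\lambda^2/p$ is $\lambda^\star=pt/(2n)$. If $t\le 2\sqrt{2}\,n$ then $\lambda^\star\le\sqrt{2}\,p$ is admissible, and substituting it yields $\mathbb{P}(S>t)\le\exp(-pt^2/(4n))$. If $t>2\sqrt{2}\,n$, I would instead take the boundary point $\lambda=\sqrt{2}\,p$, obtaining $\mathbb{P}(S>t)\le\exp(-\sqrt{2}\,pt+2np)\le\exp(-pt/\sqrt{2})$, where the last step uses $2np\le pt/\sqrt{2}$, which follows from $t>2\sqrt{2}\,n$. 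In both cases the argument yields the stronger exponents $pt^2/(4n)$ and $pt/\sqrt2$, so the claimed (weaker) bound $\mathbb{P}(S>t)\le\exp(-\min(pt^2/(16n),\,pt/2))$ follows \emph{a fortiori}. Finally, since each $X_j$ has a symmetric law, $-S$ is equal in distribution to $S$, hence $\mathbb{P}(|S|>t)\le 2\mathbb{P}(S>t)$, which is the desired bound.

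The argument has no real obstacle; the only delicate points are keeping $\lambda$ within the range $[0,\sqrt2\,p]$ where the linearization $e^x-1\le2x$ applies — this is what dictates the case split at $t\asymp n$ — and checking that the constants $16$ and $2$ in the statement are respected after the split, which they are with room to spare.
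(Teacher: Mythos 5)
Your proof is correct: the MGF computation $\mathbb{E}\,e^{\lambda X_jE_j/p}=1+p\bigl(e^{\lambda^2/(2p^2)}-1\bigr)$, the linearization $e^x-1\le 2x$ on $[0,1]$ (which forces the restriction $\lambda\le\sqrt{2}\,p$), the case split at $t=2\sqrt{2}\,n$, and the symmetry step all check out, and the resulting exponents $pt^2/(4n)$ and $pt/\sqrt{2}$ indeed dominate the stated $\min\bigl(pt^2/(16n),pt/2\bigr)$. Note that the paper itself gives no proof of this statement -- it is quoted verbatim as Lemma 13 of the cited reference \cite{gao2016optimal} -- so there is no in-paper argument to compare against; your self-contained Chernoff/MGF derivation is the standard route for such Gaussian--Bernoulli mixture tails and fully establishes the lemma (in fact with better constants than claimed).
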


\begin{lemma}\label{lem:middle}
The following three statements hold:
\begin{enumerate}
\item For any $x,y\in\mathbb{C}^n$ such that $\|y\|=1$ and $\re(y^{\H}x)>0$, we have
$$\left\|\frac{x}{\|x\|}-y\right\|^2\leq \frac{\|(I_n-yy^{\H})x\|^2+|\im(y^{\H}x)|^2}{|\re(y^{\H}x)|^2}.$$
\item For any $x,y\in\mathbb{R}^n$ such that $\|y\|=1$ and $y^{\T}x>0$, we have
$$\left\|\frac{x}{\|x\|}-y\right\|^2\leq \frac{\|(I_n-yy^{\T})x\|^2}{|y^{\T}x|^2}.$$
\item For any $x\in\mathbb{C}$ such that $\re(x)>0$, we have
$$\left|\frac{x}{|x|}-1\right|^2\leq \frac{|\im(x)|^2}{|\re(x)|^2}.$$
\end{enumerate}
\end{lemma}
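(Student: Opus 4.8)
The plan is to prove all three inequalities by the same elementary device: decompose $x$ into its component along $y$ plus an orthogonal remainder, expand the squared norm exactly, and finish with a one-line algebraic estimate. I would prove part~1 in full and then note that parts~2 and~3 are, respectively, the verbatim real-valued analogue and the $n=1$, $y=1$ special case.

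For part~1, write $\beta = y^{\H}x \in \mathbb{C}$ and $w = (I_n - yy^{\H})x$, so that $x = \beta y + w$ with $y^{\H}w = 0$ and hence, using $\|y\|=1$, $\|x\|^2 = |\beta|^2 + \|w\|^2$. The hypothesis $\re(\beta) > 0$ in particular forces $x \neq 0$, so $x/\|x\|$ is well defined, and it makes the right-hand side denominator $|\re(\beta)|^2$ positive. Expanding the squared norm and using $x^{\H}y + y^{\H}x = 2\re(\beta)$,
$$\left\|\frac{x}{\|x\|} - y\right\|^2 = 2 - \frac{2\re(\beta)}{\|x\|} = \frac{2\left(\|x\| - \re(\beta)\right)}{\|x\|}.$$
Rationalizing the numerator and using $\|x\|^2 - \re(\beta)^2 = |\beta|^2 + \|w\|^2 - \re(\beta)^2 = \im(\beta)^2 + \|w\|^2$,
$$\left\|\frac{x}{\|x\|} - y\right\|^2 = \frac{2\left(\im(\beta)^2 + \|w\|^2\right)}{\|x\|\left(\|x\| + \re(\beta)\right)}.$$
Since the claimed bound equals $\bigl(\im(\beta)^2 + \|w\|^2\bigr)/\re(\beta)^2$, it remains only to verify $2\re(\beta)^2 \le \|x\|\bigl(\|x\| + \re(\beta)\bigr)$; this is immediate from $\|x\| \ge |\beta| \ge \re(\beta) > 0$, which gives both $\|x\|^2 \ge \re(\beta)^2$ and $\|x\|\,\re(\beta) \ge \re(\beta)^2$.

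Part~2 follows by running the identical computation over $\mathbb{R}^n$ with $\beta = y^{\T}x > 0$ (so $\im(\beta) = 0$ and $|\beta| = \beta$) and $w = (I_n - yy^{\T})x$: this yields $\left\|\frac{x}{\|x\|} - y\right\|^2 = 2\|w\|^2 / \bigl(\|x\|(\|x\|+\beta)\bigr) \le \|w\|^2/\beta^2$ by the same inequality $2\beta^2 \le \|x\|(\|x\|+\beta)$. Part~3 is the case $n=1$, $y=1$ of part~1: then $I_1 - yy^{\H} = 0$ and $y^{\H}x = x$, so the bound reads $\left|\frac{x}{|x|} - 1\right|^2 \le \frac{|\im(x)|^2}{|\re(x)|^2}$. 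There is essentially no obstacle here; the only points that need attention are the complex inner-product bookkeeping (ensuring that only $\re(y^{\H}x)$ enters the cross term via $x^{\H}y + y^{\H}x = 2\re(y^{\H}x)$) and checking that the three positivity hypotheses guarantee simultaneously that $x/\|x\|$ is defined and that the right-hand side denominators do not vanish.
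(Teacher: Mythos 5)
Your proposal is correct and follows essentially the same route as the paper: both decompose $x$ into its component along $y$ plus the orthogonal remainder $(I_n-yy^{\H})x$, expand $\left\|\frac{x}{\|x\|}-y\right\|^2$ exactly, and reduce the claim to the elementary inequality $2\,\re(y^{\H}x)^2\leq \|x\|\left(\|x\|+\re(y^{\H}x)\right)$, with parts 2 and 3 treated as special cases. The only difference is cosmetic: you reach the key identity by rationalizing $2-2\re(y^{\H}x)/\|x\|$, while the paper writes it directly in terms of $a=\re(y^{\H}x)$ and $b=\sqrt{|\im(y^{\H}x)|^2+\|(I_n-yy^{\H})x\|^2}$.
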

\begin{proof}
It is easy to see that the last two statements are special cases of the first one. Thus, we only need to prove the first statement. Note that
\begin{eqnarray*}
\left\|\frac{x}{\|x\|}-y\right\|^2 &=& \left\|\frac{(I_n-yy^{\H})x+(y^{\H}x)y}{\|x\|}-y\right\|^2 \\
&=& \frac{\|(I_n-yy^{\H})x\|^2+\left|y^{\H}x-\|x\|\right|^2}{\|x\|^2} \\
&=& \frac{b^2+\left(a-\sqrt{a^2+b^2}\right)^2}{a^2+b^2},
\end{eqnarray*}
where $a=\re(y^{\H}x)>0$ and $b=\sqrt{|\im(y^{\H}x)|^2+\|(I_n-yy^{\H})x\|^2}$. Since
$$\frac{b^2+\left(a-\sqrt{a^2+b^2}\right)^2}{a^2+b^2}=\frac{2b^2}{a^2+b^2+a\sqrt{a^2+b^2}}\leq \frac{b^2}{a^2},$$
the proof is complete.
\end{proof}

\begin{lemma}\label{lem:loss-relation}
For any $V=(V_1,\cdots, V_n)\in\mathbb{C}^{n\times n}$ and any $z\in\mathbb{C}_1^n$ such that $\|V_j\|=1$ for all $j\in[n]$, we have
$$n^{-2}\fnorm{{V}^{\H}V-zz^{\H}}^2\leq 2\ell(V,z).$$
For any $V=(V_1,\cdots, V_n)\in\mathbb{R}^{n\times n}$ and any $z\in\{-1,1\}^n$ such that $\|V_j\|=1$ for all $j\in[n]$, we have
$$n^{-2}\fnorm{{V}^{\T}V-zz^{\T}}^2\leq 2\ell(V,z).$$
\end{lemma}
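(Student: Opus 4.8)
The plan is to compute $\fnorm{V^{\H}V-zz^{\H}}^2$ directly, without invoking the map $f$ at all. The tempting first move is to factor $V^{\H}V-zz^{\H}=V^{\H}(V-Q)+(V-Q)^{\H}Q$, where $Q$ is the matrix whose $j$th column is $\bar z_ja$ for the optimal $a$, and then bound each piece by $\opnorm{\,\cdot\,}\fnorm{V-Q}$; but this only yields the constant $4$, since it discards the cancellation forced by the unit-column constraint $\|V_j\|=1$. To reach the sharp constant $2$ I would instead keep everything inside the squared Frobenius norm.

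First, expanding the square and using that $V^{\H}V$ and $zz^{\H}$ are Hermitian gives
\[
\fnorm{V^{\H}V-zz^{\H}}^2=\fnorm{V^{\H}V}^2-2\,\Tr\!\big((V^{\H}V)(zz^{\H})\big)+\fnorm{zz^{\H}}^2 .
\]
The middle term equals $\Tr((V^{\H}V)(zz^{\H}))=z^{\H}V^{\H}Vz=\|Vz\|^2$ with $Vz:=\sum_{j}z_jV_j$, which is real and nonnegative; and $\fnorm{zz^{\H}}^2=\sum_{j,k}|z_j|^2|z_k|^2=n^2$ since $|z_j|=1$. Moreover, Cauchy--Schwarz together with $\|V_j\|=1$ gives $|V_j^{\H}V_k|\le1$, so $\fnorm{V^{\H}V}^2=\sum_{j,k}|V_j^{\H}V_k|^2\le n^2$. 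Combining,
\[
\fnorm{V^{\H}V-zz^{\H}}^2\le 2n^2-2\|Vz\|^2 .
\]

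Second, I would evaluate the loss in closed form. With the convention $\langle x,y\rangle=x^{\H}y$ and using $\|V_j\|=|z_j|=1$, for every unit vector $a$ one has $\sum_{j}\|V_j-\bar z_ja\|^2=2n-2\,\re\langle Vz,a\rangle$; minimizing over $\|a\|=1$ amounts to maximizing $\re\langle Vz,a\rangle$, which equals $\|Vz\|$, whence $\ell(V,z)=2-\tfrac2n\|Vz\|$, that is $\|Vz\|=n\big(1-\tfrac12\ell(V,z)\big)$; in particular $\|Vz\|\le\sum_j\|V_j\|=n$, so $\ell(V,z)\in[0,2]$. Substituting into the last display,
\[
\fnorm{V^{\H}V-zz^{\H}}^2\le 2n^2-2n^2\big(1-\tfrac12\ell(V,z)\big)^2=n^2\,\ell(V,z)\big(2-\tfrac12\ell(V,z)\big)\le 2n^2\,\ell(V,z),
\]
and dividing by $n^2$ proves the first assertion. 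The $\mathbb{Z}_2$ statement is proved identically, with $V^{\H}V$, $zz^{\H}$ replaced by $V^{\T}V$, $zz^{\T}$: here $z_j^2=1$ forces $\fnorm{zz^{\T}}^2=n^2$, and the identity $\ell(V,z)=2-\tfrac2n\|Vz\|$ still holds with $a$ running over real unit vectors.

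I do not foresee a serious obstacle; the only real subtlety is the one already indicated — one must avoid the triangle-inequality proof, which loses a factor of two, and instead use the exact relation $\|Vz\|=n(1-\ell(V,z)/2)$ together with the unit-diagonal structure of $V^{\H}V$ and $zz^{\H}$. The edge case $Vz=0$ is harmless: then $\ell(V,z)=2$, the optimal $a$ is arbitrary, and both sides of the claimed inequality equal $2n^2$.
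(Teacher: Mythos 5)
Your proof is correct and is essentially the paper's own argument: the paper bounds $|V_j^{\H}V_l-z_j\bar z_l|^2\le 2-V_j^{\H}V_l\bar z_jz_l-V_l^{\H}V_jz_j\bar z_l$ entrywise and sums, which is exactly your trace expansion combined with $\fnorm{V^{\H}V}^2\le n^2$, and both proofs then use the closed form $\ell(V,z)=2\bigl(1-\tfrac1n\|\sum_j z_jV_j\|\bigr)$ and the inequality $\ell(2-\ell/2)\le 2\ell$. No substantive difference.
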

\begin{proof}
We only prove the complex version of the inequality. The real version follows the same argument. By definition, we have
\begin{eqnarray*}
\ell(V,z) &=& \left(2-\max_{a\in\mathbb{C}^n:\|a\|^2=1}\left(a^{\H}\left(\frac{1}{n}\sum_{j=1}^n{z}_jV_j\right)+\left(\frac{1}{n}\sum_{j=1}^n{z}_jV_j\right)^{\H}a\right)\right) \\
&=& 2\left(1-\left\|\frac{1}{n}\sum_{j=1}^n{z}_jV_j\right\|\right).
\end{eqnarray*}
Then,
\begin{eqnarray*}
n^{-2}\fnorm{{V}^{\H}V-zz^{\H}}^2 &=& \frac{1}{n^2}\sum_{j=1}^n\sum_{l=1}^n|{V}_j^{\H}{V}_l-z_j\bar{z}_l|^2 \\
& \leq  & \frac{1}{n^2}\sum_{j=1}^n\sum_{l=1}^n\left(2-{V}_j^{\H}{V}_l\bar{z}_jz_l-{V}_l^{\H}{V}_jz_j\bar{z}_l\right) \\
&=& 2\left(1-\left\|\frac{1}{n}\sum_{j=1}^n{z}_jV_j\right\|^2\right).
\end{eqnarray*}
Therefore, $n^{-2}\fnorm{{V}^{\H}V-zz^{\H}}^2\leq \ell(V,z)\left(2-\frac{1}{2}\ell(V,z)\right)\leq 2\ell(V,z)$, and the proof is complete.
\end{proof}

\subsection{Proof of Lemma \ref{lem:critical}}\label{sec:pf-cri-lem}

We organize the proof into four steps. We first list a few high-probability events in Step 1. These events are assumed to be true in later steps. Step 2 provides an error decomposition of $\ell(f(V),z^*)$, and then each error term in the decomposition will be analyzed and bounded in Step 3. Finally, we combine the bounds and derive the desired result in Step 4.
\paragraph{Step 1: Some high-probability events.} By Lemma \ref{lem:ER-graph}, Lemma \ref{lem:bandeira}, and Lemma \ref{lem:talagrand}, we know that 
\begin{eqnarray}
\label{eq:hollow1} \min_{j\in[n]}\sum_{k\in[n]\backslash\{j\}}A_{jk} &\geq& (n-1)p - C\sqrt{np\log n}, \\
\label{eq:hollow2}\max_{j\in[n]}\sum_{k\in[n]\backslash\{j\}}A_{jk} &\leq& (n-1)p + C\sqrt{np\log n}, \\
\label{eq:hollow3}\opnorm{A-\mathbb{E}A} &\leq& C\sqrt{np}, \\
\label{eq:hollow4}\opnorm{A\circ W} &\leq& C\sqrt{np}, \\
\label{eq:hollow6} \sum_{j=1}^n\left|\sum_{k\in[n]\backslash\{j\}}A_{jk}\im(\bar{W}_{jk}\bar{z}_k^*z_j^*)\right|^2 &\leq&  \frac{n^2p}{2}\left(1+C\sqrt{\frac{\log n}{n}}\right), \\
 \label{eq:hollow7} \sum_{j=1}^n\left|\sum_{k\in[n]\backslash\{j\}}A_{jk}\re(\bar{W}_{jk}\bar{z}_k^*z_j^*)\right|^2 &\leq&  \frac{n^2p}{2}\left(1+C\sqrt{\frac{\log n}{n}}\right),
\end{eqnarray}
all hold with probability at least $1-n^{-9}$ for some constant $C>0$. To establish (\ref{eq:hollow6})-(\ref{eq:hollow7}), note that $\sqrt{2}\im(\bar{W}_{jk}\bar{z}_k^*z_j^*), \sqrt{2}\re(\bar{W}_{jk}\bar{z}_k^*z_j^*)$ are all independently standard normally distributed for $1\leq j<k\leq n$. We also have $-\im(\bar{W}_{jk}\bar{z}_k^*z_j^*) =\im(W_{jk}z_k^*\bar{z}_j^*)=\im(\bar{W}_{kj}\bar{z}_j^*z_k^*)$ and $\re(\bar{W}_{jk}\bar{z}_k^*z_j^*) =\re(W_{jk}z_k^*\bar{z}_j^*)=\re(\bar{W}_{kj}\bar{z}_j^*z_k^*)$ for any $1\leq j<k\leq n$.

In addition to (\ref{eq:hollow1})-(\ref{eq:hollow7}), we need another high-probability inequality. For a sufficiently small $\rho$ such that  $\frac{\rho^2 np}{\sigma^2}$ is sufficiently large, we want to upper bound the random variable $\sum_{j=1}^n\mathbb{I}\left\{\frac{2\sigma}{np}\left|\sum_{k\in[n]\backslash\{j\}}A_{jk}\bar{W}_{jk}\bar{z}_k^*\right|>\rho\right\}$. The existence of such $\rho$ is guaranteed by the condition $\frac{np}{\sigma^2}$ is sufficiently large, and the specific choice will be given later. We first bound its expectation by Lemma \ref{lem:gaomalu},
\begin{eqnarray*}
\sum_{j=1}^n\mathbb{P}\left\{\frac{2\sigma}{np}\left|\sum_{k\in[n]\backslash\{j\}}A_{jk}\bar{W}_{jk}\bar{z}_k^*\right|>\rho\right\} &\leq& \sum_{j=1}^n\mathbb{P}\left\{\frac{2\sigma}{np}\left|\sum_{k\in[n]\backslash\{j\}}A_{jk}\re(\bar{W}_{jk}\bar{z}_k^*)\right|>\frac{\rho}{2}\right\} \\
&& + \sum_{j=1}^n\mathbb{P}\left\{\frac{2\sigma}{np}\left|\sum_{k\in[n]\backslash\{j\}}A_{jk}\im(\bar{W}_{jk}\bar{z}_k^*)\right|>\frac{\rho}{2}\right\} \\
&\leq& 4n\exp\left(-\frac{\rho^2 np}{256\sigma^2}\right) + 4n\exp\left(-\frac{\rho np}{8\sigma}\right).
\end{eqnarray*}
By Markov inequality, we have
\begin{equation}
\sum_{j=1}^n\mathbb{I}\left\{\frac{2\sigma}{np}\left|\sum_{k\in[n]\backslash\{j\}}A_{jk}\bar{W}_{jk}\bar{z}_k^*\right|>\rho\right\} \leq \frac{4\sigma^2}{\rho^2p} \exp\left(-\frac{1}{16}\sqrt{\frac{\rho^2 np}{\sigma^2}}\right), \label{eq:hollow5}
\end{equation}
with probability at least
\begin{eqnarray*}
&& 1-\frac{\rho^2pn}{\sigma^2}\left(\exp\left(-\frac{\rho^2 np}{256\sigma^2}+\frac{1}{16}\sqrt{\frac{\rho^2 np}{\sigma^2}}\right) + \exp\left(-\frac{\rho np}{8\sigma}+\frac{1}{16}\sqrt{\frac{\rho^2 np}{\sigma^2}}\right)\right) \\
&\geq& 1-\frac{2\rho^2pn}{\sigma^2}\exp\left(-\frac{1}{16}\sqrt{\frac{\rho^2 np}{\sigma^2}}\right) \\
&\geq& 1-\exp\left(-\frac{1}{32}\sqrt{\frac{\rho^2 np}{\sigma^2}}\right).
\end{eqnarray*}
Finally, we conclude that the events (\ref{eq:hollow1})-(\ref{eq:hollow5}) hold simultaneously with probability at least $1-n^{-9}-\exp\left(-\frac{1}{32}\sqrt{\frac{\rho^2 np}{\sigma^2}}\right)$.

\paragraph{Step 2: Error decomposition.} For any $V\in\mathbb{C}_{1}^{n\times n}$ such that $\ell(V,z^*)\leq \gamma$, we can define $\wt{V}\in\mathbb{C}^{n\times n}$ such that  $$\wt{V}_j=\frac{\sum_{k\in[n]\backslash\{j\}}A_{jk}\bar{Y}_{jk}V_k}{\sum_{k\in[n]\backslash\{j\}}A_{jk}},$$ for each $j\in[n]$. Denote $\wh{V}=f(V)$ then $\wh{V}_j=\wt{V}_j/\|\wt{V}_j\|$ for each coordinate such that $\wt{V}_j\neq 0$.

The condition $\ell(V,z^*)\leq \gamma$ implies there exists some $b\in\mathbb{C}^n$ such that $\|b\|=1$ and $n\ell(V,z^*)=\fnorm{V-bz^{*\H}}^2\leq \gamma n$. By direct calculation, we can write
\begin{eqnarray*}
z_j^*\wt{V}_j &=& b + \frac{\sum_{k\in[n]\backslash\{j\}}A_{jk}z_k^*(V_k-\bar{z}_k^*b)}{\sum_{k\in[n]\backslash\{j\}}A_{jk}} + \frac{\sigma z_j^*b\sum_{k\in[n]\backslash\{j\}}A_{jk}\bar{W}_{jk}\bar{z}_k^*}{\sum_{k\in[n]\backslash\{j\}}A_{jk}} \\
&& + \frac{\sigma z_j^*\sum_{k\in[n]\backslash\{j\}}A_{jk}\bar{W}_{jk}(V_k-\bar{z}_k^*b)}{\sum_{k\in[n]\backslash\{j\}}A_{jk}} \\
&=& b + \frac{1}{n-1}\sum_{k=1}^nz_k^*(V_k-\bar{z}_k^*b) - \frac{1}{n-1}z_j^*(V_j-\bar{z}_j^*b) \\
&& + \left(\frac{\sum_{k\in[n]\backslash\{j\}}A_{jk}z_k^*(V_k-\bar{z}_k^*b)}{\sum_{k\in[n]\backslash\{j\}}A_{jk}}-\frac{1}{n-1}\sum_{k=1}^nz_k^*(V_k-\bar{z}_k^*b)\right) \\
&& + \frac{\sigma z_j^*b\sum_{k\in[n]\backslash\{j\}}A_{jk}\bar{W}_{jk}\bar{z}_k^*}{\sum_{k\in[n]\backslash\{j\}}A_{jk}} + \frac{\sigma z_j^*\sum_{k\in[n]\backslash\{j\}}A_{jk}\bar{W}_{jk}(V_k-\bar{z}_k^*b)}{\sum_{k\in[n]\backslash\{j\}}A_{jk}}.
\end{eqnarray*}
Now we define $a_0=b + \frac{1}{n-1}\sum_{k=1}^nz_k^*(V_k-\bar{z}_k^*b)$ and $a=a_0/\|a_0\|$, and we have
\begin{eqnarray}
\label{eq:tetris99} z_j^*a^{\H}\wt{V}_j &=& \|a_0\| - \frac{1}{n-1}z_j^*a^{\H}(V_j-\bar{z}_j^*b) + a^{\H}F_j + a^{\H}bG_j + a^{\H}H_j, \\
\label{eq:tetris100} \|(I_n-aa^{\H})\wt{V}_j\| &\leq& \frac{1}{n-1}\|V_j-\bar{z}_j^*b\| + \|F_j\| + \|(I_n-aa^{\H})b\||G_j| + \|H_j\|,
\end{eqnarray}
where
\begin{eqnarray*}
F_j &=& \frac{\sum_{k\in[n]\backslash\{j\}}A_{jk}z_k^*(V_k-\bar{z}_k^*b)}{\sum_{k\in[n]\backslash\{j\}}A_{jk}}-\frac{1}{n-1}\sum_{k=1}^nz_k^*(V_k-\bar{z}_k^*b), \\
G_j &=& \frac{\sigma z_j^*\sum_{k\in[n]\backslash\{j\}}A_{jk}\bar{W}_{jk}\bar{z}_k^*}{\sum_{k\in[n]\backslash\{j\}}A_{jk}}, \\
H_j &=& \frac{\sigma z_j^*\sum_{k\in[n]\backslash\{j\}}A_{jk}\bar{W}_{jk}(V_k-\bar{z}_k^*b)}{\sum_{k\in[n]\backslash\{j\}}A_{jk}}.
\end{eqnarray*}
By Lemma \ref{lem:middle}, we have the bound
\begin{equation}
\|\wh{V}_j-\bar{z}_j^*a\|^2 \leq \frac{\|(I_n-aa^{\H})\wt{V}_j\|^2 + |\im(z_j^*a^{\H}\wt{V}_j)|^2}{|\re(z_j^*a^{\H}\wt{V}_j)|^2}, \label{eq:by-anderson}
\end{equation}
whenever $\re(z_j^*a^{\H}\wt{V}_j)>0$ holds. Since
$$\|a_0-b\|=\left\|\frac{1}{n-1}\sum_{k=1}^nz_k^*(V_k-\bar{z}_k^*b)\right\|\leq \frac{1}{n-1}\sqrt{n}\fnorm{V-bz^{*\H}}\leq \frac{n}{n-1}\sqrt{\gamma}\leq 2\sqrt{\gamma},$$
we have $\|a-b\|\leq 2\|a_0-b\|\leq 4\sqrt{\gamma}$. Therefore,
\begin{eqnarray}
\label{eq:ab-1} \|a_0\| &\geq& \|b\|-\|a_0-b\| \geq 1-2\sqrt{\gamma}, \\
\label{eq:ab-2} |a^{\H}b-1| &=& |(a-b)^{\H}b| \leq \|a-b\| \leq 4\sqrt{\gamma}, \\
\label{eq:ab-3} \|(I_n-aa^{\H})b\| &\leq& \|a-b\| + |a^{\H}b-1| \leq 8\sqrt{\gamma}.
\end{eqnarray}
We also have
\begin{equation}
\left|\frac{1}{n-1}z_j^*a^{\H}(V_j-\bar{z}_j^*b)\right| \leq \frac{1}{n-1}\|V_j-\bar{z}_j^*b\| \leq \frac{\sqrt{\gamma n}}{n-1}. \label{eq:small-term}
\end{equation}
Therefore, as long as $\|F_j\|\vee|G_j|\vee\|H_j\|\leq \rho$, we have
\begin{equation}
\re\left(z_j^*a^{\H}\wt{V}_j \right)\geq 1-2\sqrt{\gamma} - \frac{\sqrt{\gamma n}}{n-1} -3\rho\geq 1-3(\sqrt{\gamma}+\rho)>0, \label{eq:switch}
\end{equation}
where we have used (\ref{eq:tetris99}), (\ref{eq:ab-1}), and (\ref{eq:small-term}), and the last inequality is due to the assumption that $\gamma <1/16$ and $\rho$ is sufficiently small. Hence, the event $\{\wt{V}_j=0\}$ is included in the event $\{\|F_j\|\vee|G_j|\vee\|H_j\|> \rho\}$.

By (\ref{eq:by-anderson}), we obtain the bound
\begin{eqnarray*}
\|\wh{V}_j-\bar{z}_j^*a\|^2 &\leq& \frac{\|(I_n-aa^{\H})\wt{V}_j\|^2 + |\im(z_j^*a^{\H}\wt{V}_j)|^2}{|\re(z_j^*a^{\H}\wt{V}_j)|^2}\mathbb{I}\{\|F_j\|\vee|G_j|\vee\|H_j\|\leq \rho\} \\
&& + 4\mathbb{I}\{\|F_j\|\vee|G_j|\vee\|H_j\|> \rho\} \\
&\leq& \frac{\left(\frac{1}{n-1}\|V_j-\bar{z}_j^*b\| + \|F_j\| + 8\sqrt{\gamma}|G_j| + \|H_j\|\right)^2}{(1-3(\sqrt{\gamma}+\rho))^2} \\
&& + \frac{\left(\frac{1}{n-1}\|V_j-\bar{z}_j^*b\| + \|F_j\| + |\im(a^{\H}bG_j)| + \|H_j\|\right)^2}{(1-3(\sqrt{\gamma}+\rho))^2} \\
&& + 4\mathbb{I}\{\|F_j\|>\rho\} + 4\mathbb{I}\{|G_j|>\rho\} + 4\mathbb{I}\{\|H_j\|>\rho\} \\
&\leq& \frac{(1+\eta)|\im(a^{\H}bG_j)|^2 +256\gamma|G_j|^2}{(1-3(\sqrt{\gamma}+\rho))^2} \\
&& + \frac{(7+4\eta^{-1})\left(\frac{1}{(n-1)^2}\|V_j-\bar{z}_j^*b\|^2+\|F_j\|^2+\|H_j\|^2\right)}{(1-3(\sqrt{\gamma}+\rho))^2} \\
&& + 4\mathbb{I}\{\|F_j\|>\rho\} + 4\mathbb{I}\{|G_j|>\rho\} + 4\mathbb{I}\{\|H_j\|>\rho\},
\end{eqnarray*}
for some $\eta$ to be specified later. The last inequality above is due to Jensen's inequality.

\paragraph{Step 3: Analysis of each error term.} Next, we will analyze the error terms $F_j$, $H_j$ and $G_j$ separately. By triangle inequality, (\ref{eq:hollow1}) and (\ref{eq:hollow2}), we have
\begin{eqnarray*}
\|F_j\| &\leq& \left\|\frac{\sum_{k\in[n]\backslash\{j\}}(A_{jk}-p)z_k^*(V_k-\bar{z}_k^*b)}{\sum_{k\in[n]\backslash\{j\}}A_{jk}}\right\| \\
&& + \left\|p\sum_{k\in[n]\backslash\{j\}}z_k^*(V_k-\bar{z}_k^*b)\right\|\left|\frac{1}{\sum_{k\in[n]\backslash\{j\}}A_{jk}}-\frac{1}{(n-1)p}\right| \\
&\leq& \frac{2}{np}\left\|\sum_{k\in[n]\backslash\{j\}}(A_{jk}-p)z_k^*(V_k-\bar{z}_k^*b)\right\| + p\sqrt{n}\fnorm{V-bz^{*\H}}\frac{2\left|\sum_{k\in[n]\backslash\{j\}}(A_{jk}-p)\right|}{n^2p^2} \\
&\leq& \frac{2}{np}\left\|\sum_{k\in[n]\backslash\{j\}}(A_{jk}-p)z_k^*(V_k-\bar{z}_k^*b)\right\| + C_1\frac{\sqrt{p\log n}}{np}\fnorm{V-bz^{*\H}}.
\end{eqnarray*}
Using (\ref{eq:hollow3}), we have
\begin{eqnarray}
\nonumber \sum_{j=1}^n\|F_j\|^2 &\leq& \frac{8}{n^2p^2}\sum_{j=1}^n\left\|\sum_{k\in[n]\backslash\{j\}}(A_{jk}-p)z_k^*(V_k-\bar{z}_k^*b)\right\| + 2C_1^2\frac{\log n}{np}\fnorm{V-bz^{*\H}}^2 \\
\nonumber &\leq& \frac{8}{n^2p^2}\opnorm{A-\mathbb{E}A}^2\fnorm{V-bz^{*\H}}^2  + 2C_1^2\frac{\log n}{np}\fnorm{V-bz^{*\H}}^2 \\
\label{eq:F-bound} &\leq& C_2\frac{\log n}{np}\fnorm{V-bz^{*\H}}^2.
\end{eqnarray}
The above bound also implies
$$\sum_{j=1}^n\mathbb{I}\{\|F_j\|>\rho\}\leq \rho^{-2}\sum_{j=1}^n\|F_j\|^2\leq \frac{C_2}{\rho^2}\frac{\log n}{np}\fnorm{V-bz^{*\H}}^2.$$
Similarly, we can also bound the error terms that depend on $H_j$. By (\ref{eq:hollow1}) and (\ref{eq:hollow4}), we have
\begin{eqnarray}
\nonumber \sum_{j=1}^n\|H_j\|^2 &\leq& \frac{2\sigma^2}{n^2p^2}\sum_{j=1}^n\left\|\sum_{k\in[n]\backslash\{j\}}A_{jk}\bar{W}_{jk}(V_k-\bar{z}_k^*b)\right\|^2 \\
\nonumber  &=& \frac{2\sigma^2}{n^2p^2}\fnorm{(V-bz^{*\H})(A\circ W)^{\H}}^2 \\
\nonumber &\leq& \frac{2\sigma^2}{n^2p^2}\opnorm{A\circ W}^2\fnorm{V-bz^{*\H}}^2 \\
\label{eq:H-bound} &\leq& C_3\frac{\sigma^2}{np}\fnorm{V-bz^{*\H}}^2,
\end{eqnarray}
and thus
$$\sum_{j=1}^n\mathbb{I}\{\|H_j\|>\rho\}\leq \rho^{-2}\sum_{j=1}^n\|H_j\|^2\leq \frac{C_3}{\rho^2}\frac{\sigma^2}{np}\fnorm{V-bz^{*\H}}^2.$$
For the contribution of $G_j$, we use (\ref{eq:hollow1}) and (\ref{eq:hollow5}), and have
\begin{eqnarray}
\nonumber \sum_{j=1}^n\mathbb{I}\{|G_j|>\rho\} &\leq& \sum_{j=1}^n\mathbb{I}\left\{\frac{2\sigma}{np}\left|\sum_{k\in[n]\backslash\{j\}}A_{jk}\bar{W}_{jk}\bar{z}_k^*\right|>\rho\right\} \\
\label{eq:G-exp} &\leq& \frac{4\sigma^2}{\rho^2p} \exp\left(-\frac{1}{16}\sqrt{\frac{\rho^2 np}{\sigma^2}}\right).
\end{eqnarray}
Next, we study the main error term $|\im(a^{\H}bG_j)|^2$. By (\ref{eq:hollow1}), we have
\begin{eqnarray*}
\sum_{j=1}^n|\im(a^{\H}bG_j)|^2 &\leq& \left(1+C_4\sqrt{\frac{\log n}{np}}\right)^2\frac{\sigma^2}{n^2p^2}\sum_{j=1}^n\left|\sum_{k\in[n]\backslash\{j\}}A_{jk}\im(\bar{W}_{jk}z_j^*\bar{z}_k^*a^{\H}b)\right|^2 \\
&\leq& (1+\eta)\left(1+C_4\sqrt{\frac{\log n}{np}}\right)^2\frac{\sigma^2}{n^2p^2}\sum_{j=1}^n\left|\sum_{k\in[n]\backslash\{j\}}A_{jk}\im(\bar{W}_{jk}z_j^*\bar{z}_k^*)\right|^2 \\
&& + (1+\eta^{-1})\left(1+C_4\sqrt{\frac{\log n}{np}}\right)^2\frac{\sigma^2}{n^2p^2}\sum_{j=1}^n\left|\sum_{k\in[n]\backslash\{j\}}A_{jk}\re(\bar{W}_{jk}z_j^*\bar{z}_k^*)\right|^2|\im(a^{\H}b)|^2.
\end{eqnarray*}
By (\ref{eq:ab-2}), we have
$$|\im(a^{\H}b)|=|\im(a^{\H}b-1)|\leq |a^{\H}b-1|\leq 4\sqrt{\gamma}.$$
Together with (\ref{eq:hollow6}) and (\ref{eq:hollow7}), we have
\begin{equation}
\sum_{j=1}^n|\im(a^{\H}bG_j)|^2 \leq \left(1+C_5\left(\eta+\eta^{-1}\gamma+\sqrt{\frac{\log n}{np}}\right)\right)\frac{\sigma^2}{2p}.\label{eq:tenergy}
\end{equation}
We also have
\begin{equation}
\sum_{j=1}^n|G_j|^2 \leq \frac{2\sigma^2}{n^2p^2}\sum_{j=1}^n\left|\sum_{k\in[n]\backslash\{j\}}A_{jk}\bar{W}_{jk}z_j^*\bar{z}_k^*\right|^2 \leq C_6\frac{\sigma^2}{p}, \label{eq:G-bound}
\end{equation}
by (\ref{eq:hollow6}) and (\ref{eq:hollow7}).

\paragraph{Step 4: Combining the bounds.} Plugging all the individual error bounds obtained in Step 3 into the error decomposition in Step 2, we obtain
\begin{eqnarray*}
n\ell(\wh{V},z^*) &\leq& \sum_{j=1}^n\|\wh{V}_j-\bar{z}_j^*a\|^2 \\
&\leq& \left(1+C_7\left(\rho+\eta+\sqrt{\gamma}+\eta^{-1}\gamma+\sqrt{\frac{\log n}{np}}\right)\right)\frac{\sigma^2}{2p} \\
&& + \frac{16\sigma^2}{\rho^2 p}\exp\left(-\frac{1}{16}\sqrt{\frac{\rho^2np}{\sigma^2}}\right)  + C_7\left(\eta^{-1}+\rho^{-2}\right)\frac{\log n+\sigma^2}{np}n\ell(V,z^*).
\end{eqnarray*}
We set
$$\eta=\sqrt{\gamma+\frac{\log n+\sigma^2}{np}}\quad\text{and}\quad \rho^2=\sqrt{32}\sqrt{\frac{\log n+\sigma^2}{np}}.$$
Then, since $\frac{\rho^2np}{\sigma^2}$ is sufficiently large, we have
$$\frac{16\sigma^2}{\rho^2 p}\exp\left(-\frac{1}{16}\sqrt{\frac{\rho^2np}{\sigma^2}}\right)\leq \frac{\sigma^2}{\rho^2 p}\left(\frac{\sigma^2}{\rho^2 np}\right)^2\leq\frac{\sigma^2}{p}\sqrt{\frac{\sigma^2}{np}}.$$
Therefore, we have
$$\ell(\wh{V},z^*)\leq \left(1+C_8\left(\gamma^2+\frac{\log n+\sigma^2}{np}\right)^{1/4}\right)\frac{\sigma^2}{2np} + C_8\sqrt{\frac{\log n+\sigma^2}{np}}\ell(V,z^*).$$
Since the above inequality is derived from the conditions (\ref{eq:hollow1})-(\ref{eq:hollow5}) and $\ell(V,z^*)\leq \gamma$, it holds uniformly over all $V\in\mathbb{C}_{1}^{n\times n}$ such that $\ell(V,z^*)\leq \gamma$ with probability at least $1-n^{-9}-\exp\left(-\left(\frac{np}{\sigma^2}\right)^{1/4}\right)$. The proof is complete.

\subsection{Proof of Lemma \ref{lem:critical-z2}}\label{sec:pf-cri-lem-z2}

Similar to the proof of Lemma \ref{lem:critical}, we organize the proof of Lemma \ref{lem:critical-z2} into four steps.

\paragraph{Step 1: Some high-probability events.} We already know that (\ref{eq:hollow1}), (\ref{eq:hollow2}) and (\ref{eq:hollow3}) hold with probability at least $1-n^{-9}$. We also have
\begin{equation}
\opnorm{A\circ W} \leq C\sqrt{np}, \label{eq:hollow4-z2}
\end{equation}
with probability at least $1-n^{-9}$ by Lemma \ref{lem:bandeira}. Note that the matrix $W$ in (\ref{eq:hollow4-z2}) is real-valued, compared with the complex version of the bound (\ref{eq:hollow4}). Another high-probability event we need is for the random variable $\sum_{j=1}^n\left|\sum_{k\in[n]\backslash\{j\}}A_{jk}W_{jk}z_j^*z_k^*\right|^2$. By Lemma \ref{lem:talagrand} and with a similar analysis that leads to (\ref{eq:hollow7}), 
we can conclude that with probability at least $1-n^{-9}$,
\begin{equation}
\sum_{j=1}^n\left|\sum_{k\in[n]\backslash\{j\}}A_{jk}W_{jk}z_j^*z_k^*\right|^2 \leq  n^2p\left(1+C\sqrt{\frac{\log n}{n}}\right). \label{eq:hollow5-z2}
\end{equation}
In the end, we conclude that the events (\ref{eq:hollow1}), (\ref{eq:hollow2}), (\ref{eq:hollow3}), (\ref{eq:hollow4-z2}) and (\ref{eq:hollow5-z2}) hold simultaneously with probability at least $1-2n^{-9}$.

\paragraph{Step 2: Error decomposition.} For any $V\in\mathbb{R}_{1}^{n\times n}$ such that $\ell(V,z^*)\leq \gamma$, we can write $\wh{V}=f(V)$ with each column $\wh{V}_j=\wt{V}_j/\|\wt{V}_j\|$, where
$$\wt{V}_j=\frac{\sum_{k\in[n]\backslash\{j\}}A_{jk}{Y}_{jk}V_k}{\sum_{k\in[n]\backslash\{j\}}A_{jk}}.$$
The condition $\ell(V,z^*)\leq \gamma$ implies there exists some $b\in\mathbb{R}^n$ such that $\|b\|=1$ and $n\ell(V,z^*)=\fnorm{V-bz^{*\T}}^2\leq \gamma n$. By direct calculation, we can write
\begin{eqnarray*}
z_j^*\wt{V}_j &=& b + \frac{1}{n-1}\sum_{k=1}^nz_k^*(V_k-{z}_k^*b) - \frac{1}{n-1}z_j^*(V_j-{z}_j^*b) \\
&& + \left(\frac{\sum_{k\in[n]\backslash\{j\}}A_{jk}z_k^*(V_k-{z}_k^*b)}{\sum_{k\in[n]\backslash\{j\}}A_{jk}}-\frac{1}{n-1}\sum_{k=1}^nz_k^*(V_k-{z}_k^*b)\right) \\
&& + \frac{\sigma z_j^*b\sum_{k\in[n]\backslash\{j\}}A_{jk}{W}_{jk}{z}_k^*}{\sum_{k\in[n]\backslash\{j\}}A_{jk}} + \frac{\sigma z_j^*\sum_{k\in[n]\backslash\{j\}}A_{jk}{W}_{jk}(V_k-{z}_k^*b)}{\sum_{k\in[n]\backslash\{j\}}A_{jk}}.
\end{eqnarray*}
Now we define $a_0=b + \frac{1}{n-1}\sum_{k=1}^nz_k^*(V_k-{z}_k^*b)$ and $a=a_0/\|a_0\|$, and we have
\begin{eqnarray*}
z_j^*a^{\T}\wt{V}_j &=& \|a_0\| - \frac{1}{n-1}z_j^*a^{\T}(V_j-{z}_j^*b) + a^{\T}F_j + a^{\T}bG_j + a^{\T}H_j, \\
\|(I_n-aa^{\T})\wt{V}_j\| &\leq& \frac{1}{n-1}\|V_j-{z}_j^*b\| + \|F_j\| + \|(I_n-aa^{\T})b\||G_j| + \|H_j\|,
\end{eqnarray*}
where
\begin{eqnarray*}
F_j &=& \frac{\sum_{k\in[n]\backslash\{j\}}A_{jk}z_k^*(V_k-{z}_k^*b)}{\sum_{k\in[n]\backslash\{j\}}A_{jk}}-\frac{1}{n-1}\sum_{k=1}^nz_k^*(V_k-{z}_k^*b), \\
G_j &=& \frac{\sigma z_j^*\sum_{k\in[n]\backslash\{j\}}A_{jk}{W}_{jk}{z}_k^*}{\sum_{k\in[n]\backslash\{j\}}A_{jk}}, \\
H_j &=& \frac{\sigma z_j^*\sum_{k\in[n]\backslash\{j\}}A_{jk}{W}_{jk}(V_k-{z}_k^*b)}{\sum_{k\in[n]\backslash\{j\}}A_{jk}}.
\end{eqnarray*}
By Lemma \ref{lem:middle}, we have the bound
\begin{equation}
\|\wh{V}_j-{z}_j^*a\|^2 \leq \frac{\|(I_n-aa^{\T})\wt{V}_j\|^2}{|z_j^*a^{\T}\wt{V}_j|^2}, \label{eq:by-anderson-z2}
\end{equation}
whenever $z_j^*a^{\T}\wt{V}_j>0$ holds. Since
$$\|a_0-b\|=\left\|\frac{1}{n-1}\sum_{k=1}^nz_k^*(V_k-{z}_k^*b)\right\|\leq \frac{1}{n-1}\sqrt{n}\fnorm{V-bz^{*\T}}\leq \frac{2}{\sqrt{n}}\fnorm{V-bz^{*\T}},$$
we have $\|a-b\|\leq 2\|a_0-b\|\leq \frac{4}{\sqrt{n}}\fnorm{V-bz^{*\T}}$. Therefore,
\begin{eqnarray}
\label{eq:ab-1-z2} \|a_0\| &\geq& \|b\|-\|a_0-b\| \geq 1-\frac{2}{\sqrt{n}}\fnorm{V-bz^{*\T}}, \\
\label{eq:ab-2-z2} |a^{\T}b-1| &=& |(a-b)^{\T}b| \leq \|a-b\| \leq \frac{4}{\sqrt{n}}\fnorm{V-bz^{*\T}}, \\
\label{eq:ab-3-z2} \|(I_n-aa^{\T})b\| &\leq& \|a-b\| + |a^{\T}b-1| \leq \frac{8}{\sqrt{n}}\fnorm{V-bz^{*\T}}.
\end{eqnarray}
We also have
\begin{equation}
\left|\frac{1}{n-1}z_j^*a^{\T}(V_j-{z}_j^*b)\right| \leq \frac{1}{n-1}\|V_j-{z}_j^*b\| \leq \frac{1}{n-1}\fnorm{V-bz^{*\T}}. \label{eq:small-term-z2}
\end{equation}
Therefore, as long as $\|F_j\|\vee\|H_j\|\leq \rho$ and $|G_j|\leq 1-4\rho$, we have
\begin{eqnarray*}
z_j^*a^{\T}\wt{V}_j &\geq& 1-\frac{2}{\sqrt{n}}\fnorm{V-bz^{*\T}}-\frac{1}{n-1}\fnorm{V-bz^{*\T}}-\|F_j\|-|G_j|-\|H_j\| \\
&\geq& 1-3\sqrt{\gamma}-2\rho-(1-4\rho) \\
&\geq& \rho,
\end{eqnarray*}
where we have used (\ref{eq:ab-1-z2}) and (\ref{eq:small-term-z2}), and we set $\rho$ to satisfy $\rho\geq 3\sqrt{\gamma}$. The specific choice of $\rho$ will be given later. Hence, the event $\{\wt{V}_j=0\}$ is included in the event $\{\|F_j\|\vee\|H_j\|> \rho \text{ or } |G_j|> 1-4\rho\}$.

By (\ref{eq:by-anderson-z2}), we obtain the bound
\begin{eqnarray*}
\|\wh{V}_j-{z}_j^*a\|^2 &\leq& \frac{\|(I_n-aa^{\T})\wt{V}_j\|^2}{|z_j^*a^{\T}\wt{V}_j|^2}\mathbb{I}\{\|F_j\|\vee\|H_j\|\leq \rho, |G_j|\leq 1-4\rho\} \\
&& + 4\mathbb{I}\{\|F_j\|\vee\|H_j\|> \rho \text{ or } |G_j|> 1-4\rho\} \\
&\leq& \frac{1}{\rho^2}\left(\frac{1}{n-1}\|V_j-{z}_j^*b\| + \|F_j\| + \|(I_n-aa^{\T})b\||G_j| + \|H_j\|\right)^2 \\
&& + 4\mathbb{I}\{\|F_j\|>\rho\} + 4\mathbb{I}\{|G_j|>1-4\rho\} + 4\mathbb{I}\{\|H_j\|>\rho\} \\
&\leq& \frac{4\|V_j-{z}_j^*b\|^2}{\rho^2(n-1)^2} + \frac{4\|F_j\|^2}{\rho^2} + \frac{4\|(I_n-aa^{\T})b\|^2|G_j|^2}{\rho^2} + \frac{4\|H_j\|^2}{\rho^2} \\
&& + \frac{4\|F_j\|^2}{\rho^2} + \frac{4\|H_j\|^2}{\rho^2} + 4\mathbb{I}\{|G_j|>1-4\rho\} \\
&\leq&  \frac{4\|V_j-{z}_j^*b\|^2}{\rho^2(n-1)^2} + \frac{8\|F_j\|^2}{\rho^2} + \frac{256\fnorm{V-bz^{*\T}}^2|G_j|^2}{n\rho^2} + \frac{8\|H_j\|^2}{\rho^2} \\
&& + 4\mathbb{I}\{|G_j|>1-4\rho\}.
\end{eqnarray*}
We have used (\ref{eq:ab-3-z2}), Jensen's inequality and Markov's inequality in the above derivation.

\paragraph{Step 3: Analysis of each error term.} Next, we will analyze the error terms $F_j$, $H_j$ and $G_j$ separately. Following the same analysis that leads to (\ref{eq:F-bound}), (\ref{eq:H-bound}) and (\ref{eq:G-bound}), we have
\begin{eqnarray*}
\sum_{j=1}^n\|F_j\|^2 &\leq& C_1\frac{\log n}{np}\fnorm{V-bz^{*\H}}^2, \\
\sum_{j=1}^n\|H_j\|^2 &\leq& C_2\frac{\sigma^2}{np}\fnorm{V-bz^{*\H}}^2, \\
\sum_{j=1}^n|G_j|^2 &\leq& C_3\frac{\sigma^2}{p}.
\end{eqnarray*}
Note that the above three bounds are based on the events (\ref{eq:hollow1}), (\ref{eq:hollow2}), (\ref{eq:hollow3}), (\ref{eq:hollow4-z2}) and (\ref{eq:hollow5-z2}).

\paragraph{Step 4: Combining the bounds.} Plugging all the individual error bounds obtained in Step 3 into the error decomposition in Step 2, we obtain
\begin{eqnarray*}
n\ell(\wh{V},z^*) &\leq& \sum_{j=1}^n\|\wh{V}_j-{z}_j^*a\|^2 \\
&\leq& \left(\frac{4}{\rho^2(n-1)^2}+\frac{8C_1\log n+(8C_2+256C_3)\sigma^2}{\rho^2np}\right)n\ell(V,z^*) + 4\sum_{j=1}^n\mathbb{I}\{|G_j|>1-4\rho\}.
\end{eqnarray*}
Set
$$\rho^2=\left(C_4\frac{\log n+\sigma^2}{np}\right) \vee  \br{3\gamma},$$
for some sufficiently large constant $C_4$ such that $\frac{4}{\rho^2(n-1)^2}+\frac{8C_1\log n+(8C_2+256C_3)\sigma^2}{\rho^2np}\leq \frac{1}{2}$. Then, we have
\begin{eqnarray*}
\ell(\wh{V},z^*) &\leq& \frac{1}{2}\ell(V,z^*)+\frac{4}{n}\sum_{j=1}^n\mathbb{I}\{|G_j|>1-4\rho\} \\
&\leq& \frac{1}{2}\ell(V,z^*)+\frac{4}{n}\sum_{j=1}^n\mathbb{I}\left\{\frac{\sigma}{(n-1)p}\left|\sum_{k\in[n]\backslash\{j\}}z_k^*A_{jk}W_{jk}\right|>\left(1-C_5\sqrt{\frac{\log n+\sigma^2}{np}} -\sqrt{3\gamma} \right)\right\},
\end{eqnarray*}
where the last inequality is by (\ref{eq:hollow1}) and (\ref{eq:hollow2}). Note that $\frac{\log n +\sigma^2}{np}$ is sufficiently small and $\gamma <1/16$, we have $\delta <1$ with $\delta = C_5\sqrt{\frac{\log n+\sigma^2}{np}} +\sqrt{3\gamma}$.
Since the above about is derived from the conditions (\ref{eq:hollow1}), (\ref{eq:hollow2}), (\ref{eq:hollow3}), (\ref{eq:hollow4-z2}) and (\ref{eq:hollow5-z2}) and $\ell(V,z^*)\leq \gamma$, it holds uniformly over all $V\in\mathbb{R}_{1}^{n\times n}$ such that $\ell(V,z^*)\leq \gamma$ with probability at least $1-2n^{-9}$. The proof is complete.

\subsection{Proofs of Theorem \ref{thm:main1}, Theorem \ref{thm:round-phase}, and Theorem \ref{thm:main-z2}}\label{sec:pf-main}

\begin{proof}[Proof of Theorem \ref{thm:main1}]
We obtain (\ref{eq:apply-l-2-SDP}) as a consequence of Lemma \ref{lem:critical} and Lemma \ref{lem:SDP-crude}, which immediately implies the first conclusion. The second conclusion is a consequence of Lemma \ref{lem:loss-relation}.
\end{proof}

\begin{proof}[Proof of Theorem \ref{thm:round-phase}]
By Theorem \ref{thm:main1}, we have $\fnorm{\wh{V}-bz^{*\H}}^2\leq \frac{\sigma^2}{p}$ with high probability for some $b\in\mathbb{C}^n$ such that $\|b\|=1$. Since $\wh{V}=f(\wh{V})$, we can follow the same analysis in the proof of Lemma \ref{lem:critical} and obtain the bound
\begin{equation}
\fnorm{\wh{V}-az^{*\H}}^2 \leq (1+\delta)\frac{\sigma^2}{2p},\label{eq:bound-forgot}
\end{equation}
with high probability, where $\delta=C\left(\frac{\log n+\sigma^2}{np}\right)^{1/4}$ and $a=a_0/\|a_0\|$ with $a_0=b + \frac{1}{n-1}\sum_{k=1}^nz_k^*(\wh{V}_k-\bar{z}_k^*b)$. Let $\wt{z}=\wh{V}^{\H}\wh{a}$ where $\wh{a}$ is the leading left singular vector of $\wh{V}$. By the definition of $\wh{z}$, we can write $\wh{z}_j=\wt{z}_j/|\wt{z}_j|$ for all $j\in[n]$ with non-zero $\wt{z}_j$.

 By (\ref{eq:bound-forgot}) and Wedin's sin-theta theorem \citep{wedin1972perturbation}, we have
\begin{equation}
\|\wh{a}-ha\|^2 \leq \frac{\fnorm{\wh{V}-az^{*\H}}^2 }{n}\leq   \frac{\sigma^2}{np},\label{eq:apply-wedin}
\end{equation}
form some $h\in\mathbb{C}_1$. Define $d_0=a^{\H}\wh{a}$ and $d=d_0/|d_0|$. With $\wt{z}_j=\wh{V}_j^{\H}\wh{a}$, we have
\begin{equation}
\wt{z}_j\bar{z}_j^*\bar{d}=|d_0|+h\bar{d}\bar{z}_j^*(\wh{V}_j-\bar{z}_j^*a)^{\H}a+\bar{d}(\wh{V}_j-\bar{z}_j^*a)^{\H}(\wh{a}-ha)\bar{z}_j^*. \label{eq:round-basic}
\end{equation}
By Lemma \ref{lem:middle}, we have the bound
\begin{equation}
|\wh{z}_j-dz_j^*|^2 = \left|\frac{\wt{z}_j\bar{z}_j^*\bar{d}}{|\wt{z}_j\bar{z}_j^*\bar{d}|}-1\right|^2 \leq \frac{|\im(\wt{z}_j\bar{z}_j^*\bar{d})|^2}{|\re(\wt{z}_j\bar{z}_j^*\bar{d})|^2}, \label{eq:first-ratio-bound}
\end{equation}
as long as $\re(\wt{z}_j\bar{z}_j^*\bar{d})>0$. By (\ref{eq:apply-wedin}), we have $|d_0|\geq\re(h\wh{a}^{\H}a)\geq 1-\frac{\sigma^2}{2np}$ and $|\bar{d}(\wh{V}_j-\bar{z}_j^*a)^{\H}(\wh{a}-ha)\bar{z}_j^*|\leq \sqrt{\frac{\sigma^2}{np}}\|\wh{V}_j-\bar{z}_j^*a\|$. Moreover,
$$|h\bar{d}\bar{z}_j^*(\wh{V}_j-\bar{z}_j^*a)^{\H}a|\leq |(\wh{V}_j-\bar{z}_j^*a)^{\H}a|=|\wh{V}_j^{\H}a\bar{z}_j^*-1|=|a^{\H}\wh{V}_jz_j^*-1|.$$
Therefore,
\begin{equation}
\re(\wt{z}_j\bar{z}_j^*\bar{d})\geq 1-\frac{\sigma^2}{2np}-|a^{\H}\wh{V}_jz_j^*-1|-\sqrt{\frac{\sigma^2}{np}}\|\wh{V}_j-\bar{z}_j^*a\|. \label{eq:den-round}
\end{equation}

In the following, we are going to establish a lower bound for (\ref{eq:den-round}) using some similar analysis as in the proof of Lemma \ref{lem:critical}. 
Since $\wh{V}=f(\wh{V})$, we can write $\wh{V}_j=\wt{V}_j/\|\wt{V}_j\|$ for all non-zero $\wt{V}_j$, where
$$\wt{V}_j=\frac{\sum_{k\in[n]\backslash\{j\}}A_{jk}{Y}_{jk}\wh{V}_k}{\sum_{k\in[n]\backslash\{j\}}A_{jk}}.$$
Similar to the decomposition (\ref{eq:tetris99}), we can write
$$a^{\H}\wt{V}_jz_j^* = \|a_0\| - \frac{1}{n-1}z_j^*a^{\H}(\wh{V}_j-\bar{z}_j^*b) + a^{\H}F_j + a^{\H}bG_j + a^{\H}H_j,$$
where
\begin{eqnarray*}
F_j &=& \frac{\sum_{k\in[n]\backslash\{j\}}A_{jk}z_k^*(\wh{V}_k-\bar{z}_k^*b)}{\sum_{k\in[n]\backslash\{j\}}A_{jk}}-\frac{1}{n-1}\sum_{k=1}^nz_k^*(\wh{V}_k-\bar{z}_k^*b), \\
G_j &=& \frac{\sigma z_j^*\sum_{k\in[n]\backslash\{j\}}A_{jk}\bar{W}_{jk}\bar{z}_k^*}{\sum_{k\in[n]\backslash\{j\}}A_{jk}}, \\
H_j &=& \frac{\sigma z_j^*\sum_{k\in[n]\backslash\{j\}}A_{jk}\bar{W}_{jk}(\wh{V}_k-\bar{z}_k^*b)}{\sum_{k\in[n]\backslash\{j\}}A_{jk}}.
\end{eqnarray*}
By the same argument that leads to (\ref{eq:switch}) with $\gamma=\frac{\sigma^2}{np}$, for any $\rho>0$, we know that as long as $\|F_j\|\vee|G_j|\vee\|H_j\|\leq \rho$, we have
\begin{equation}
|\re(a^{\H}\wt{V}_jz_j^*)-1|\leq 3\rho+3\sqrt{\frac{\sigma^2}{np}}. \label{eq:lsa}
\end{equation}
Moreover,
\begin{eqnarray}
\nonumber |\im(a^{\H}\wt{V}_jz_j^*)| &\leq& \frac{1}{n-1}\|\wh{V}_j-\bar{z}_j^*b\|+\|F_j\|+|\im(a^{\H}bG_j)|^2+\|H_j\| \\
\nonumber&\leq&\frac{1}{n-1}\sqrt{\frac{\sigma^2}{p}}+\|F_j\|+|\im(a^{\H}bG_j)|^2+\|H_j\| \\
\label{eq:homepod}  &\leq& \frac{1}{n-1}\sqrt{\frac{\sigma^2}{p}}+3\rho.
\end{eqnarray}
By a similar bound to (\ref{eq:tetris100}), we also have
$$\|(I_n-aa^{\H})\wt{V}_j\|\leq \frac{1}{n-1}\|\wh{V}_j-\bar{z}_j^*b\|+\|F_j\|+|G_j|+\|H_j\|\leq \frac{1}{n-1}\sqrt{\frac{\sigma^2}{p}}+3\rho.$$
With the decomposition $\|\wt{V}_j\|^2=\|(I_n-aa^{\H})\wt{V}_j\|^2+|\im(a^{\H}\wt{V}_jz_j^*)|^2+|\re(a^{\H}\wt{V}_jz_j^*)|^2$, we have
\begin{equation}
\left|\|\wt{V}_j\|^2-1\right|\leq \|(I_n-aa^{\H})\wt{V}_j\|^2+|\im(a^{\H}\wt{V}_jz_j^*)|^2+\left||\re(a^{\H}\wt{V}_jz_j^*)|^2-1\right|\leq 4\rho+4\sqrt{\frac{\sigma^2}{np}}.\label{eq:airpods}
\end{equation}
Let $\rho$ be a sufficiently small with explicit expression to be given later. Together with the assumption that  $\frac{\sigma^2}{np}$ is also sufficiently small, both (\ref{eq:lsa}) and (\ref{eq:airpods}) can be upper bounded by $1/2$, which implies $\re(a^{\H}\wt{V}_jz_j^*)> 1/2$ and $\wt{V_j}\neq 0$ respectively.
Then $\wh{V}_j = \wt{V}_j/\|\wt{V}_j\|$ leads to the bound
\begin{align*}
|a^{\H}\wh{V}_jz_j^*-1| &\leq \left|\frac{\re(a^{\H}\wt{V}_jz_j^*)-\|\wt{V}_j\|}{\|\wt{V}_j\|}\right|+\frac{|\im(a^{\H}\wt{V}_jz_j^*)|}{\|\wt{V}_j\|} \\
&\leq   \left|\frac{\re(a^{\H}\wt{V}_jz_j^*)-1}{\|\wt{V}_j\|}\right|   +    \left|\frac{1-\|\wt{V}_j\|}{\|\wt{V}_j\|}\right|   +\frac{|\im(a^{\H}\wt{V}_jz_j^*)|}{\|\wt{V}_j\|}  \\
&\leq C_1\left(\rho+\sqrt{\frac{\sigma^2}{np}}\right),
\end{align*}
where we use (\ref{eq:lsa})-(\ref{eq:airpods}).
By Lemma \ref{lem:middle}, we have the bound
\begin{equation}
\|\wh{V}_j-\bar{z}_j^*a\|^2 \leq \frac{\|(I_n-aa^{\H})\wt{V}_j\|^2 + |\im(z_j^*a^{\H}\wt{V}_j)|^2}{|\re(z_j^*a^{\H}\wt{V}_j)|^2}\leq C_2\left(\rho^2 + \frac{\sigma^2}{n^2p}\right).\label{eq:michigan}
\end{equation}
Plugging the above two bounds into (\ref{eq:den-round}), we have
$$\re(\wt{z}_j\bar{z}_j^*\bar{d})\geq 1-C_3\left(\rho+\sqrt{\frac{\sigma^2}{np}}\right),$$
which is positive since $\rho$ and $\frac{\sigma^2}{np}$ are sufficiently small. Therefore, we have $\re(\wt{z}_j\bar{z}_j^*\bar{d})>0$ and the bound (\ref{eq:first-ratio-bound}) holds when $\|F_j\|\vee|G_j|\vee\|H_j\|\leq \rho$. Also this implies the event $\{\wt{z}_j=0\}$ is included in the event $\{\|F_j\|\vee|G_j|\vee\|H_j\| > \rho\}$. As a consequence, we have
$$|\wh{z}_j-dz_j^*|^2\leq \frac{|\im(\wt{z}_j\bar{z}_j^*\bar{d})|^2}{\left(1-C_3\left(\rho+\sqrt{\frac{\sigma^2}{np}}\right)\right)^2}\mathbb{I}\{\|F_j\|\vee|G_j|\vee\|H_j\|\leq\rho\} + 4\mathbb{I}\{\|F_j\|\vee|G_j|\vee\|H_j\|>\rho\}.$$

Now we need to bound $|\im(\wt{z}_j\bar{z}_j^*\bar{d})|$ according to the expansion (\ref{eq:round-basic}). We have
\begin{equation}
|\im(\wt{z}_j\bar{z}_j^*\bar{d})|\leq \left|\im(\bar{z}_j^*(\wh{V}_j-\bar{z}_j^*a)^{\H}a)\right|+|\im(h\bar{d})|\left|\re(\bar{z}_j^*(\wh{V}_j-\bar{z}_j^*a)^{\H}a)\right|+\|\wh{V}_j-\bar{z}_j^*a\|\|\wh{a}-ha\|. \label{eq:im-expansion-round}
\end{equation}
By (\ref{eq:apply-wedin}) and (\ref{eq:michigan}), the third term in the bound (\ref{eq:im-expansion-round}) can be further bounded by $C_2\sqrt{\frac{\sigma^2}{np}}\left(\rho+\sqrt{\frac{\sigma^2}{n^2p}}\right)$. To bound the second term on the right hand side of (\ref{eq:im-expansion-round}), we have $|\im(h\bar{d})|\leq |\im(ha^{\H}\wh{a})|\leq \sqrt{1-|\re(ha^{\H}a)|^2}\leq \sqrt{\frac{\sigma^2}{np}}$ by (\ref{eq:apply-wedin}). Together with (\ref{eq:lsa}), we obtain the bound $3\sqrt{\frac{\sigma^2}{np}}\left(\rho+\sqrt{\frac{\sigma^2}{np}}\right)$. By (\ref{eq:airpods}), we can bound the first term in the bound (\ref{eq:im-expansion-round}) by
$$\left|\im(\bar{z}_j^*(\wh{V}_j-\bar{z}_j^*a)^{\H}a)\right|=\left|\im(\wh{V}_j^{\H}a\bar{z}_j^*)\right| = \left| \im(a^{\H}\wh{V}_jz_j^*) \right|\leq \frac{\left|\im(a^{\H}\wt{V}_jz_j^*)\right|}{1-4\left(\rho+\sqrt{\frac{\sigma^2}{np}}\right)}.$$
Then, we have
\begin{eqnarray*}
|\wh{z}_j-dz_j^*|^2 &\leq& \left(1+C_4\left(\rho+\sqrt{\frac{\sigma^2}{np}}\right)\right)\left|\im(a^{\H}\wt{V}_jz_j^*)\right|^2 + C_5\frac{\sigma^2}{np}\left(\rho^2+\frac{\sigma^2}{np}\right) \\
&& + 4\mathbb{I}\{\|F_j\|\vee|G_j|\vee\|H_j\|>\rho\} \\
&\leq& \left(1+C_6\left(\rho+\sqrt{\frac{\sigma^2}{np}}+\eta\right)\right)|\im(a^{\H}bG_j)|^2 + C_7\eta^{-1}\left(\frac{\sigma^2}{n^2p}+\|F_j\|^2+\|H_j\|^2\right) \\
&& + C_5\frac{\sigma^2}{np}\left(\rho^2+\frac{\sigma^2}{np}\right) + 4\rho^{-2}\left(\|F_j\|^2+\|H_j\|^2\right) + 4\mathbb{I}\{|G_j|>\rho\},
\end{eqnarray*}
for some $\eta$ to be specified later, 
where the last inequality is by (\ref{eq:homepod}). Summing over $j\in[n]$, we obtain
\begin{eqnarray*}
n\ell(\wh{z},z^*) &\leq& \sum_{j=1}^n|\wh{z}_j-dz_j^*|^2 \\
&\leq& \left(1+C_6\left(\rho+\sqrt{\frac{\sigma^2}{np}}+\eta\right)\right)\sum_{j=1}^n|\im(a^{\H}bG_j)|^2 + C_7\eta^{-1}\frac{\sigma^2}{np} + C_5\frac{\sigma^2}{p}\left(\rho^2+\frac{\sigma^2}{np}\right) \\
&& + (C_7\eta^{-1}+4\rho^{-2})\sum_{j=1}^n\left(\|F_j\|^2+\|H_j\|^2\right) + 4\sum_{j=1}^n\mathbb{I}\{|G_j|>\rho\}.
\end{eqnarray*}
By the same argument that leads to the bound (\ref{eq:F-bound})-(\ref{eq:tenergy}) (with $\gamma=\frac{\sigma^2}{np}$ in (\ref{eq:tenergy})), we have
\begin{eqnarray*}
\sum_{j=1}^n(\|F_j\|^2+\|H_j\|^2) &\leq& C'\frac{\log n}{np}\fnorm{\wh{V}-bz^{*\H}}^2 \leq C'\frac{\log n}{np}\frac{\sigma^2}{p}, \\
\sum_{j=1}^n\mathbb{I}\{|G_j|>\rho\} &\leq& \frac{4\sigma^2}{\rho^2p} \exp\left(-\frac{1}{16}\sqrt{\frac{\rho^2 np}{\sigma^2}}\right), \\
\sum_{j=1}^n|\im(a^{\H}bG_j)|^2 &\leq& \left(1+C''\left(\eta+\eta^{-1}\frac{\sigma^2}{np}+\sqrt{\frac{\log n}{np}}\right)\right)\frac{\sigma^2}{2p}.
\end{eqnarray*}
Take $\eta=\rho^2=\sqrt{\frac{\log n+\sigma^2}{np}}$, and we have some constant $C'''>0$ such that
$$\ell(\wh{z},z^*)\leq \left(1+C'''\left(\frac{\log n+\sigma^2}{np}\right)^{1/4}\right)\frac{\sigma^2}{2np}.$$
Note that the above bound is derived from conditions (\ref{eq:hollow1})-(\ref{eq:hollow7}), and thus the result holds with high probability.
\end{proof}

\begin{proof}[Proof of Theorem \ref{thm:main-z2}]
The first conclusion is an immediate consequence of Lemma \ref{lem:critical-z2}, Lemma \ref{lem:stat-error-z2} and Lemma \ref{lem:SDP-crude-z2}. By Lemma \ref{lem:loss-relation}, we also obtain the second conclusion. For the last conclusion, we have $|\wh{z}_j-z_j^*|\leq 2|\sqrt{n}u_j-z_j^*|$ and $|\wh{z}_j+z_j^*|\leq 2|\sqrt{n}u_j+z_j^*|$ by the definition of $\wh{z}_j$. Then,
$$\ell(\wh{z},z^*)\leq 4\left(\|u-z^*/\sqrt{n}\|^2\wedge\|u+z^*/\sqrt{n}\|^2\right)\leq \frac{16}{n^2}\fnorm{\wh{Z}-z^*z^{*\T}}^2,$$
by Davis-Kahan theorem \citep{davis1970rotation}.
Thus, we can derive the third conclusion from the second one. Finally, when $\sigma^2<(1-\epsilon)\frac{np}{2\log n}$, we know from (\ref{eq:bounded-by-ind}) and Lemma \ref{lem:stat-error-z2} that $\ell(\wh{V},z^*)=0$. Lemma \ref{lem:loss-relation} implies that $\fnorm{\wh{Z}-z^*z^{*\T}}^2=0$ and thus $\wh{Z}=z^*z^{*\T}$ is a rank-one matrix.
\end{proof}

\subsection{Proof of Theorem \ref{thm:lower-z2}}\label{sec:pf-lower}

Since $\ell(\wh{z},z)=2\left(1-\frac{1}{n}|\wh{z}^{\T}z|\right)$ and $n^{-2}\fnorm{\wh{z}\wh{z}^{\T}-zz^{\T}}^2=2\left(1-\frac{1}{n^2}|\wh{z}^{\T}z|^2\right)$, we have
$$n^{-2}\fnorm{\wh{z}\wh{z}^{\T}-zz^{\T}}^2=\ell(\wh{z},z)\left(1+\frac{1}{n}|\wh{z}^{\T}z|\right)\leq 2\ell(\wh{z},z),$$
and thus
\begin{eqnarray*}
\inf_{\wh{z}\in\{-1,1\}^n}\sup_{z\in\{-1,1\}^n}\mathbb{E}_z\ell(\wh{z},z) &\geq& \inf_{\wh{z}\in\{-1,1\}^n}\sup_{z\in\{-1,1\}^n}\mathbb{E}_z\frac{1}{2n^2}\fnorm{\wh{z}\wh{z}^{\T}-zz^{\T}}^2 \\
&\geq& \frac{1}{2}\inf_{\wh{Z}\in\mathbb{R}^{n\times n}}\sup_{z\in\{-1,1\}^n}\mathbb{E}_z\frac{1}{n^2}\fnorm{\wh{Z}-zz^{\T}}^2.
\end{eqnarray*}
It suffices to prove a lower bound for the loss $\fnorm{\wh{Z}-zz^{\T}}^2$. We lower bound the minimax risk by a Bayes risk
\begin{eqnarray*}
&& \inf_{\wh{Z}\in\mathbb{R}^{n\times n}}\sup_{z\in\{-1,1\}^n}\mathbb{E}_z\frac{1}{n^2}\fnorm{\wh{Z}-zz^{\T}}^2 \\
&\geq& \inf_{\wh{Z}\in\mathbb{R}^{n\times n}}\frac{1}{2^n}\sum_{z\in\{-1,1\}^n}\mathbb{E}_z\frac{1}{n^2}\fnorm{\wh{Z}-zz^{\T}}^2 \\
&\geq& \frac{1}{n^2}\sum_{1\leq j\neq k\leq n}\frac{1}{2^{n-2}}\sum_{z_{-(j,k)}\in\{-1,1\}^{n-2}}\inf_{\wh{T}}\frac{1}{4}\sum_{z_j\in\{-1,1\}}\sum_{z_k\in\{-1,1\}}\mathbb{E}_z|\wh{T}-z_jz_k|^2,
\end{eqnarray*}
where $z_{-(j,k)}$ is a sub-vector of $z$ by excluding the $j$th and the $k$th entries. For each $z_{-(j,k)}$, we have
\begin{eqnarray*}
&& \inf_{\wh{T}}\sum_{z_j\in\{-1,1\}}\sum_{z_k\in\{-1,1\}}\mathbb{E}_z|\wh{T}-z_jz_k|^2 \\
&\geq& \inf_{\wh{T}}\left(\mathbb{E}_{(z_{-(j,k)},z_j=1,z_k=-1)}|\wh{T}+1|^2 + \mathbb{E}_{(z_{-(j,k)},z_j=1,z_k=1)}|\wh{T}-1|^2\right) \\
&\geq& 2\int \diff\mathbb{P}_{(z_{-(j,k)},z_j=1,z_k=-1)}\wedge\diff\mathbb{P}_{(z_{-(j,k)},z_j=1,z_k=1)},
\end{eqnarray*}
where the last inequality is due to the classical Le Cam's two-point method. The total variation affinity characterizes the optimal testing error between two simple hypotheses of $z_k=-1$ versus $z_k=1$ with the values of all other parameters are known. By Neyman-Pearson lemma, we have
\begin{eqnarray*}
&& \int \diff\mathbb{P}_{(z_{-(j,k)},z_j=1,z_k=-1)}\wedge\diff\mathbb{P}_{(z_{-(j,k)},z_j=1,z_k=1)} \\
&\geq& \mathbb{P}_{(z_{-(j,k)},z_j=1,z_k=-1)}\left(\frac{\diff\mathbb{P}_{(z_{-(j,k)},z_j=1,z_k=1)}}{\diff\mathbb{P}_{(z_{-(j,k)},z_j=1,z_k=-1)}}>1\right) \\
&=& \mathbb{P}_{(z_{-(j,k)},z_j=1,z_k=-1)}\left(\sum_{j\in[n]\backslash\{k\}}z_jA_{jk}Y_{jk}>0\right) \\
&=& \mathbb{P}\left(\sigma\sum_{j\in[n]\backslash\{k\}}z_jA_{jk}W_{jk}>\sum_{j\in[n]\backslash\{k\}}A_{jk}\right).
\end{eqnarray*}
Let $\mathcal{A}$ be the collections of $A$'s that satisfy the conclusions of Lemma \ref{lem:ER-graph}, and we know that $\mathbb{P}(\mathcal{A})\geq 1-n^{-10}$. Let $\mathbb{P}_A$ be the shorthand of the conditional probability $\mathbb{P}(\cdot|A)$. For each $A\in\mathcal{A}$, a standard Gaussian tail bound implies
$$\mathbb{P}_A\left(\sigma\sum_{j\in[n]\backslash\{k\}}z_jA_{jk}W_{jk}>\sum_{j\in[n]\backslash\{k\}}A_{jk}\right)\geq \exp\left(-(1+\delta)\frac{np}{2\sigma^2}\right),$$
where $\delta=C\sqrt{\frac{\log n + \sigma^2}{np}} $ for some constant $C>0$. This implies
\begin{eqnarray*}
&& \mathbb{P}\left(\sigma\sum_{j\in[n]\backslash\{k\}}z_jA_{jk}W_{jk}>\sum_{j\in[n]\backslash\{k\}}A_{jk}\right) \\
&\geq& \inf_{A\in\mathcal{A}}\mathbb{P}_A\left(\sigma\sum_{j\in[n]\backslash\{k\}}z_jA_{jk}W_{jk}>\sum_{j\in[n]\backslash\{k\}}A_{jk}\right)\mathbb{P}(\mathcal{A}) \\
&\geq& \frac{1}{2}\exp\left(-(1+\delta)\frac{np}{2\sigma^2}\right).
\end{eqnarray*}
Therefore,
\begin{eqnarray*}
\inf_{\wh{z}\in\{-1,1\}^n}\sup_{z\in\{-1,1\}^n}\mathbb{E}_z\ell(\wh{z},z) &\geq& \frac{1}{2}\inf_{\wh{Z}\in\mathbb{R}^{n\times n}}\sup_{z\in\{-1,1\}^n}\mathbb{E}_z\frac{1}{n^2}\fnorm{\wh{Z}-zz^{\T}}^2 \\
&\geq& \frac{1}{16}\exp\left(-(1+\delta)\frac{np}{2\sigma^2}\right).
\end{eqnarray*}
By absorbing the constant $1/16$ into the exponent, the proof is complete.

\subsection{Proofs of Lemma \ref{lem:SDP-crude}, Lemma \ref{lem:SDP-crude-z2}, and Lemma \ref{lem:stat-error-z2}}\label{sec:pf-ini}

\begin{proof}[Proof of Lemma \ref{lem:SDP-crude}]
By the definition of $\wh{Z}=\wh{V}^{\H}\wh{V}$, we have $\Tr((A\circ Y)\wh{Z})\geq \Tr((A\circ Y)z^*z^{*\H})$. Rearranging this inequality, we obtain
\begin{equation}
\Tr(z^*z^{*\H}(z^*z^{*\H}-\wh{Z}))\leq \Tr\left((A\circ Y/p -z^*z^{*\H})(\wh{Z}-z^*z^{*\H})\right).\label{eq:SDP-1step-complex}
\end{equation}
The right hand side of (\ref{eq:SDP-1step-complex}) can be bounded by
\begin{eqnarray*}
&& \left|\Tr\left((A\circ Y/p -z^*z^{*\H})\wh{Z}\right)\right| + \left|\Tr\left((A\circ Y/p -z^*z^{*\H})z^*z^{*\H}\right)\right| \\
&\leq& \opnorm{A\circ Y/p -z^*z^{*\H}}\Tr(\wh{Z}) + \opnorm{A\circ Y/p -z^*z^{*\H}}\Tr(z^*z^{*\H}) \\
&=& 2n\opnorm{A\circ Y/p -z^*z^{*\H}} \\
&\leq& 2n\left(\frac{1}{p}\opnorm{(A-\mathbb{E}A)\circ z^*z^{*\H}} + \frac{\sigma}{p}\opnorm{A\circ W}\right).
\end{eqnarray*}
By Lemma \ref{lem:ER-graph},
\begin{eqnarray*}
\opnorm{(A-\mathbb{E}A)\circ z^*z^{*\H}} &=& \sup_{\|u\|=1}\left|\sum_{1\leq j\neq k \leq n}(A_{jk}-p)z_j^*\bar{z}^*_ku_j\bar{u}_k\right| \\
&\leq& \opnorm{A-\mathbb{E}A} \\
&\leq& C_1\sqrt{np},
\end{eqnarray*}
with probability at least $1-n^{-10}$.
By Lemma \ref{lem:bandeira}, $\opnorm{A\circ W}\leq C_2\sqrt{np}$ with probability at least $1-n^{-10}$. Thus, we have
$$\Tr(z^*z^{*\H}(z^*z^{*\H}-\wh{Z}))\leq C_3n\sqrt{\frac{(1+\sigma^2)n}{p}}.$$
Define $m=\frac{1}{n}\sum_{j=1}^n\wh{V}_j{z}_j^*$. By the inequality $\|x/\|x\|-y/\|y\|\|\leq 2\|x-y\|/\|x\|$, we have
\begin{eqnarray*}
\ell(\wh{V},z^*) &=& \min_{a\in\mathbb{C}^n:\|a\|^2=1}\frac{1}{n}\sum_{j=1}^n\|\wh{V}_j{z}_j^*-a\|^2 \\
&=& \min_{a\in\mathbb{C}^n\backslash\{0\}}\frac{1}{n}\sum_{j=1}^n\|\wh{V}_j{z}_j^*-a/\|a\|\|^2 \\
&\leq& \min_{a\in\mathbb{C}^n\backslash\{0\}}\frac{4}{n}\sum_{j=1}^n\|\wh{V}_j{z}_j^*-a\|^2 \\
&=& \frac{4}{n}\sum_{j=1}^n\|\wh{V}_j{z}_j^*-m\|^2 \\
&=& \frac{2}{n^2}\sum_{j=1}^n\sum_{l=1}^n\left(\|\wh{V}_j{z}_j^*-m\|^2+\|\wh{V}_l{z}_l^*-m\|^2\right) \\
&=& \frac{2}{n^2}\sum_{j=1}^n\sum_{l=1}^n\|\wh{V}_j{z}_j^*-\wh{V}_l{z}_l^*\|^2 \\
&=& \frac{4}{n^2}\sum_{j=1}^n\sum_{l=1}^n(1-\bar{z}_j^*{z}_l^*\wh{V}_j^{\H}\wh{V}_l) \\
&=& \frac{4}{n^2}\Tr(z^*z^{*\H}(z^*z^{*\H}-\wh{Z})).
\end{eqnarray*}
Therefore, we have $\ell(\wh{V},z^*)\leq 4C_3\sqrt{\frac{(1+\sigma^2)}{np}}$, and the proof is complete.
\end{proof}

\begin{proof}[Proof of Lemma \ref{lem:SDP-crude-z2}]
Following the same argument in the proof of Lemma \ref{lem:SDP-crude}, we have
$$\Tr(z^*z^{*\T}(z^*z^{*\T}-\wh{Z}))\leq Cn\sqrt{\frac{(1+\sigma^2)n}{p}},$$
with probability at least $1-n^{-9}$ and $\ell(\wh{V},z^*)\leq \frac{4}{n^2}\Tr(z^*z^{*\T}(z^*z^{*\T}-\wh{Z}))$. Then, we obtain the bound $\ell(\wh{V},z^*)\leq 4C\sqrt{\frac{(1+\sigma^2)}{np}}$, and the proof is complete.
\end{proof}

\begin{proof}[Proof of Lemma \ref{lem:stat-error-z2}]
Let $\mathcal{A}$ be the collections of $A$'s that satisfy the conclusions of Lemma \ref{lem:ER-graph}, and we know that $\mathbb{P}(\mathcal{A})\geq 1-n^{-10}$. Let $\mathbb{P}_A$ be the shorthand of the conditional probability $\mathbb{P}(\cdot|A)$. For each $A\in\mathcal{A}$, a standard Gaussian tail bound implies
\begin{eqnarray*}
\frac{8}{n}\sum_{j=1}^n\mathbb{P}_A\left(|U_j|>1-\delta\right) &\leq& \frac{16}{n}\sum_{j=1}^n\exp\left(-\frac{(1-\delta)^2(n-1)^2p^2}{2\sigma^2\sum_{k\in[n]\backslash\{j\}}A_{jk}}\right) \\
&\leq& \exp\left(-(1-\bar{\delta})\frac{np}{2\sigma^2}\right),
\end{eqnarray*}
where $\bar{\delta}=C\left(\delta+\sqrt{\frac{\log n}{np}}\right)$ for some constant $C>0$. Therefore,
\begin{eqnarray*}
&& \mathbb{P}\left(\frac{8}{n}\sum_{j=1}^n\mathbb{I}\{|U_j|>1-\delta\}>\exp\left(-\left(1-\bar{\delta}-\sqrt{\frac{2\sigma^2}{np}}\right)\frac{np}{2\sigma^2}\right)\right) \\
&\leq& \sup_{A\in\mathcal{A}}\mathbb{P}_A\left(\frac{8}{n}\sum_{j=1}^n\mathbb{I}\{|U_j|>1-\delta\}>\exp\left(-\left(1-\bar{\delta}-\sqrt{\frac{2\sigma^2}{np}}\right)\frac{np}{2\sigma^2}\right)\right) + \mathbb{P}(\mathcal{A}^c) \\
&\leq& \exp\left(-\sqrt{\frac{np}{\sigma^2}}\right) + n^{-10},
\end{eqnarray*}
by Markov's inequality. This immediately implies the first conclusion. For the second conclusion, it is easy to see that when $\left(1-\bar{\delta}-\sqrt{\frac{2\sigma^2}{np}}\right)\frac{np}{2\sigma^2}>\log n$, we have $\frac{8}{n}\sum_{j=1}^n\mathbb{I}\{|U_j|>1-\delta\}\leq \frac{1}{n}$, and thus the value of $\frac{8}{n}\sum_{j=1}^n\mathbb{I}\{|U_j|>1-\delta\}$ has to be $0$.
\end{proof}

\bibliographystyle{dcu}
\bibliography{reference}

\end{document}